\documentclass[11pt]{amsart}
\usepackage{amsmath,amscd,amssymb,amsthm,mathtools}
\usepackage[mathscr]{eucal}
\usepackage[margin=1in]{geometry}
\usepackage{booktabs}
\usepackage{longtable}
\usepackage{tabularx}
\usepackage{enumitem}
\usepackage{hyperref}

\numberwithin{equation}{section}

\newtheorem{theo}{Theorem}[section]
\newtheorem{coro}[theo]{Corollary}
\newtheorem{prop}[theo]{Proposition}
\newtheorem{lemm}[theo]{Lemma}
\newtheorem*{theoA}{Theorem A}
\newtheorem*{theoB}{Theorem B}
\newtheorem*{theoC}{Theorem C}
\newtheorem*{theoD}{Theorem D}
\newtheorem*{theoE}{Theorem E}
\theoremstyle{definition}
\newtheorem{defi}[theo]{Definition}
\newtheorem{exam}[theo]{Example}
\newtheorem{rema}[theo]{Remark}

\newcommand{\Mbar}{\overline{\mathcal M}}
\newcommand{\Q}{\mathbb Q}
\newcommand{\C}{\mathbb C}
\newcommand{\Z}{\mathbb Z}

\newcommand{\CC}{\mathbb{C}}
\newcommand{\PP}{\mathbb{P}}
\newcommand{\QQ}{\mathbb{Q}}
\newcommand{\ZZ}{\mathbb{Z}}
\newcommand{\V}{\mathbb V}
\newcommand{\bS}{\mathbf S}
\newcommand{\HH}{\mathbf{H}}
\newcommand{\Sbar}{\overline{\mathsf S}}
\newcommand{\Sym}{\operatorname{Sym}}
\newcommand{\tr}{\operatorname{tr}}
\newcommand{\str}{\operatorname{str}}
\newcommand{\ch}{\operatorname{ch}}
\newcommand{\Ind}{\operatorname{Ind}}
\newcommand{\rk}{\operatorname{rk}}
\newcommand{\bh}{\mathbf h}
\newcommand{\be}{\mathbf e}
\newcommand{\pl}{\circ}
\newcommand{\Lam}{\widehat\Lambda}
\newcommand{\sA}{\mathscr A}
\def\cA{{\mathcal A}}
\def\cC{{\mathcal C}}
\def\cE{{\mathcal E}}
\def\cF{{\mathcal F}}
\def\cG{{\mathcal G}}
\def\cH{{\mathcal H}}
\def\cL{{\mathcal L}}
\def\cM{{\mathcal M}}
\def\cT{{\mathcal T}}
\def\cU{{\mathcal U}}
\def\cX{{\mathcal X}}
\def\cY{{\mathcal Y}}
\def\bcM{\overline{\cM}}
\def\beq{\begin{equation}}
\def\eeq{\end{equation}}
\def\and{\quad{\rm and}\quad}
\def\lra{\longrightarrow }
\def\id{\mathrm{id}}
\def\Aut{\mathrm{Aut}}

\title[Hodge--Deligne polynomials and $\bS_n$-representations for 
$\overline{\mathcal M}_{2,n}$]{Hodge--Deligne polynomials and $\bS_n$-representations on the cohomology of $\overline{\mathcal M}_{2,n}$}

\author{Young-Hoon Kiem}
\address{School of Mathematics, Korea Institute for Advanced Study, 85 Hoegiro, Dongdaemun-gu, Seoul 02455, Korea}
\email{kiem@kias.re.kr}

\date{October 2026}

\begin{document}
\begin{abstract}
We compute the Hodge-Deligne polynomials of the moduli spaces $\overline{\mathcal M}_{2,n}$ of stable $n$-pointed curves of genus two, together with the action of the symmetric group $\bS_n$ on their cohomology, for all $n$. Using Hassett's moduli spaces of weighted pointed curves, we first show that the generating function of the $\bS_n$-equivariant Hodge-Deligne polynomials of $\overline{\mathcal M}_{g,n}$ is obtained from that of the moduli spaces of minimally weighted pointed curves by plethysm with the explicit symmetric function $Z=h_1-\sum_{k\ge3}(x+x^2+\dots+x^{k-2})h_k$; numerically, this is an explicit change of variables. The latter spaces map to $\overline{\mathcal M}_g$, and over each boundary stratum they are built from punctured universal curves and Losev-Manin spaces. Combining the stratification by stable graphs with Burnside's formula and twisted characteristics, we obtain in genus two a closed formula whose only non-elementary input is the cohomology of symplectic local systems on $\mathcal M_{1,1}$ and $\mathscr A_2$, known by work of Eichler-Shimura and Petersen. This determines the $\bS_n$-representations on all $H^{p,q}(\overline{\mathcal M}_{2,n})$ for $n\le 19$ and on all even degree $H^{p,q}(\overline{\mathcal M}_{2,n})$ for every $n$. We carry out the computation in all degrees for $n\le 18$, where it agrees with an independent computation by point counts, and in even degrees for $n\le 30$. As an application, we find that the even degree cohomology of $\overline{\mathcal M}_{2,n}$ is of Hodge-Tate type if and only if $n\le 20$: the space $H^{23,1}(\overline{\mathcal M}_{2,21})$ is the multiplicity free $\bS_{21}$-representation with Frobenius characteristic $s_1\cdot h_2[e_{10}]$, of dimension $1939938$, so that $H^{24}(\overline{\mathcal M}_{2,21})$ is not spanned by algebraic cycle classes.
\end{abstract}
\maketitle

\section{Introduction}\label{S1}

The rational cohomology of the moduli spaces $\bcM_{g,n}$ of stable $n$-pointed curves of genus $g$ has been studied intensively, yet complete answers are known only in very low genus. In genus zero, the cohomology is generated by boundary divisors and is spanned by algebraic cycle classes \cite{Keel}, the Poincar\'e polynomials of all $\bcM_{0,n}$ are encoded in a single generating function \cite[Theorem 4.2.1]{Manin}, and the $\bS_n$-equivariant Hodge-Deligne polynomials were computed by Getzler \cite{Get95a}. In genus one, the even degree cohomology is tautological \cite{Pet0}, the $\bS_n$-equivariant Hodge-Deligne polynomials are determined by Getzler's work \cite{Get98}, and the Hodge-Deligne polynomials of all $\bcM_{1,n}$ were computed in \cite{Ki} by means of the Hassett moduli spaces of weighted pointed curves \cite{Hassett}.

In genus two, the picture becomes considerably richer. Odd degree cohomology appears, the tautological ring of $\bcM_{2,n}$ fails to be Gorenstein \cite{PT}, and the even degree cohomology is tautological precisely for $n<20$ \cite{Pet2}: a non-tautological algebraic class on $\bcM_{2,20}$ was constructed by Graber and Pandharipande \cite{GrP}. The $\bS_n$-equivariant Euler characteristics of $\bcM_{2,n}$ were computed by counting points over finite fields \cite{Ber2, Ber}.

The purpose of this paper is to give closed formulas for the generating functions of the Hodge-Deligne polynomials of $\bcM_{2,n}$ and of their $\bS_n$-equivariant refinements, for all $n$ simultaneously, and to draw some consequences for the Hodge structures and the $\bS_n$-representations on $H^*(\bcM_{2,n})$. Throughout, $\HH_X(u,v)$ denotes the Hodge-Deligne polynomial of $X$ and $x=uv$ (see \S\ref{S2}).

Let $\Lam$ be the completed ring of symmetric functions with coefficients in $\QQ[u,v]$, with power sums $p_k$, complete symmetric functions $h_k$, elementary symmetric functions $e_k$ and Schur functions $s_\lambda$. Plethysm on $\Lam$ is normalized by $p_k\pl u=u^k$ and $p_k\pl v=v^k$ (see \S\ref{S2.2}). For $t\in\{1,-1,x\}$ we write
\[
\bh(t)=\sum_{n\ge0}t^nh_n=\exp\Big(\sum_{k\ge1}\frac{t^kp_k}{k}\Big),\qquad
\be(t)=\sum_{n\ge0}t^ne_n=\exp\Big(\sum_{k\ge1}(-1)^{k-1}\frac{t^kp_k}{k}\Big),
\]
so that $\bh(1)\be(-1)=1$. For a variety or Deligne-Mumford stack $X$ with an action of $\bS_n$, we let $\HH^{\bS}_X\in\Lam$ be its $\bS_n$-equivariant Hodge-Deligne polynomial, i.e.\ the Frobenius characteristic of the alternating sum of the graded pieces of $H^*_c(X)$, weighted by $u^pv^q$ (Definition \ref{SHD}). The \emph{rank map} $p_1\mapsto y$, $p_k\mapsto0$ for $k\ge 2$ sends $\HH^{\bS}_X$ to $\HH_X\,y^n/n!$.

\subsection{From maximal to minimal weights}
Let $\bcM_{g,(0^+)^n}$ be the Hassett moduli stack of $n$-pointed curves of genus $g$ in which all marked points have weight $0^+$, i.e.\ a positive rational number sufficiently close to $0$. A point of $\bcM_{g,(0^+)^n}$ is a stable curve of genus $g$ with $n$ smooth points which are allowed to coincide, possibly with rational bridges carrying marked points, but without rational tails. The two families of moduli spaces determine each other through a single explicit substitution.

\begin{theoA}[Proposition \ref{7}, Theorem \ref{A}]
For $g\ge 2$, let
\[\cH^{\bS}_g=\sum_{n\ge0}\HH^{\bS}_{\bcM_{g,n}}\and \cL^{\bS}_g=\sum_{n\ge0}\HH^{\bS}_{\bcM_{g,(0^+)^n}}.\]
Then $\cL^{\bS}_g=\cH^{\bS}_g\pl Z$, equivalently $\cH^{\bS}_g=\cL^{\bS}_g\pl Z^{\langle-1\rangle}$, where
\begin{equation}\label{defZ}
Z=\frac{x^2(\bh(1)-1)-\bh(x)+1}{x(x-1)}=h_1-\sum_{k\ge3}\frac{x(x^{k-2}-1)}{x-1}\,h_k
\end{equation}
and $Z^{\langle-1\rangle}$ is its plethystic inverse. Numerically, let
\[\cH_g(z)=\sum_n\HH_{\bcM_{g,n}}\frac{z^n}{n!}\and \cL_g(y)=\sum_n\HH_{\bcM_{g,(0^+)^n}}\frac{y^n}{n!}.\]
Then $\cH_g(z)=\cL_g(y)$ where
\[ z=\frac{x^2e^y-x^2-e^{xy}+1}{x(x-1)}.\]
\end{theoA}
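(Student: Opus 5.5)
The plan is to compare both moduli spaces with a common piece: the locus where the $0^+$-weighted points are pairwise distinct. Consider the reduction morphism $\rho:\bcM_{g,n}\to\bcM_{g,(0^+)^n}$ of \cite{Hassett}, which contracts every rational tail. For $g\ge2$ each rational tail carries at least two markings. A point of $\bcM_{g,(0^+)^n}$ determines a set partition of $\{1,\dots,n\}$ by coincidence of markings. For a partition with $m$ blocks of sizes $k_1,\dots,k_m$, the corresponding stratum of $\bcM_{g,(0^+)^n}$ is isomorphic to the locus $\cD_{g,m}\subset\bcM_{g,(0^+)^m}$ of curves with $m$ distinct markings.

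Over this stratum, the preimage under $\rho$ is $\cD_{g,m}\times\prod_{k_i\ge2}\bcM_{0,k_i+1}$. This holds because the rational tail replacing a block of size $k$ is an arbitrary stable $(k+1)$-pointed genus-zero curve glued at the distinguished point, and this gluing is a locally trivial product on the stratum. I would verify this via the universal property of Hassett spaces, exactly as in the genus one case of \cite{Ki}. These decompositions respect the $\bS_n$-actions, and Hodge--Deligne polynomials are additive and multiplicative. The standard composition formula for species (the Frobenius characteristic of induction from wreath products is plethysm) then gives
\[
\cH^{\bS}_g=\cD^{\bS}_g\pl G,\qquad \cL^{\bS}_g=\cD^{\bS}_g\pl(\bh(1)-1),
\]
where $\cD^{\bS}_g=\sum_m\HH^{\bS}_{\cD_{g,m}}$ and $G=h_1+\sum_{k\ge2}\HH^{\bS}_{\bcM_{0,k+1}}$. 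Here $\bcM_{0,k+1}$ carries the $\bS_k$-action permuting the first $k$ points. Since $G=h_1+{}$(higher order), it has a plethystic inverse. Eliminating $\cD^{\bS}_g$ gives
\[
\cL^{\bS}_g=\cH^{\bS}_g\pl\big(G^{\langle-1\rangle}\pl(\bh(1)-1)\big).
\]

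It then remains to prove $G^{\langle-1\rangle}\pl(\bh(1)-1)=Z$, i.e.\ $G\pl Z=\bh(1)-1$. This is the genus-zero input and the main obstacle. I would deduce it from Getzler's computation \cite{Get95a}: the stratification of $\bcM_{0,k+1}$ by rooted trees expresses $G$ through $\sum_k\HH^{\bS}_{\cM_{0,k+1}}$ by plethystic tree summation. The open parts are explicit: $\cM_{0,k+1}$ is a hyperplane complement, with $\HH^{\bS}$ given by Getzler's formula, and its $\bS_k$-character, after restriction from $\bS_{k+1}$, is the product $(1-x)(1-2x)\cdots$ up to twisting. Getzler's result is that $G$ is the plethystic inverse of $h_1-\sum_{k\ge2}\HH^{\bS}_{\cM_{0,k+1}}$-type expressions. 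I would check that composing with $\bh(1)-1$ yields precisely the coefficient $x+\dots+x^{k-2}$ on $h_k$.

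If this bookkeeping proves awkward, there is an alternative check. The identity $G\pl Z=\bh(1)-1$ is equivalent to a stratification statement for a genus-zero Hassett space of the form $\bcM_{0,1+(0^+)^k}$ with one heavy point, treated formally. Its classes are $\bh$-type, because the points move on $\mathbb A^1$ modulo translation and scaling. The same rational-tail argument as above then produces the identity directly, and the coefficients $\frac{x(x^{k-2}-1)}{x-1}$ come from the classes of these configuration spaces.

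Finally, the numerical statement follows by applying the rank map $p_1\mapsto y$, $p_k\mapsto0$ for $k\ge2$. This map is compatible with plethysm in the sense that $F\pl Z\mapsto F(\text{rank of }Z)$ for the exponential generating functions. Since $h_k\mapsto y^k/k!$, we get $\bh(1)\mapsto e^y$ and $\bh(x)\mapsto e^{xy}$. Hence $Z$ maps to $z=\frac{x^2e^y-x^2-e^{xy}+1}{x(x-1)}$, and therefore $\cL_g(y)=\cH_g(z)$.
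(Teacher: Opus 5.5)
Your reduction is the paper's own. The core/rational-tail stratification (Proposition \ref{tails}) writes $\cH^{\bS}_g$ and $\cL^{\bS}_g$ as composites of $\sum_m\HH^{\bS}_{\cM^\circ_{g,m}}$ with $G=\overline{\mathsf b}$ and with $\bh(1)-1$, respectively. The root-component recursion $G=h_1+\mathsf m\pl G$, where $\mathsf m=\sum_{k\ge2}\HH^{\bS}_{\cM_{0,k+1}}$, gives $G^{\langle-1\rangle}=h_1-\mathsf m$ (Lemma \ref{Zclosed}(a)). The rank map then yields the numerical statement, since $Z$ has no constant term.

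\textbf{The gap.} It is the step you yourself call the main obstacle. You never show $(h_1-\mathsf m)\pl(\bh(1)-1)=Z$, i.e.\ you never compute $\mathsf m\pl(\bh(1)-1)$, and this closed form is the entire content beyond the formal reduction.
\begin{itemize}
\item ``I would check'' gives no argument valid for all $k$.
\item The claim that the $\bS_k$-character of $\cM_{0,k+1}$ is a product of linear factors is false. Only the non-equivariant polynomial $(x-2)(x-3)\cdots(x-k+1)$ factors; for $k=3$ the character is $xh_3-s_{21}$.
\item Your alternative does not work as written. There is no $0^+$-weighted genus-zero space with a single heavy point. Moreover, running the rational-tail argument on rooted genus-zero curves only reproduces $G=h_1+\mathsf m\pl G$, which returns you to the same unknown $\mathsf m\pl(\bh(1)-1)$.
\end{itemize}

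\textbf{The missing input} is a geometric model for $\mathsf m\pl(\bh(1)-1)$ on which $\mathrm{Aff}_1$ acts freely.
\begin{itemize}
\item Its degree-$k$ part is the class of configurations of $k$ labelled points on $\mathbb A^1$, not all equal, modulo $\mathrm{Aff}_1$. Stratify by the partition of coincident points: the stratum with $j\ge2$ blocks is $F(\CC,j)/\mathrm{Aff}_1\cong\cM_{0,j+1}$, with the blocks as labels.
\item The whole quotient is $\PP(\CC^k/\CC)\cong\PP^{k-2}$. Since $\bS_k$ acts linearly, it acts trivially on cohomology.
\item Hence $\mathsf m\pl(\bh(1)-1)=\sum_{k\ge2}(1+x+\dots+x^{k-2})h_k$, and therefore $Z=(\bh(1)-1)-\mathsf m\pl(\bh(1)-1)=h_1-\sum_{k\ge3}(x+\dots+x^{k-2})h_k$, which is \eqref{defZ}.
\end{itemize}

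The paper reaches the same identity differently. The $\bS_k$-equivariant trivial $\mathrm{Aff}_1$-torsor $F(\CC,k)\to\cM_{0,k+1}$, for $k\ge2$, gives $\mathsf m=(\mathsf F-1-xh_1)/(x(x-1))$. Combined with $\mathsf F\pl(\bh(1)-1)=\bh(x)$, which comes from M\"obius inversion in Getzler's product formula (equivalently, from stratifying $\CC^k$ by coincidences), this gives $Z$.

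Your remark about points moving on $\mathbb A^1$ modulo translation and scaling points at exactly this mechanism. It has to be set up on the locus where $\mathrm{Aff}_1$ acts freely, not on a formal $0^+$-weighted space. That means nonconstant configurations, or $F(\CC,k)$ with $k\ge2$ plus the low-degree correction $1+xh_1$.
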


We give two proofs. The numerical statement follows from the factorization of $\bcM_{g,n}\to\bcM_{g,(0^+)^n}$ into explicit smooth blowups \cite{Ki} (Theorem \ref{9}). Since the blowup formula also holds for Chow groups, this also shows that the Hodge conjecture for $\bcM_{g,n}$ is equivalent to the Hodge conjecture for $\bcM_{g,(0^+)^n}$ (Proposition \ref{10}). The blowup centers are not preserved by $\bS_n$, and for the equivariant statement, we use instead the stratification by rational tails (Proposition \ref{tails}). This part is equivalent to the case $m=0$ of the change of variables of Kannan-Serpente-Yun \cite[Theorem A]{KSY}, which is stated in terms of the open moduli spaces $\cM_{0,k+1}$, and the new point is the closed form of $Z$, which follows from Getzler's formula for configuration spaces (Lemma \ref{Zclosed}). Remarkably, the same change of variables recovers Manin's generating function for the Poincar\'e polynomials of $\bcM_{0,n}$ (Remark \ref{Manin}).

\subsection{Minimally weighted pointed curves}
The advantage of $\bcM_{g,(0^+)^n}$ is that it comes with a forgetful morphism $\Phi_{g,n}:\bcM_{g,(0^+)^n}\to \bcM_g$ whose fibers are easy to describe. Over the stratum $\cM_\Gamma\subset\bcM_g$ of curves with dual graph $\Gamma$, the preimage is a quotient by $\Aut(\Gamma)$ of a disjoint union of products of fibered powers of punctured universal curves $\mathring\cC_v\to\cM_{g_v,d_v}$ and of Losev-Manin spaces $L_k$ \cite{LM} at the edges. For $\beta\in\bS_d$, let $A^{\bS,\beta}_{g,d}\in\Lam$ be the $\beta$-twisted characteristic of the fibered powers of $\mathring\cC_{g,d}=\cM_{g,d+1}\to\cM_{g,d}$, and let $C^{\bS}$ and $D^{\bS}$ be the untwisted and flip-twisted characteristics of the Losev-Manin spaces (Definition \ref{Adef}).

\begin{theoB}[Theorem \ref{graphsum}, Corollary \ref{24}, Proposition \ref{LM}]
For $g\ge2$, $\cL^{\bS}_g=\sum_{\Gamma}\cL^{\bS}_\Gamma$, the sum running over the stable graphs of genus $g$, where
\[
\cL^{\bS}_\Gamma=\frac1{|\Aut\Gamma|}\sum_{\alpha\in\Aut(\Gamma)}\ \prod_{O}p_{\ell(O)}\pl A^{\bS,\alpha^{\ell(O)}(v_O)}_{g_{v_O},d_{v_O}}\ \prod_{P}p_{\ell(P)}\pl B^{\alpha}_P,
\]
where $B^\alpha_P=C^{\bS}$ if $\alpha^{\ell(P)}$ preserves the ends of $e_P$ and $B^\alpha_P=D^{\bS}$ if $\alpha^{\ell(P)}$ flips $e_P$. Here the products run over the $\langle\alpha\rangle$-orbits $O$ of vertices and $P$ of edges, of lengths $\ell(O)$ and $\ell(P)$, with representatives $v_O\in O$ and $e_P\in P$, and $\alpha^{\ell}(v)$ is the permutation of the half-edges at $v$ induced by $\alpha^\ell$. Moreover
\[
C^{\bS}=\frac{x-1}{x-\bh(x)\be(-1)},\qquad D^{\bS}=\frac{\bh(x)\be(1)+x}{x+1}\cdot\big(p_2\pl C^{\bS}\big),
\]
whose images under the rank map are $C(y)=\dfrac{x-1}{x-e^{(x-1)y}}$ and $D(y)=\dfrac{e^{(x+1)y}+x}{x+1}$.
\end{theoB}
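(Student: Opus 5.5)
We prove Theorem B in three stages: a geometric stratification of $\bcM_{g,(0^+)^n}$ pulled back along $\Phi_{g,n}$, a twisted Burnside count for the $\Aut(\Gamma)$-quotients, and a direct computation of the Losev--Manin characteristics. The equivariant Hodge--Deligne polynomial is additive over locally closed $\bS_n$-stable stratifications. So we first write $\cL^{\bS}_g=\sum_\Gamma \HH^{\bS}_{\Phi^{-1}_{g,n}(\cM_\Gamma)}$, summed over $n$.

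\textbf{Description of the fiber over $\cM_\Gamma$.} We identify $\Phi^{-1}(\cM_\Gamma)$ with $\widetilde X_\Gamma/\Aut(\Gamma)$. Here $\widetilde X_\Gamma$ is the disjoint union, over all ways of distributing $\{1,\dots,n\}$ among vertices and edges, of a product of two kinds of factors. For each vertex $v$, the factor is the fibered power over $\cM_{g_v,d_v}$ of $\mathring\cC_v$, which records points on the smooth locus of the component $C_v$, away from the nodes. For each edge, the factor is a Losev--Manin space $L_k$, which records the chain of rational bridges inserted at the node carrying $k\ge1$ points. If $k=0$, the node is kept and the factor is a point.

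The $\bS_n$-action commutes with this description, so $\HH^{\bS}$ of the quotient is the $\Aut(\Gamma)$-invariant part. By the standard twisted-characteristic formalism (Burnside), this equals $\frac1{|\Aut\Gamma|}\sum_\alpha$ of the $\alpha$-twisted characteristic of $\widetilde X_\Gamma$. For fixed $\alpha$, the product over vertices and edges regroups into $\langle\alpha\rangle$-orbits. An orbit of length $\ell$ permuted cyclically contributes the plethysm $p_\ell\pl(\text{characteristic of one factor twisted by }\alpha^\ell)$; this is the usual lemma that the trace of a cyclic shift composed with $\beta$ on a tensor power is computed by $p_\ell$. At vertices, $\alpha^\ell$ acts on the half-edges at $v$, which gives $A^{\bS,\alpha^\ell(v)}_{g_v,d_v}$. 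At edges, $\alpha^\ell$ either fixes the two ends or flips them, which gives $C^{\bS}$ or $D^{\bS}$, the flip reversing the Losev--Manin chain.

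\textbf{Computing $C^{\bS}$ and $D^{\bS}$.} For $C^{\bS}$ we use the stratification of $L_k$ by chains of $r$ components. Each open stratum is a product of configuration spaces of points on $\mathbb C^*$ with coincidences allowed, modulo $\mathbb C^*$. Its characteristic is $(\bh(x)\be(-1)-1)/(x-1)$ per component, coming from $\Sym$ of $\mathbb C^*$ with a nonempty support. Summing the geometric series over chains, and including the empty term, gives $C^{\bS}=1/(1-(\bh(x)\be(-1)-1)/(x-1))=(x-1)/(x-\bh(x)\be(-1))$.

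For $D^{\bS}$, the flip reverses a chain. Only the middle component can be mapped to itself, so a flip-invariant chain consists of paired components plus possibly a central one. The paired part contributes $p_2\pl C^{\bS}$. The central component, with the flip acting as $t\mapsto t^{-1}$ on $\mathbb C^*$, has twisted characteristic computed by a point/Lefschetz count. Together with the option of having no central component (the node itself), this gives the factor $(\bh(x)\be(1)+x)/(x+1)$.

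Finally, the rank map kills $p_k$ for $k\ge2$ and sends $\bh(t)\mapsto e^{ty}$ and $\be(\pm1)\mapsto e^{\pm y}$, so $p_2\pl C^{\bS}\mapsto1$. This yields the stated $C(y)$ and $D(y)$.

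\textbf{Main obstacle.} The hardest part is the geometric identification of $\Phi^{-1}(\cM_\Gamma)$, up to the $\Aut(\Gamma)$-action, with the product of punctured fibered powers and Losev--Manin spaces. This requires showing that Hassett stability with weights $0^+$ forces exactly rational bridges (and no tails), and that these bridges, with coincident points and their $\mathbb C^*$-rescalings, are parametrized precisely by $L_k$. The other delicate step is the flip-twisted computation of the central component for $D^{\bS}$. For that we first check the identity numerically via point counts: the fixed points of $\mathrm{Frob}\circ\text{flip}$ on $\mathbb G_m$ number $q+1$, which explains the $x+1$ denominator. We then upgrade to the equivariant identity by the same stratification argument used for $C^{\bS}$.
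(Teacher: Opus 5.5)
Your proposal is correct and follows essentially the paper's proof. The common steps are:
\begin{itemize}
\item the identification of $\Phi_{g,n}^{-1}(\cM_\Gamma)$ with an $\Aut(\Gamma)$-quotient of a species product of punctured fibered powers and Losev--Manin spaces;
\item Burnside's formula combined with the cyclic-orbit plethysm lemma;
\item the chain-length stratification of $L_k$, giving $C^{\bS}=(1-\HH^{\bS}_Q)^{-1}$ and $D^{\bS}=(1+\HH^{\bS,\iota}_Q)\,(p_2\pl C^{\bS})$, where the paired components are untwisted because $\iota^2=\mathrm{id}$.
\end{itemize}
The only difference is cosmetic. You obtain the inversion trace $x+1$ on $\CC^*$ from a twisted point count, whereas the paper reads it off directly from $H^*_c(\CC^*)$ (Lemma~\ref{16}); the two agree because everything there is of Tate type.
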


Under the rank map, 
Theorem B reduces to a formula (Corollary \ref{24}) which expresses the generating function $\cL_g(y)$ in terms of the equivariant traces on the fibered powers of punctured universal curves. Thus the computation of $\HH^{\bS}_{\bcM_{g,n}}$, for every $g\ge 2$ and all $n$, is reduced to that of the twisted characteristics $A^{\bS,\beta}_{g',d}$ with $g'\le g$.

\subsection{Genus two}
For $g=2$, there are seven stable graphs, and all the remaining inputs can be computed. Let $V$ (resp.\ $\V$) be the local system $R^1q_*\QQ$ of the universal curve over $\cM_{1,1}$ (resp.\ $\cM_2$), and let $\mathbb S_\lambda$ denote the Schur functor. Let
\[
\cE^{\bS}=\sum_{|\lambda|\ \mathrm{even}}s_{\lambda'}\,\HH_{\cM_{1,1};\mathbb S_\lambda V},\qquad
\cF^{\bS}=\sum_{|\lambda|\ \mathrm{even}}s_{\lambda'}\,\HH_{\cM_{2};\mathbb S_\lambda \V},
\]
whose images under the rank map are $\cE(y)=\sum_{k}\HH_{\cM_{1,1};V^{\otimes k}}\frac{y^k}{k!}$ and $\cF(y)=\sum_k\HH_{\cM_{2};\V^{\otimes k}}\frac{y^k}{k!}$. Here $\lambda'$ denotes the conjugate (transpose) partition of $\lambda$. 
The series $\cE^{\bS}$ is determined by the Eichler-Shimura theory, i.e.\ by the dimensions of the spaces of elliptic cusp forms (Proposition \ref{40}), while $\cF^{\bS}$ is determined by Petersen's computation \cite{Pet} of the cohomology of symplectic local systems on $\sA_2$, i.e.\ by the dimensions of the spaces of elliptic cusp forms and of vector valued Siegel cusp forms for $\mathrm{Sp}_4(\ZZ)$, together with the branching from $\mathrm{GL}_4$ to $\mathrm{Sp}_4$ (Proposition \ref{EF}). We obtain an explicit formula for $\cL^{\bS}_2$ in terms of $\bh(t)$, $\be(t)$, $C^{\bS}$, $D^{\bS}$, $\cE^{\bS}$ and $\cF^{\bS}$ (Theorem \ref{L2S}). Its numerical form is as follows.

\begin{theoC}[Theorem \ref{L2S}, Corollary \ref{37}]
The generating function $\cL_2(y)$ is the sum of the following seven terms, which are the contributions of the seven strata of $\bcM_2$:
\begin{align*}
\cL_2={}&e^{(1+x)y}\mathcal F
+\tfrac12\Big[e^{2xy}\mathcal E^2C+x^2D\Big]
+\tfrac12\Big[e^{(x-1)y}\big(x\mathcal E+\mathcal E'\big)C
              +e^{(1+x)y}\big(x\mathcal E-\mathcal E'\big)D\Big]\\
&+\tfrac12\Big[e^{(2x-2)y}\mathcal EC^2+e^{2xy}\mathcal ECD\Big]\\
&+\tfrac18\Big[(x-2)e^{(x-3)y}C^2+2x\,e^{(x-1)y}CD
              +(x-2)e^{(x+1)y}D^2+(4x-4)e^{(x+1)y}\Big]\\
&+\tfrac1{12}\Big[e^{2(x-2)y}C^3+3e^{2xy}C+2e^{2(x+1)y}
              +D^3+3D+2\Big]\\
&+\tfrac18\Big[e^{2(x-2)y}C^3+2e^{(2x-2)y}C^2D
              +e^{2xy}CD^2+4D\Big].
\end{align*}
\end{theoC}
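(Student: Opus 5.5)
We specialize Theorem B to $g=2$ and pass to ranks. The first step is to enumerate the seven stable graphs of genus two and their automorphism groups:
\begin{itemize}
\item[(i)] one vertex of genus $2$ (trivial group);
\item[(ii)] two genus-one vertices joined by an edge ($\Aut=\ZZ/2$ swapping the vertices, and hence the two half-edges);
\item[(iii)] one genus-one vertex with a loop ($\ZZ/2$ flipping the loop);
\item[(iv)] a genus-one vertex joined by an edge to a genus-zero vertex carrying a loop ($\ZZ/2$ flipping the loop);
\item[(v)] two genus-zero vertices, each with a loop, joined by an edge ($\Aut$ of order $8$);
\item[(vi)] the theta graph, with two genus-zero vertices joined by three edges ($\Aut=\bS_3\times\ZZ/2$, of order $12$);
\item[(vii)] one genus-zero vertex with two loops ($\Aut$ of order $8$, generated by flipping each loop and swapping them).
\end{itemize}
These match the seven brackets in the statement, with the prefactors $1,\tfrac12,\tfrac12,\tfrac12,\tfrac18,\tfrac1{12},\tfrac18$.

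For each graph we write the Burnside sum of Theorem B over $\alpha\in\Aut\Gamma$ and apply the rank map $p_1\mapsto y$, $p_k\mapsto 0$ for $k\ge2$ (Corollary \ref{24}). Under this map any factor $p_\ell\pl(\cdot)$ with $\ell\ge2$ is killed. Hence only those $\alpha$ survive that fix every vertex and every edge setwise. For these $\alpha$, each edge factor is $C$ or $D$ according as $\alpha$ preserves or flips the edge. Each vertex factor is the numerical twisted trace $A^{\alpha(v)}_{g_v,d_v}(y)$, where $\alpha(v)$ is the induced permutation of the half-edges at $v$.

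The vertex inputs are the following.
\begin{itemize}
\item For $g_v=0$ we have $d_v=3$, and the fibered powers of $\mathring\cC_{0,3}=\cM_{0,4}\to\cM_{0,3}$ are configuration spaces of $\PP^1$ minus three points. Their (twisted) generating functions are exponentials. Untwisted, the factor is $e^{(x-2)y}$. A transposition of two special points gives $e^{xy}$, since the trace on $H^*_c(\PP^1\setminus 3\text{ pts})$ becomes $x+1-1$ with the appropriate sign conventions. A $3$-cycle gives $e^{(x+1)y}$. A stratum consisting of a point multiplies by $e^{(x-2)y}$ or $e^{(x-1)y}$ through the Euler-type exponent of each fiber.
\item For $g_v=1$, $d_v=1$, the Leray spectral sequence for the punctured elliptic curve turns the fibered powers into $e^{(x-1)y}$ or $e^{xy}$ times $\cE(y)$, the latter coming from $H^1=V$. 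The twisted (hyperelliptic-involution) variants produce $x\cE\pm\cE'$; here the derivative $\cE'$ arises from the $V$-twisted degree shift when a half-edge is identified with the marked point.
\item For $g_v=2$, the factor is $e^{(1+x)y}\cF$, by the same Leray argument on $\cM_{2,1}\to\cM_2$.
\end{itemize}

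The remaining work is bookkeeping for graphs (v)–(vii): listing, for each of the $8$ or $12$ automorphisms, whether it fixes all vertices and edges and which loops or edges it flips. For the theta graph, only the $\alpha$ fixing both vertices and all three edges survive. These come from the $\ZZ/2$ swapping the vertices, which flips all three edges, and from the identity. The other terms in the $\tfrac1{12}$ bracket must then come from collecting exponents of the vertex twisted traces, so the rank-map argument should be rechecked against Theorem \ref{L2S} term by term.

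I expect the main obstacle to be exactly this: correctly computing the twisted vertex factors, in particular the signs and the $\cE'$ terms for genus-one vertices under the involution, and reconciling apparent extra terms in the brackets. I would derive everything from the symmetric-function formula of Theorem \ref{L2S} and specialize at the end, rather than argue directly numerically.
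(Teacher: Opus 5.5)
\textbf{There is a genuine gap: your rank-map rule is false, and with it the argument cannot produce the stated formula.} Your overall route is the paper's: specialize Theorem~\ref{L2S} (equivalently Lemma~\ref{54}) under the rank map. But a factor $p_\ell\pl f$ with $\ell\ge2$ is \emph{not} killed. By Lemma~\ref{rankmap}, $\rk(p_\ell\pl f)=\psi^\ell(f(0))$, the Adams operation applied to the constant term of $f$.

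Geometrically, when $\alpha$ cyclically permutes $\ell\ge2$ vertices or edges, the decompositions with no light points on those factors are still fixed. The cyclic permutation of the factors $\cM_w$ then has trace $\psi^\ell\HH^{\alpha^\ell(v)}_{\cM_v}$, not $0$. This is the product over orbits of length $\ge2$ in Corollary~\ref{24}, and moved edges contribute the constant term $1$ of $C^{\bS}$ and $D^{\bS}$. So automorphisms moving vertices or edges do contribute, and your survival criterion loses the following terms:
\begin{itemize}
\item $x^2D$ in the $\Gamma_1$ bracket, from the vertex swap, since $\rk(p_2\pl A^{\bS}_{1,1})=\psi^2(x)=x^2$.
\item $(4x-4)e^{(x+1)y}$ in the one-vertex two-loop bracket. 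The four loop-exchanging automorphisms give $2(x-2)e^{(x+1)y}$ from $A^{\bS,(12)(34)}_{0,4}(p_2\pl C^{\bS})$ and $2xe^{(x+1)y}$ from $A^{\bS,(1234)}_{0,4}(p_2\pl D^{\bS})$.
\item Everything except $e^{2(x-2)y}C^3$ in the theta bracket.
\item $4D$ in the dumbbell bracket, from the four vertex-swapping automorphisms.
\end{itemize}

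Your guess that the extra theta terms come ``from collecting exponents of the vertex twisted traces'' is wrong. The term $3e^{2xy}C$ comes from the three transpositions in $\bS_3$: vertex factor $(A^{(12)}_{0,3})^2=e^{2xy}$, one preserved edge, and $p_2\pl C^{\bS}\mapsto1$. The term $2e^{2(x+1)y}$ comes from the two $3$-cycles, with $p_3\pl C^{\bS}\mapsto1$. The terms $D^3+3D+2$ come from the six vertex-swapping elements, where $p_2\pl A^{\bS,\beta}_{0,3}$, $p_2\pl C^{\bS}$ and $p_3\pl D^{\bS}$ all map to $1$. Your inclusion of the vertex swap for the theta graph already contradicts your own criterion. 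With the correct rule, substituting Propositions~\ref{Agen0} and~\ref{Agen1} into Lemma~\ref{54} and applying $\rk$ gives the statement term by term, which is the paper's proof.

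\textbf{The vertex inputs also need correcting.}
\begin{itemize}
\item Not every genus-zero vertex has $d_v=3$. The one-vertex two-loop graph has a $(0,4)$ vertex. By Proposition~\ref{Agen0}, $A^\beta_{0,4}=\HH^\beta_{\cM_{0,4}}e^{(x+1-f_1(\beta))y}$, with $\HH^\beta_{\cM_{0,4}}=x-2$ or $x$ according as $\beta$ maps to the identity or to a transposition in $\bS_3$. These factors are the source of $(x-2)e^{(x-3)y}$, $xe^{(x-1)y}$, $(x-2)e^{(x+1)y}$ and $xe^{(x+1)y}$.
\item The terms $x\cE\pm\cE'$ do not come from the $(1,1)$ vertex, which admits no twist. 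They come from the $(1,2)$ vertex of $\Gamma_0$. The transposition of its half-edges acts on $\cM_{1,2}$ by $(E,0,z)\mapsto(E,0,-z)$. It acts by $-1$ on the weight-zero class from the two punctures, turning $e^{(x-1)y}$ into $e^{(x+1)y}$. It also acts by $-1$ on $H^1_c(E\smallsetminus0)=V$ in the Leray spectral sequence of $\cM_{1,2}\to\cM_{1,1}$, turning $x\cE+\cE'$ into $x\cE-\cE'$.
\item The derivative itself comes from the Pieri rule $\mathbb S_\lambda V\otimes V=\bigoplus_{\mu=\lambda+\square}\mathbb S_\mu V$, which produces $\partial\cE^{\bS}$ (Proposition~\ref{Agen1}).
\item Your items (v) and (vii) are attached to the wrong $\tfrac18$ brackets: the fifth bracket is the one-vertex two-loop graph, and the seventh is the dumbbell.
\end{itemize}
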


Combining Theorems A, B and \ref{L2S}, we obtain $\HH^{\bS}_{\bcM_{2,n}}$ for every $n$ for which the relevant dimensions $s_{j,k}=\dim S_{j,k}(\mathrm{Sp}_4(\ZZ))$ of spaces of vector valued Siegel cusp forms are known. For $n\le 19$, only six of them are nonzero, and they are all known, so we obtain the $\bS_n$-representations on all $H^{p,q}(\bcM_{2,n})$ for $n\le 19$ unconditionally. Moreover, the numbers $s_{j,k}$ affect only odd degrees, so the $\bS_n$-representations on all even degree $H^{p,q}(\bcM_{2,n})$ are determined for every $n$ (Corollary \ref{62}). In \S\ref{S7}, we list the Betti numbers of $\bcM_{2,n}$ for $n\le 19$, the even degree Betti numbers for $20\le n\le 30$, and the $\bS_n$-representations for small $n$. We carried out the $\bS_n$-equivariant computation in all degrees for $n\le 18$ and in even degrees for $n\le 30$. For $n\le18$, it agrees, Schur function by Schur function and Hodge type by Hodge type, with the $\bS_n$-equivariant Euler characteristics computed in \cite{Ber} by counting points over finite fields.

\subsection{Non-Tate classes in even degree}
Theorem \ref{L2S} computes Hodge numbers, not only Betti numbers, and this extra information is not vacuous. By \cite{Pet2}, the even degree cohomology of $\bcM_{2,n}$ is tautological, hence spanned by algebraic classes of Tate type, for $n<20$. We find that the Hodge-Tate property persists for one more step and then fails.

\begin{theoD}[Theorem \ref{59}, Corollary \ref{61}]
For $n\le 20$, the Hodge numbers $h^{p,q}(\bcM_{2,n})$ with $p+q$ even vanish unless $p=q$. On the other hand, the only off-diagonal even degree Hodge pieces of $\bcM_{2,21}$ are $H^{23,1}$ and $H^{1,23}$, and as $\bS_{21}$-representations
\[
\ch_{21}\,H^{23,1}(\bcM_{2,21})=\ch_{21}\,H^{1,23}(\bcM_{2,21})=s_1\cdot h_2[e_{10}]
=\sum_{k=0}^{10}s_{2^k1^{21-2k}}+\sum_{a\in\{1,3,5,7,9\}}s_{3\,2^a1^{18-2a}},
\]
so that $h^{23,1}(\bcM_{2,21})=h^{1,23}(\bcM_{2,21})=1939938$. Consequently $H^{24}(\bcM_{2,21})$ is not spanned by algebraic cycle classes, and
\[RH^{24}(\bcM_{2,21})\subsetneq H^{24}_{\mathrm{alg}}(\bcM_{2,21})\subsetneq H^{24}(\bcM_{2,21}).\]
\end{theoD}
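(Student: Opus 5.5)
We derive Theorem D from the explicit formula for $\HH^{\bS}_{\bcM_{2,n}}$, obtained by combining Theorems A, B and C. The first point is that the even degree part of the answer is unconditional. By Proposition \ref{EF}, the only non-Tate inputs are the Hodge structures of $H^*_c(\cM_{1,1};\mathbb S_\lambda V)$ and $H^*_c(\cM_2;\mathbb S_\lambda\V)$. The Eichler--Shimura theory gives the first of these: classes of types $(k+1,0)$ and $(0,k+1)$ attached to cusp forms of weight $k+2$, sitting in odd total degree. Petersen's results describe the second. Because $\bcM_{2,n}$ is smooth and proper, $H^m$ is pure of weight $m$. We will therefore track the off-diagonal terms $u^pv^q$ with $p\neq q$ and $p+q$ even in the Hodge--Deligne polynomial, and these can come only from specific non-Tate pieces. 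In $\cE^{\bS}$, a product of two odd-weight cusp form contributions (from the $\cE^2$ term of the stratum of two elliptic components) yields even degree classes of type $(a+b,0)$ or $(a,b)$ with $a\ne b$. In $\cF^{\bS}$, the even-degree non-Tate classes come from Siegel cusp forms and from the Saito--Kurokawa/endoscopic parts of Petersen's description, which contain products of elliptic cusp form motives.

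The second step is to find the first $n$ at which such a piece survives with nonzero Euler characteristic in the graded pieces. Through Theorem A and the substitution $Z$, the number of marked points bounds the weight of the local systems that appear. The contribution of $\mathbb S_\lambda V$ requires $|\lambda|\le n$ (more precisely, the $\lambda$ that pair with $s_{\lambda'}$ in the fibered powers of size at most $n$). The relevant cusp forms are those of weight $12$, namely $\Delta$, entering through $V^{\otimes 10}$, and the product $\Delta\otimes\Delta$ is the natural candidate for a class of type $(22,0)$ shifted by Tate twists and boundary contributions. We would expand the seven terms of Theorem C coefficientwise in $y$ but keep the full $u,v$ dependence, and then check two things. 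First, for $n\le 20$ every even-degree monomial $u^pv^q$ with $p\ne q$ cancels, with purity guaranteeing that no cancellation happens between different cohomological degrees. Second, for $n=21$ the only survivors are $u^{23}v$ and $uv^{23}$. This is a finite symbolic computation once the inputs are truncated at weight $\le 21$. The concrete term producing $H^{23,1}$ should be the two-elliptic-component stratum $\tfrac12[e^{2xy}\cE^2C+\dots]$ applied to $\Delta$ on each side, with $10$ points on each elliptic tail and one extra point. This fits the shape $s_1\cdot h_2[e_{10}]$: $e_{10}$ is the sign-type characteristic of $\mathbb S_{(1^{10})'}$, $h_2[\,\cdot\,]$ accounts for swapping the two components, and $s_1$ is the extra point, lying on the bridge or node.

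The third step is to carry out the equivariant version: compute the $u^{23}v$ coefficient of $\HH^{\bS}_{\bcM_{2,21}}$ through the plethysm $\cL^{\bS}_2\pl Z^{\langle-1\rangle}$, check that it equals $s_1\cdot h_2[e_{10}]$, and then expand this with Pieri's rule and the known decomposition of $h_2[e_{10}]$ into $s_{2^k1^{20-2k}}$ with the appropriate parity. Evaluating dimensions by the hook length formula gives $1939938$, and the $(1,23)$ piece follows by Hodge symmetry. The corollary then follows directly. A nonzero $H^{23,1}$ inside $H^{24}$ cannot consist of algebraic classes, since these have type $(12,12)$, so $H^{24}_{\mathrm{alg}}\ne H^{24}$. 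The strict inclusion $RH^{24}\subsetneq H^{24}_{\mathrm{alg}}$ comes from pushing forward the Graber--Pandharipande non-tautological class on $\bcM_{2,20}$, or from Petersen's results for $n=20,21$.

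We expect the main obstacle to be the cancellation check for $n\le 20$ and the isolation of the survivors at $n=21$. This requires control of the Hodge types in Petersen's answer for $\cF^{\bS}$, in particular its endoscopic and Saito--Kurokawa terms of even degree, together with the vanishing of the relevant $s_{j,k}$ in low weight. We would try first to show that all of those terms land in odd degree, or in weight too large for $n\le21$, so that only the $\cE^2$ term matters.
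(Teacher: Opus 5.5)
Your overall route is the paper's. You evaluate Theorem~\ref{L2S} and Theorem~\ref{A} for $n\le21$, read Hodge numbers off $\HH$ by purity, and extract the $u^{23}v$ coefficient of $\cL^{\bS}_2\pl Z^{\langle-1\rangle}$ in degree $21$. But the structural analysis you propose for organizing, and shortcutting, that computation is wrong in a way that changes the answer. The even-degree off-diagonal terms of $\cF^{\bS}$ do not come from Petersen's Siegel, Saito--Kurokawa or endoscopic terms. Every off-diagonal monomial in \eqref{46} has odd total degree, since all the motives there have odd weight; this is exactly why the $s_{j,k}$ affect only odd degrees (Corollary~\ref{62}). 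The even-degree off-diagonal terms come instead from the Torelli correction $-\tfrac12\big((\cE^{\bS})^2+p_2\pl\cE^{\bS}\big)$ for $\sA_{1,1}$ in Proposition~\ref{EF}, which is quadratic in $\cE^{\bS}$ just like $\cL^{\bS}_{\Gamma_1}$. So both your reduction ``only the $\cE^2$ term matters'' and your identification of the $\Gamma_1$ stratum as the source of $H^{23,1}$ fail.

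Concretely, the coefficient of the monomial $u^{11}$ in $\cE^{\bS}$ is $-e_{10}$, with the sign because $\mathsf S_{12}$ sits in odd degree.
\begin{itemize}
\item In degree $20$, $\cL^{\bS}_{\Gamma_1}$ alone has $u^{22}$-coefficient $\tfrac12(e_{10}^2-p_2\pl e_{10})=e_2[e_{10}]$. Since $Z^{\langle-1\rangle}\equiv h_1$ modulo $x$, this would give $h^{22,0}(\bcM_{2,20})\neq0$, contradicting the statement. It is cancelled exactly by the term $-e_2[e_{10}]u^{22}$ of $\cL^{\bS}_\varnothing=\bh(1)\bh(x)\cF^{\bS}$.
\item In degree $21$, use $\bh(x)^2C^{\bS}=1+(1+2x)h_1+\cdots$, $D^{\bS}=1+h_1+\cdots$ and $\bh(1)\bh(x)=1+(1+x)h_1+\cdots$. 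The $u^{22}$-coefficients of the two strata cancel again. Their $u^{23}v$-coefficients add up to $\tfrac12h_1\big(e_{10}^2+p_2\pl e_{10}\big)=h_1h_2[e_{10}]$, whereas $\Gamma_1$ alone would give $h_1e_{10}^2$.
\end{itemize}
So $h_2[e_{10}]$ is not produced by swapping the two elliptic components inside $\Gamma_1$. It is the difference between $\Gamma_1$ and the $\sA_{1,1}$-part of the smooth stratum.

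The correct statement, as in the paper, is that off-diagonal even-degree monomials arise only from four quadratic terms: $(\cE^{\bS})^2$ and $p_2\pl(\bh(x)\cE^{\bS})$ in $\cL^{\bS}_{\Gamma_1}$, and $(\cE^{\bS})^2$ and $p_2\pl\cE^{\bS}$ in $\cF^{\bS}$. The paper then isolates $u^{23}v$ modulo $(u^{24},v^2)$, where $\cH^{\bS}_2\equiv\cL^{\bS}_2+xG\,\partial\cL^{\bS}_2$ with $G=\sum_{k\ge3}h_k$.

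Finally, for $RH^{24}\subsetneq H^{24}_{\mathrm{alg}}$ you must say along which map you push the Graber--Pandharipande class $\beta\in H^{22}(\bcM_{2,20})$. Pushing it along the gluing map $\iota:\bcM_{2,20}\to\bcM_{2,21}$ that attaches a rational tail carrying the points $20,21$ works. Indeed $\pi\circ\iota=\id$ for the map $\pi$ forgetting the last point, and $\pi_*$ preserves tautological classes, so $\iota_*\beta$ is algebraic and not tautological. The paper instead uses van Zelm's cycle $\overline{\mathcal B}_{2,1,20}$ directly. The appeal to Petersen does not by itself supply such a class.
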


Here the classes of type $(23,1)$ come from $\Sym^2\mathsf S_{12}(-1)$, where $\mathsf S_{12}$ is the motive of the discriminant cusp form $\Delta$ of weight $12$, and $H^{23,1}(\bcM_{2,21})\cong\Ind^{\bS_{21}}_{\bS_1\times(\bS_{10}\wr\bS_2)}\big(\mathbf 1\boxtimes(\mathrm{sgn}_{10}\boxtimes\mathrm{sgn}_{10})\big)$. The first strict inclusion is due to the non-tautological bielliptic cycles of van Zelm \cite{vZ}. Since the forgetful morphisms are surjective, the pullbacks $H^{24}(\bcM_{2,21})\to H^{24}(\bcM_{2,n})$ are injective for $n\ge 21$, so the even degree cohomology of $\bcM_{2,n}$ is of Hodge-Tate type if and only if $n\le 20$. Further non-Tate classes appear for larger $n$. For instance, $H^{1,27}(\bcM_{2,25})\cong H^{12,16}(\bcM_{2,25})$ has Frobenius characteristic $s_1e_{10}e_{14}$ and comes from $\mathsf S_{12}\otimes\mathsf S_{16}(-1)$ (Proposition \ref{n25}). Finally, since Theorem \ref{L2S} determines all $\bS_n$-representations $H^{p,p}(\bcM_{2,n})$, verifying the Hodge conjecture for $\bcM_{2,n}$ amounts to producing enough algebraic cycles, whose number and symmetry type are now explicitly known.

\subsection{Odd degree cohomology}
The computation also describes the odd degree cohomology as a representation of $\bS_n$.

\begin{theoE}[\S\ref{S7.4}]
The first nonzero odd degree cohomology group of $\bcM_{2,n}$ is $H^{13}(\bcM_{2,10})=H^{12,1}\oplus H^{1,12}$, and each of the two summands is the sign representation of $\bS_{10}$. For $n=11$, $H^{12,1}(\bcM_{2,11})\cong(V_{21^9}\oplus V_{1^{11}})^{\oplus2}$. The first classes of type $(p,0)$ appear in $H^{17}(\bcM_{2,14})$, where $H^{17,0}(\bcM_{2,14})\cong V_{2^7}$, of dimension $429$, comes from the Saito-Kurokawa lift of the cusp form of weight $18$.
\end{theoE}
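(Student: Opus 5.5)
The statement ends with Theorem E, so the plan is for Theorem E. It is an output of the whole machinery: Theorems A, B and C, with $\cE^{\bS}$ and $\cF^{\bS}$ supplied by Eichler--Shimura and Petersen.

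\emph{Setup.} Compute $\HH^{\bS}_{\bcM_{2,n}}$ for $n\le 14$ in the following order.
\begin{enumerate}
\item Take the closed formula for $\cL^{\bS}_2$ from Theorem \ref{L2S}.
\item Substitute $C^{\bS}$ and $D^{\bS}$ from Proposition \ref{LM}.
\item Substitute $\cE^{\bS}$ and $\cF^{\bS}$ from Propositions \ref{40} and \ref{EF}.
\item Plethystically compose with $Z^{\langle-1\rangle}$ (Theorem A), truncated at degree $14$.
\end{enumerate}
Since $\bcM_{2,n}$ is smooth and proper, its cohomology is pure. Hence the coefficient of $u^pv^q$ in degree $n$ is $(-1)^{p+q}\ch_n H^{p,q}(\bcM_{2,n})$, and no cancellation can occur between classes of different Hodge type. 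So it suffices to isolate the monomials $u^pv^q$ with $p\ne q$.

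\emph{Where off-diagonal terms come from.} They arise only from the cuspidal parts of the local system cohomology.
\begin{itemize}
\item From $\cE^{\bS}$: the classes $\mathsf S_{k+2}$ contributed by $H^1_c(\cM_{1,1};\Sym^k V)$. The first cusp form is $\Delta$, in weight $12$, so this needs $k=10$, with Hodge types $(11,0)$ and $(0,11)$.
\item From $\cF^{\bS}$: the endoscopic, Saito--Kurokawa and genuine Siegel contributions in Petersen's tables.
\end{itemize}

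\emph{Bookkeeping.} A term $s_{\lambda'}\HH_{\cM_{1,1};\mathbb S_\lambda V}$ with $|\lambda|=10$ enters $\cL_2$ through the terms of Theorem C that contain $\cE$. In the elliptic-tail stratum the attached genus one curve carries at least $10$ points, which gives $n\ge 10$.

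\emph{Degree $10$.} Only $\lambda=(10)$ contributes to $H^1$, because $\mathbb S_\lambda V$ vanishes unless $\lambda$ has at most two rows. The conjugate $\lambda'=(1^{10})$ gives the sign representation. A Tate twist by the boundary divisor class (codimension one, from the node) turns $(11,0)$ into $(12,1)$ in degree $13$. Two further checks are needed:
\begin{itemize}
\item the $\cF$ contributions start only later, the first one being $s_{j,k}$ with Saito--Kurokawa weight $18$;
\item the remaining $\cE$ terms with $\lambda=(9,1)$ etc.\ vanish in weight $12$, since $\mathbb S_{(9,1)}V\cong\Sym^8V(-1)$ and $S_{10}=0$.
\end{itemize}

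\emph{Degree $11$.} Extract the coefficient. The new contributions are:
\begin{itemize}
\item the composition with $Z^{\langle-1\rangle}$, i.e.\ the $h_1$ factor adding a rational tail, which produces $s_1e_{10}=s_{21^9}+s_{1^{11}}$;
\item the $e^{(x\pm1)y}$ factors, which place a point on the other component;
\item the $\mathcal E'$-term from the self-node graph, which gives the second copy.
\end{itemize}
I expect multiplicity $2$ overall after summing.

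\emph{Degree $14$ and vanishing below.} For the $(p,0)$ claim, show that no $u^pv^0$ term with $p\ge 1$ appears for $n<14$. The $\cE$-contributions always carry a boundary twist (a factor $x$), so they never have $q=0$. Thus only $\cF$ on the open stratum $\cM_2$, with factor $e^{(1+x)y}$ and $q=0$ forced, can contribute. By Petersen, the first $\mathbb S_\lambda\V$ with $H^{p,0}$ is $\lambda=(7,7)$, from the Saito--Kurokawa lift of the weight $18$ form. It gives $H^{17,0}$ in $H^3_c(\cM_2;\mathbb S_{7,7}\V)$ plus the fibers, landing in degree $17$. This is $s_{\lambda'}=s_{2^7}$, of dimension equal to the Catalan number $429$.

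\emph{Main obstacle.} The main obstacle is the bookkeeping of the plethystic inverse $Z^{\langle-1\rangle}$ and of the twisted ($p_2$) terms at $n=11$, so that the multiplicity $2$ comes out right. It also requires confirming that the $\cE'$ and $D$-twisted contributions do not cancel. I would settle this by explicit symbolic expansion, and check it against Bergstr\"om's point counts.
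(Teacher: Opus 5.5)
Your computational backbone is the paper's own proof: there, Theorem E is read off from the explicit expansion of $\cL^{\bS}_2\pl Z^{\langle-1\rangle}$ (Theorems \ref{L2S} and \ref{A}), using purity. The $(17,0)$ classes are traced to the term $s_{18}(u^{17}+v^{17})$ of $\HH_{\Sbar_{0,10}}$, i.e.\ to $\V_{7,7}\subset\mathbb S_{(7,7)}\V$, giving $s_{2^7}$, exactly as you say.

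The gap is in the structural shortcuts you use instead of that expansion, both for the ``first'' statements and for the degree-$10$ checks. They rest on two false claims.
(i) $\cF^{\bS}$ is not Tate below the Saito--Kurokawa weight. By \eqref{46}, $\HH_{\sA_2;\V_{9,1}}=-(u^{11}+v^{11})$; this comes from the summand $\mathsf S_{a+3}$ of $H^3_c$ in Theorem \ref{42}. Since $\V_{9,1}$ occurs untwisted in $\mathbb S_{(9,1)}\V$, $\cF^{\bS}$ contains $-s_{21^8}(u^{11}+v^{11})$ in degree $10$. In the same degree, the $\sA_{1,1}$-correction $-\frac12((\cE^{\bS})^2+p_2\pl\cE^{\bS})$ contributes $+e_{10}(u^{12}v+uv^{12})$. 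Similarly, $\V_{13,1}$ gives $\mathsf S_{16}$ of type $(15,0)$ in degree $14$. So both ``the $\cF$ contributions start only later'' and ``the first $\mathbb S_\lambda\V$ with an $H^{p,0}$ is $\lambda=(7,7)$'' are wrong.
(ii) Not every $\cE$-contribution carries a factor $x$: the $\cE^{\bS}$-term of $\cL^{\bS}_{\Gamma_a}$ and the $\partial\cE^{\bS}$-term of $\cL^{\bS}_{\Gamma_0}$ carry none.

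Consequently, the absence of $(p,0)$ classes for $n<14$ is a cancellation between strata, not an absence of sources. In degree $10$, the coefficient of $u^{11}$ in $\cL^{\bS}_2$ is $-s_{21^8}$ from $\cL^{\bS}_\varnothing$, plus $e_1e_9=s_{21^8}+s_{1^{10}}$ from $\cL^{\bS}_{\Gamma_0}$, minus $e_{10}$ from $\cL^{\bS}_{\Gamma_a}$, for a total of $0$. The coefficient of $u^{12}v$ is $e_{10}-e_{10}-e_{10}=-e_{10}$, coming from $\cL^{\bS}_\varnothing$, $\cL^{\bS}_{\Gamma_1}$ and $\cL^{\bS}_{\Gamma_0}$ respectively. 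This is what yields $H^{12,1}(\bcM_{2,10})\cong\mathrm{sgn}$. Dropping the $\cF$-terms, as your check suggests, would leave $+s_{21^8}u^{11}$, which purity forbids, and $-2e_{10}u^{12}v$, which is the wrong multiplicity.

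At $n=11$, the plethysm with $Z^{\langle-1\rangle}$ adds nothing off-diagonal: $Z^{\langle-1\rangle}-h_1$ starts in degree $3$, and $\cL^{\bS}_2$ has no off-diagonal terms below degree $10$. The multiplicity $2$ arises as $-3h_1e_{10}-h_1s_{21^8}+h_2e_9+e_2e_9=-2(s_{21^9}+s_{1^{11}})$, with the open stratum contributing $h_1e_{10}-h_1s_{21^8}$.

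So your ``Degree $14$ and vanishing below'' argument does not prove that the first $(p,0)$ classes occur at $n=14$. The same goes for the vanishing of $H^{11}(\bcM_{2,10})$ implicit in ``first nonzero odd degree group''. Both must come, as in the paper, from the complete expansion of all seven strata and the plethysm, Hodge type by Hodge type, for each $n\le 14$. In your plan that step appears only as the final fallback.
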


\subsection*{Layout}
In \S\ref{S2}, we collect basic facts on Hodge-Deligne polynomials, equivariant traces, symmetric functions and twisted characteristics. In \S\ref{S3}, we recall the blowup description of \cite{Ki} and prove Theorem A and the comparison of the Hodge conjectures. In \S\ref{S4}, we stratify $\bcM_{g,(0^+)^n}$ by stable graphs, prove Theorem B and compute the contributions of the Losev-Manin spaces. In \S\ref{S5}, we compute the genus two inputs: the cohomology of local systems and the twisted characteristics of punctured universal curves. In \S\ref{S6}, we assemble these into the genus two formula and prove Theorem C. In \S\ref{S7}, we present the numerical results and prove Theorems D and E.

\subsection*{Conventions}
All the schemes or stacks in this paper are defined over $\CC$. All the cohomology groups are defined over $\QQ$ unless stated otherwise. Graded objects carry the Koszul sign rule, so that $\HH^{\bS}$ is a homomorphism of $\lambda$-rings from the Grothendieck ring of $\bS$-spaces (see \S\ref{S2.2}).

\subsection*{Acknowledgement} It is my pleasure to thank Jonas Bergstr\"om, Jinwon Choi and Han-Bom Moon for useful discussions and comments.

\bigskip
\section{Preliminaries}\label{S2}

\subsection{Hodge-Deligne polynomials and Burnside's formula}\label{S2.1}
For a Deligne-Mumford stack $X$ with quasi-projective coarse moduli space, let $\HH_X$ denote the \emph{Hodge-Deligne polynomial}
$$\HH_X(u,v)=\sum_k(-1)^k\sum_{p,q}u^pv^q\dim H^{p,q;k}_c(X), \quad H_c^{p,q;k}(X)=\mathrm{gr}^p_F\mathrm{gr}^W_{p+q}H^k_c(X)$$
where $\mathrm{gr}_F$ and $\mathrm{gr}^W$ denote the graded parts with respect to the Hodge and weight filtrations respectively. The Hodge-Deligne polynomial is \emph{motivic} in the sense that
\begin{enumerate}
    \item $\HH_X=\HH_{X-Z}+\HH_Z$ for closed $Z\subset X$;
    \item $\HH_{X\times Y}=\HH_X\cdot \HH_Y.$
\end{enumerate}
It moreover satisfies the blowup formula
$$\HH_X=\HH_Y+\HH_Z\cdot (\HH_{\PP^{c-1}}-1)=\HH_Y+\HH_Z\cdot\frac{x^c-x}{x-1}$$
when $X\to Y$ is the blowup of a smooth $Y$ along a smooth center $Z$ of codimension $c\ge 2$.
The \emph{Poincar\'e polynomial} of $X$ is
$$P_X(t)=\HH_X(-t,-t)\in \ZZ[t].$$
When $X$ is smooth and proper of dimension $d$, the Poincar\'e polynomial and the even degree Poincar\'e polynomial may be defined as
\beq\label{12}
P_X(t)=\sum_i t^i\dim H^i(X),\quad P^{\mathrm{even}}_X(x)=\sum_k x^k\dim H^{2k}(X).\eeq
In this case, the Poincar\'e duality reads
$$\HH_X(u,v)=(uv)^d\HH_X(u^{-1},v^{-1}),\quad P_X(t)=t^{2d}P_X(t^{-1}), \quad P_X^{\mathrm{even}}(x)=x^dP_X^{\mathrm{even}}(x^{-1}).$$
We use $x=uv$ throughout and a class of Tate type $(m,m)$ contributes $x^m$.

\begin{defi}\label{13}
For a finite group $G$ acting on $X$ and $g\in G$, we write
\[
\HH^g_X=\sum_k(-1)^k\sum_{p,q}u^pv^q\tr\big(g^*|_{H^{p,q;k}_c(X)}\big)
\]
for the \emph{equivariant trace}, a polynomial in $u,v$
by functoriality of the mixed Hodge structure.
For a local system $V$ on $X$ which is a polarizable variation of Hodge structure, we write
$$\HH_{X;V}(u,v)=\sum_k(-1)^{k}\sum_{p,q}u^pv^q\dim H^{p,q;k}_c(X;V),$$
and similarly $\HH^g_{X;V}$ for a $G$-equivariant $V$. By additivity, $\HH_{X;V}$ and $\HH^g_{X;V}$ make sense for any class $V$ in the Grothendieck group of (equivariant) variations of mixed Hodge structure, where a shift $[-1]$ contributes a sign.
\end{defi}

Since we use $\Q$-coefficients, we have $H^{p,q;k}_c(X/G)=H^{p,q;k}_c(X)^G$ and by taking the trace of the projection $$\frac1{|G|}\sum_{g\in G}g^*:H^{p,q;k}_c(X)\lra H^{p,q;k}_c(X)^G\subset H^{p,q;k}_c(X)$$ to the invariant part (cf. \cite[Proposition 2.8]{FuHa}), we obtain Burnside's formula
\beq\label{14}
\HH_{X/G}=\frac1{|G|}\sum_{g\in G}\HH^g_X
\eeq
which we will use repeatedly below.
We will also repeatedly use the following two elementary facts.
\begin{lemm}\label{15}
Let $X=\bigsqcup_\alpha X_\alpha$ be a finite stratification
which is preserved by $G$ up to permutation,
i.e.\ $g\in G$ permutes the $X_\alpha$.  Then
$\HH^g_X=\sum_{\alpha:\,gX_\alpha=X_\alpha}\HH^g_{X_\alpha}$.
\end{lemm}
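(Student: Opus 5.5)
We prove Lemma \ref{15} by reducing to two cases: a $G$-stable closed subset, and a $G$-orbit of strata permuted without fixed points.

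\emph{Step 1: reduction to a filtration by closed $g$-stable unions.} Replace $G$ by the cyclic group $\langle g\rangle$; only the action of $g$ on $H^*_c(X)$ enters $\HH^g_X$. Since the stratification is finite, we may order the strata so that each union $X_{\le i}=\bigsqcup_{\beta\le i}X_\beta$ is closed. To keep these unions $g$-stable, group the strata into $\langle g\rangle$-orbits and order orbits compatibly with closure. The cleanest way is to use the closure partial order on strata, which $g$ preserves, and pass to its level sets. If this is unavailable, we only need that some filtration by $g$-stable closed unions exists. Concretely, take $F_j$ to be the union of strata of dimension $\le j$. This is closed whenever the stratification is ``good'' in the sense used throughout the paper, e.g.\ strata of stable graphs, and it is $g$-stable because $g$ preserves dimension.

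\emph{Step 2: additivity for $g$-equivariant traces.} For a $g$-stable closed $Z\subset X$ with complement $U$, the long exact sequence
\[\cdots\to H^k_c(U)\to H^k_c(X)\to H^k_c(Z)\to\cdots\]
is a sequence of mixed Hodge structures, and $g^*$ acts compatibly on it. The functors $\mathrm{gr}_F\mathrm{gr}^W$ are exact on mixed Hodge structures, so the graded pieces also form exact sequences with a compatible $g$-action. The alternating sum of traces on an exact sequence is zero, which gives $\HH^g_X=\HH^g_U+\HH^g_Z$. Applying this along the filtration of Step 1, $\HH^g_X$ becomes a sum over the layers, each layer being a disjoint union of $g$-orbits of strata.

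\emph{Step 3: contribution of each orbit.} Let $Y=X_{\alpha_1}\sqcup\dots\sqcup X_{\alpha_m}$ be a $\langle g\rangle$-orbit of strata inside a layer, so that $g X_{\alpha_i}=X_{\alpha_{i+1}}$ with indices mod $m$. Within a layer the strata are open and closed in it, so
\[H^*_c(Y)=\bigoplus_i H^*_c(X_{\alpha_i}),\]
and this decomposition is compatible with the Hodge filtrations. Under it, $g^*$ maps the summand of $X_{\alpha_{i+1}}$ to that of $X_{\alpha_i}$. If $m>1$, then $g^*$ permutes the summands with no fixed summand, so its matrix has zero diagonal blocks and its trace on every $H^{p,q;k}_c$ vanishes. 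If $m=1$, the contribution is $\HH^g_{X_\alpha}$. Summing over all orbits gives the formula of Lemma \ref{15}.

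The main obstacle is Step 1. We need a filtration that is simultaneously by closed unions and $g$-stable, which requires some condition beyond a bare decomposition into locally closed pieces. I expect to handle it with the dimension filtration, or with the closure order, and to state explicitly the mild ``the closure of a stratum is a union of strata'' hypothesis that all stratifications in the paper satisfy. The remaining steps are formal consequences of exactness of $\mathrm{gr}_F\mathrm{gr}^W$ and linear algebra.
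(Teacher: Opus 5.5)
Your proof is correct and is essentially the paper's argument. You pass to $\langle g\rangle$, use additivity of $\HH^g$ for $g$-stable closed subsets via the long exact sequence of mixed Hodge structures, and note that an orbit of $m>1$ strata that are open and closed in their layer is permuted cyclically, so $g^*$ has zero diagonal blocks. The frontier-type hypothesis you state in Step 1 is the one the paper assumes tacitly when it orders ``the strata compatibly with their closures''. It can in fact be dropped, by Noetherian induction on the dense $g$-stable open set formed by the interiors of the strata.
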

\begin{proof}
Let $H=\langle g\rangle$. The equivariant trace is additive for $H$-stable closed subsets, since the long exact sequence of compactly supported cohomology for $Z\subset X$ is $H$-equivariant and compatible with the mixed Hodge structures. Ordering the strata compatibly with their closures, we may therefore assume that $X$ is the union of a single $H$-orbit $X_{\alpha}, gX_\alpha,\dots,g^{m-1}X_\alpha$ of strata, which are open and closed in $X$ since none of them meets the closure of another. Then
$H^*_c(X)=\bigoplus_{i=0}^{m-1}H^*_c(g^iX_\alpha)$ and $g$ permutes the summands cyclically.
If $m>1$, the matrix of $g^*$ has zero diagonal blocks, so its trace vanishes. Only strata with
$gX_\alpha=X_\alpha$ survive.
\end{proof}

\begin{lemm}\label{16}
Let $\iota$ be the inversion $z\mapsto z^{-1}$ on $\C^*$.  Then $\HH^\iota_{\C^*}=x+1$,
and for the diagonal inversion on $(\C^*)^m$ one has $\HH^\iota_{(\C^*)^m}=(x+1)^m$.
\end{lemm}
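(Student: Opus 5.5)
We compute the compactly supported cohomology of $\C^*$ with its mixed Hodge structure and read off how $\iota^*$ acts on each piece. Then we deduce the statement for $(\C^*)^m$ from a Künneth argument.

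For $\C^*$, we have $H^2_c(\C^*)=\QQ(-1)$, which is pure of type $(1,1)$, and $H^1_c(\C^*)=\QQ(0)$, of type $(0,0)$. All other groups vanish, so $\HH_{\C^*}=x-1$.

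\begin{itemize}
\item $H^2_c$ is spanned by the fundamental class of the oriented real surface $\C^*$. The map $\iota$ is a biholomorphism, so it preserves the complex orientation and acts on $H^2_c$ by $+1$.
\item By Poincar\'e duality, $H^1_c(\C^*)$ is dual to $H^1(\C^*)$, which is spanned by the class of $dz/z$. Since $\iota^*(dz/z)=-dz/z$, the map $\iota^*$ acts on $H^1$ by $-1$. The duality pairing $H^1_c\otimes H^1\to H^2_c$ is $\iota$-equivariant and $\iota$ acts on $H^2_c$ trivially, so $\iota^*$ also acts by $-1$ on $H^1_c$.
\end{itemize}

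As a cross-check, use the long exact sequence for $\C^*\subset\PP^1$ with complement $\{0,\infty\}$. The map $\iota$ swaps the two points, so it acts on $H^0(\{0,\infty\})=\QQ^2$ with invariant part $\QQ$ (trace $0$) and on $H^0(\PP^1)$ by $+1$. Hence it acts on the cokernel $H^1_c(\C^*)$ by $-1$.

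Putting these together, the equivariant trace is
\[\HH^\iota_{\C^*}=(-1)^1\cdot(-1)\cdot 1+(-1)^2\cdot(+1)\cdot x=x+1.\]

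For $(\C^*)^m$ with the diagonal inversion, we use the Künneth isomorphism $H^*_c((\C^*)^m)\cong H^*_c(\C^*)^{\otimes m}$. It is compatible with mixed Hodge structures and with the product automorphism $\iota\times\dots\times\iota$.

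\begin{itemize}
\item The trace of a tensor product of graded maps is the product of the traces.
\item Under the Koszul rule the signs $(-1)^k$ are multiplicative.
\item The bigrading monomials $u^pv^q$ are multiplicative.
\end{itemize}

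Therefore $\HH^\iota_{(\C^*)^m}=(\HH^\iota_{\C^*})^m=(x+1)^m$.

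The only step needing care is the sign of $\iota^*$ on $H^1_c$, which is settled by either the duality argument or the $\PP^1$ exact sequence. Everything else is routine.
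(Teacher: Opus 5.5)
Your proposal is correct and matches the paper's proof: the paper computes $H^1_c(\C^*)$ from the sequence $0\to H^0(\PP^1)\to H^0(\{0,\infty\})\to H^1_c(\C^*)\to 0$, which you give as your cross-check, and it gets the trivial action on $H^2_c(\C^*)=H^2(\PP^1)$ because $\iota$ has degree one. Your main argument instead uses Poincar\'e duality against $dz/z$, which is an equally valid way to get the sign on $H^1_c$; the $m$-fold statement then follows by K\"unneth exactly as you say.
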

\begin{proof}
From $0\to H^0(\PP^1)\to H^0(\{0,\infty\})\to H^1_c(\C^*)\to H^1(\PP^1)=0$, we get
$H^1_c(\C^*)=\Q$ spanned by the class of $[0]-[\infty]$, of weight $0$, on which $\iota$
acts by $-1$ since it exchanges $0$ and $\infty$. We also have  $H^2_c(\C^*)=H^2(\PP^1)=\Q(-1)$, on
which $\iota$ acts trivially, being a degree-one self-map of $\PP^1$.  Hence
$\HH^\iota_{\C^*}=-(-1)+x=x+1$.  The second claim follows by the K\"unneth formula.
\end{proof}

\subsection{Symmetric functions and equivariant Hodge-Deligne polynomials}\label{S2.2}
Let $\Lambda$ be the ring of symmetric functions over $\QQ$, with power sums $p_k$, and let $\Lam=\QQ[u,v][[p_1,p_2,\dots]]$ be its completion with coefficients in $\QQ[u,v]$, graded by $\deg p_k=k$. For a partition $\lambda$ we write $p_\lambda=\prod_ip_{\lambda_i}$, and for $\sigma\in\bS_n$ we write $\lambda(\sigma)$ for its cycle type. The \emph{Frobenius characteristic} of a finite dimensional representation $W$ of $\bS_n$ is
\[
\ch_n(W)=\frac1{n!}\sum_{\sigma\in\bS_n}\tr(\sigma|W)\,p_{\lambda(\sigma)}=\sum_{\lambda\vdash n}[W:V_\lambda]\,s_\lambda,
\]
where $V_\lambda$ is the Specht module and $s_\lambda$ the Schur function. The trivial and the sign representations have characteristics $h_n$ and $e_n$, and $\ch$ takes induction products to products.

\begin{defi}\label{SHD}
For a Deligne-Mumford stack $X$ with an action of $\bS_n$, the \emph{$\bS_n$-equivariant Hodge-Deligne polynomial} is
\[
\HH^{\bS}_X=\frac1{n!}\sum_{\sigma\in\bS_n}\HH^\sigma_X\,p_{\lambda(\sigma)}=\sum_{k,p,q}(-1)^k\ch_n\big(H^{p,q;k}_c(X)\big)\,u^pv^q\in\Lam.
\]
If $X$ is smooth and proper and $H^k(X)$ is pure of weight $k$ for all $k$, then $(-1)^{p+q}$ times the coefficient of $u^pv^qs_\lambda$ in $\HH^{\bS}_X$ is the multiplicity of $V_\lambda$ in $H^{p,q}(X)=H^{p,q;p+q}(X)$.
\end{defi}

An \emph{$\bS$-space} $\cX$ is a sequence of Deligne-Mumford stacks $\cX(n)$ with actions of $\bS_n$, $n\ge0$, and we write $\HH^{\bS}_\cX=\sum_n\HH^{\bS}_{\cX(n)}$. The species (or Day convolution) product is
\[(\cX\cdot\cY)(n)=\coprod_{I\sqcup J=[n]}\cX(I)\times\cY(J),\]
and if $\cY(0)=\emptyset$, the composition is $(\cX\pl\cY)(n)=\coprod_j\big(\cX(j)\times\cY^{\cdot j}(n)\big)/\bS_j$. Plethysm on $\Lam$ is the operation $(f,g)\mapsto f\pl g$, defined when $g$ has no constant term or $f$ is a polynomial, which is additive and multiplicative in $f$ and satisfies
\[p_k\pl p_m=p_{km},\quad p_k\pl u=u^k,\quad p_k\pl v=v^k,\]
\[p_k\pl(g_1+g_2)=p_k\pl g_1+p_k\pl g_2,\quad p_k\pl(g_1g_2)=(p_k\pl g_1)(p_k\pl g_2).\]
Thus if $f=\sum_\lambda c_\lambda(u,v)p_\lambda$ then $f\pl g=\sum_\lambda c_\lambda(u,v)\prod_i (p_{\lambda_i}\pl g)$: the coefficients of $f$ are left alone and those of $g$ are acted on by the Adams operations $\psi^k(u^pv^q)=u^{kp}v^{kq}$. With these conventions, and the Koszul sign rule for the action of $\bS_j$ on tensor products of graded vector spaces,
\beq\label{lambdaring}
\HH^{\bS}_{\cX\cdot\cY}=\HH^{\bS}_{\cX}\HH^{\bS}_{\cY},\qquad \HH^{\bS}_{\cX\pl\cY}=\HH^{\bS}_{\cX}\pl\HH^{\bS}_{\cY}
\eeq
by \cite{Get95b} and \cite[\S5]{GeP}. This uses only the K\"unneth formula and \eqref{14}.

The \emph{rank map} $\rk:\Lam\to\QQ[u,v][[y]]$ is the ring homomorphism with $p_1\mapsto y$ and $p_k\mapsto0$ for $k\ge2$. It sends $\ch_n(W)$ to $\dim(W)\,y^n/n!$, hence $\HH^{\bS}_X$ to $\HH_X\,y^n/n!$ for an $\bS_n$-stack $X$.

\begin{lemm}\label{rankmap}
Let $f,g\in\Lam$ and let $g(0)\in\QQ[u,v]$ be the constant term of $g$. Then $\rk(p_1\pl g)=\rk(g)$ and $\rk(p_k\pl g)=\psi^k(g(0))$ for $k\ge2$. In particular, if $g(0)=0$ then $\rk(f\pl g)=\rk(f)\big(\rk(g)\big)$, the composition of power series in $y$.
\end{lemm}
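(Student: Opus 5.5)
The plan is to reduce everything to the behaviour of the rank map on the generators $p_k$ and then extend by multiplicativity and additivity in $f$. First I would treat $\rk(p_k\pl g)$. Write $g=g(0)+\sum_{\lambda\neq\emptyset}c_\lambda(u,v)p_\lambda$. By the rules listed above, $p_k\pl g=\psi^k(g(0))+\sum_{\lambda\ne\emptyset}\psi^k(c_\lambda)\prod_i p_{k\lambda_i}$, since $p_k$ acts additively and multiplicatively, as $\psi^k$ on $\QQ[u,v]$, and by $p_k\pl p_m=p_{km}$. Applying $\rk$, every monomial $\prod_ip_{k\lambda_i}$ with $\lambda\neq\emptyset$ and $k\ge2$ contains some $p_{k\lambda_i}$ with $k\lambda_i\ge2$, so it is sent to $0$. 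This leaves $\rk(p_k\pl g)=\psi^k(g(0))$ for $k\ge2$.

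For $k=1$, the operation $p_1\pl{-}$ is the identity: $\psi^1=\id$ and $p_1\pl p_m=p_m$. Hence $\rk(p_1\pl g)=\rk(g)$.

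For the last claim I would write $f=\sum_\lambda c_\lambda p_\lambda$, so that $f\pl g=\sum_\lambda c_\lambda\prod_i(p_{\lambda_i}\pl g)$. Because $\rk$ is a ring homomorphism, $\rk(f\pl g)=\sum_\lambda c_\lambda\prod_i\rk(p_{\lambda_i}\pl g)$. When $g(0)=0$, each factor with $\lambda_i\ge2$ vanishes, so only the $\lambda=(1^m)$ terms survive. They contribute $\sum_m c_{(1^m)}\rk(g)^m$, which is $\rk(f)$ evaluated at $y=\rk(g)$, because $\rk(f)=\sum_m c_{(1^m)}y^m$.

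The only point needing care is convergence in the completion. Since $g(0)=0$, the series $\rk(g)$ has no constant term, and $f\pl g$ is well defined degree by degree. So the interchange of $\rk$ with the infinite sum over $\lambda$ is justified by finiteness in each $y$-degree: $\rk$ is continuous for the degree filtration, since it preserves degree with $\deg y=1$. This bookkeeping is the main, though mild, obstacle.
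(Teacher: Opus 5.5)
Your proof is correct and follows essentially the same route as the paper. You expand $p_k\pl g$ by applying $\psi^k$ to the coefficients and replacing $p_m$ by $p_{km}$, note that $\rk$ kills every positive-degree term when $k\ge2$, and conclude that only the $p_1^m$ terms of $f$ survive in $f\pl g$. Your explicit treatment of $k=1$ and your degree-wise convergence remark are details the paper leaves implicit.
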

\begin{proof}
For $k\ge2$, $p_k\pl g$ is obtained from $g$ by applying $\psi^k$ to the coefficients and replacing each $p_m$ by $p_{km}$, so all its terms of positive degree are killed by $\rk$. If $g(0)=0$, only the terms $c_{1^m}p_1^m$ of $f$ survive, and they give $\sum_mc_{1^m}\rk(g)^m$.
\end{proof}

\subsection{Twisted characteristics}\label{S2.3}
Let $\cX$ be an $\bS$-space and $\alpha$ a finite order automorphism of $\cX$, i.e.\ automorphisms of the $\cX(n)$ commuting with the $\bS_n$-actions. We define the \emph{$\alpha$-twisted characteristic}
\[
\HH^{\bS,\alpha}_{\cX}=\sum_{n\ge0}\frac1{n!}\sum_{\sigma\in\bS_n}\HH^{\sigma\alpha}_{\cX(n)}\,p_{\lambda(\sigma)}\in\Lam.
\]
For $\alpha=\id$, it is $\HH^{\bS}_{\cX}$, and $\rk(\HH^{\bS,\alpha}_{\cX})=\sum_n\HH^\alpha_{\cX(n)}y^n/n!$. If a finite group $G$ acts on $\cX$ commuting with the $\bS_n$-actions, then
\beq\label{Burnside}
\HH^{\bS}_{\cX/G}=\frac1{|G|}\sum_{\alpha\in G}\HH^{\bS,\alpha}_{\cX},
\eeq
by \eqref{14} applied to the isotypic components, or equivalently by applying \eqref{14} to the group $\bS_n\times G$. The same definitions apply to $\bS$-objects in any category with traces, such as graded vector spaces with mixed Hodge structures.

\begin{lemm}\label{strata}
Let $X=\bigsqcup_\beta X_\beta$ be a finite stratification of an $\bS_n$-space $X$ by locally closed pieces, permuted by $\bS_n\times\langle\alpha\rangle$. Then
\[\frac1{n!}\sum_\sigma\HH^{\sigma\alpha}_X p_{\lambda(\sigma)}=\frac1{n!}\sum_\sigma\sum_{\beta:\,\sigma\alpha X_\beta=X_\beta}\HH^{\sigma\alpha}_{X_\beta}p_{\lambda(\sigma)}.\]
\end{lemm}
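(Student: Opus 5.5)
The plan is to reduce Lemma~\ref{strata} to Lemma~\ref{15}, applied one group element at a time. The formula is linear in the individual terms indexed by $\sigma$. Both sides are sums over $\sigma\in\bS_n$ with the same weights $p_{\lambda(\sigma)}/n!$. It therefore suffices to prove, for each fixed $\sigma\in\bS_n$,
\[
\HH^{\sigma\alpha}_X=\sum_{\beta:\,\sigma\alpha X_\beta=X_\beta}\HH^{\sigma\alpha}_{X_\beta}.
\]

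Fix $\sigma$ and set $g=\sigma\alpha$. Since $\alpha$ commutes with the $\bS_n$-action, $g$ is a finite order automorphism of $X$: if $\alpha$ has order $r$, then $g^{n!\,r}=\sigma^{n!\,r}\alpha^{n!\,r}=\id$. The group $G=\langle g\rangle$ is a finite subgroup of $\bS_n\times\langle\alpha\rangle$. By hypothesis, that group permutes the strata $X_\beta$, so $G$ does as well. Lemma~\ref{15} applied to $G$ and the element $g$ then gives exactly the displayed identity.

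The pieces here are only locally closed and are permuted rather than fixed. This is already handled in the proof of Lemma~\ref{15}. One orders the strata compatibly with closures and uses additivity of the $\langle g\rangle$-equivariant trace along the equivariant long exact sequences in compactly supported cohomology. A cyclically permuted orbit of strata of length $m>1$ then contributes a trace zero matrix.

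The only point needing care is that Lemma~\ref{15} was stated for a group $G$ preserving the stratification up to permutation. The one subtlety is to check that a $G$-stable ordering of the strata compatible with closures exists. If necessary, one can first refine by grouping the strata into $G$-orbits, which is what the proof of Lemma~\ref{15} does. After that the result follows by multiplying by $p_{\lambda(\sigma)}/n!$ and summing over $\sigma$.
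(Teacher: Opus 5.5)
Your proposal is correct and is the paper's argument: the paper's proof is the one line ``apply Lemma~\ref{15} to each $\sigma\alpha$,'' and your reduction to a fixed $\sigma$ with $G=\langle\sigma\alpha\rangle\subset\bS_n\times\langle\alpha\rangle$ spells this out. That group is finite because $\sigma$ and $\alpha$ commute. The concern in your last paragraph about a compatible ordering of the strata belongs to the proof of Lemma~\ref{15}, so you can simply cite that lemma as stated.
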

\begin{proof}
Apply Lemma \ref{15} to each $\sigma\alpha$.
\end{proof}

Unlike in the non-equivariant setting, a stratum moved by $\alpha$ may be fixed by $\sigma\alpha$ for suitable $\sigma\in\bS_n$. The following lemma, a form of the classical relation between wreath products and plethysm \cite[Appendix A]{Mac}, \cite[\S7]{GK}, accounts for this.

\begin{lemm}\label{orbit}
Let $\cY$ be an $\bS$-space with a finite order automorphism $\gamma$, and let $\cX=\cY^{\cdot\ell}$ be the species product of $\ell$ copies of $\cY$, with the automorphism $\alpha$ which maps the $i$-th factor to the $(i+1)$-st factor by the identity for $i<\ell$ and the $\ell$-th factor to the first by $\gamma$. Then
\[\HH^{\bS,\alpha}_{\cX}=p_\ell\pl\HH^{\bS,\gamma}_{\cY}.\]
\end{lemm}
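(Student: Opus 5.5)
We prove the identity by computing both sides degree by degree, comparing the coefficient of $p_\mu$. Fix $n$. By definition, $\cX(n)=\coprod_{I_1\sqcup\dots\sqcup I_\ell=[n]}\cY(I_1)\times\dots\times\cY(I_\ell)$. For $\sigma\in\bS_n$, the automorphism $\sigma\alpha$ sends the summand indexed by $(I_1,\dots,I_\ell)$ to the summand indexed by $(\sigma I_\ell,\sigma I_1,\dots,\sigma I_{\ell-1})$, where the last index is moved to the front and $\gamma$ is applied. The summands are open and closed. So by Lemma \ref{strata} (or Lemma \ref{15}), only the summands fixed by $\sigma\alpha$ contribute to $\HH^{\sigma\alpha}_{\cX(n)}$. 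Such a summand satisfies $\sigma I_i=I_{i+1}$ for $i<\ell$ and $\sigma I_\ell=I_1$. Hence $\sigma^\ell$ preserves $I_1$, all $|I_i|=m$ with $n=\ell m$, and the decomposition is determined by $I_1$ and $\sigma$.

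Next we compute the trace on a fixed summand $Y_1\times\dots\times Y_\ell$ with $Y_i=\cY(I_i)$. The map $\sigma\alpha$ cyclically permutes the factors; composing around the cycle gives the endomorphism $\tau\gamma$ of $Y_1$, where $\tau=\sigma^\ell|_{I_1}$. The standard trace identity for a cyclic permutation of tensor factors then applies: if $T(a_1\otimes\dots\otimes a_\ell)=\pm f_\ell a_\ell\otimes f_1a_1\otimes\dots$, then $\operatorname{str}T=\operatorname{str}(f_\ell\cdots f_1)$ on the first factor. Here the Koszul signs are exactly absorbed by the use of supertraces. Passing to Hodge-graded pieces, where $(u,v)$ weights multiply, this gives $\HH^{\sigma\alpha}_{\text{summand}}=\psi^\ell\big(\HH^{\tau\gamma}_{\cY(m)}\big)$ via the Künneth formula. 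The Adams operation appears because a class of type $(p,q)$ in $Y_1$ yields the diagonal tensor of type $(\ell p,\ell q)$. This cyclic-trace step, with its signs and Hodge weights, is the only real content, and I expect it to be the main point to check carefully.

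Finally we count. The pairs $(\sigma,I_1)$ with fixed summand correspond to choosing an ordered decomposition together with bijections $I_i\to I_{i+1}$ for $i<\ell$ and a permutation $\tau$ of $I_1$. If $\tau$ has cycle type $\nu\vdash m$, then $\sigma$ has cycle type $\ell\nu$. The number of such $\sigma$ for a given $\tau$ is $n!/(\ell^{\,0}\,m!)\cdot(\text{choices of bijections})$ up to the correct normalization; precisely, $\frac1{n!}\sum_{\sigma}\sum_{\text{fixed}}$ reduces to $\frac1{m!}\sum_{\tau\in\bS_m}\psi^\ell(\HH^{\tau\gamma}_{\cY(m)})\,p_{\ell\nu(\tau)}$. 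This count is the standard wreath-product computation (the $\ell$ choices of which index is "first" cancel against the $\ell$ rotations). The resulting expression equals $p_\ell\pl\big(\frac1{m!}\sum_\tau\HH^{\tau\gamma}_{\cY(m)}p_{\nu(\tau)}\big)$ by the rules $p_\ell\pl p_\nu=p_{\ell\nu}$ and $p_\ell\pl(u^pv^q)=u^{\ell p}v^{\ell q}$. Summing over $m$ gives $\HH^{\bS,\alpha}_{\cX}=p_\ell\pl\HH^{\bS,\gamma}_{\cY}$.
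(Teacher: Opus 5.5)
Your proposal is correct and takes essentially the same approach as the paper. Both restrict to the summands with $\sigma(I_i)=I_{i+1}$ (indices mod $\ell$) and use the cyclic supertrace identity to get $\psi^\ell(\HH^{\tau\gamma}_{\cY(m)})$ with $\tau=\sigma^\ell|_{I_1}$. Both then note that $\sigma$ has cycle type $\ell\lambda(\tau)$ and count. The only blemish is the garbled intermediate expression (the one with $\ell^{\,0}$) in your counting step. Your own parametrization already gives the clean count the paper uses: there are $n!/(m!)^\ell$ ordered decompositions, and $(m!)^{\ell-1}$ permutations $\sigma$ for each $\tau\in\bS(I_1)$, so the prefactor $\frac1{n!}$ becomes $\frac1{m!}$ with no rotation argument needed.
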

\begin{proof}
A point of $\cX(n)$ is an ordered decomposition $[n]=I_1\sqcup\dots\sqcup I_\ell$ together with points of $\cY(I_i)$. By Lemma \ref{strata}, for $\sigma\in\bS_n$, only the strata with $\sigma(I_i)=I_{i+1}$ (indices mod $\ell$) contribute to the trace of $\sigma\alpha$. Then $|I_i|=m$ for all $i$ and $n=\ell m$. Identifying all $I_i$ with $I_1$ via $\sigma$, the map $\sigma\alpha$ acts on $H^*_c(\cY(I_1))^{\otimes\ell}$ as the cyclic shift composed with $(\tau\gamma,1,\dots,1)$, where $\tau=\sigma^\ell|_{I_1}$. For a graded vector space $W$ with Hodge structures and a graded endomorphism $A$ preserving Hodge types, the super trace of the cyclic shift composed with $(A,1,\dots,1)$ on $W^{\otimes\ell}$ is $\psi^\ell(\str A)$. In a homogeneous basis $(w_i)$, only the vectors $w_i^{\otimes\ell}$ contribute to the trace, with coefficient $A_{ii}$, Koszul sign $(-1)^{|w_i|(\ell-1)}$ and parity $(-1)^{\ell|w_i|}$, so the total sign is $(-1)^{|w_i|}$, and the Hodge type of $w_i^{\otimes\ell}$ is $\ell$ times that of $w_i$. Moreover the cycle type of $\sigma$ is $\ell$ times that of $\tau$. There are $n!/(m!)^{\ell}$ ordered decompositions with $|I_i|=m$, and for each of them and each $\tau\in\bS(I_1)$, there are $(m!)^{\ell-1}$ permutations $\sigma$ with $\sigma(I_i)=I_{i+1}$ and $\sigma^\ell|_{I_1}=\tau$. Hence
\[\frac{1}{n!}\sum_\sigma\HH^{\sigma\alpha}_{\cX(n)}p_{\lambda(\sigma)}=\frac1{m!}\sum_{\tau\in\bS_m}\psi^\ell\big(\HH^{\tau\gamma}_{\cY(m)}\big)p_{\ell\lambda(\tau)},\]
which is the degree $n=\ell m$ part of $p_\ell\pl\HH^{\bS,\gamma}_{\cY}$.
\end{proof}

\begin{lemm}\label{fibered}
Let $q:Y\to B$ be a representable morphism of Deligne-Mumford stacks and $\alpha$ a finite order automorphism of $q$, i.e.\ compatible automorphisms of $Y$ and $B$. Let $Y^{(r)}_B$ be the $r$-fold fibered power, with $\bS_r$ permuting the factors and $\alpha$ acting diagonally, and let $Y^{(\bullet)}_B$ be the $\bS$-space $r\mapsto Y^{(r)}_B$. Let $K=Rq_!\QQ$, an $\alpha$-equivariant object whose class lies in the Grothendieck group of $\alpha$-equivariant variations of mixed Hodge structure on $B$. Then
\[
\HH^{\bS,\alpha}_{Y^{(\bullet)}_B}=\HH^{\alpha}_{B;\sum_r h_r[K]},\qquad \sum_r h_r[K]=\sum_\lambda s_\lambda\otimes\mathbb S_\lambda(K),
\]
where $\mathbb S_\lambda$ is the super Schur functor and $\HH^\alpha_{B;-}$ is applied to the coefficients of the $s_\lambda$. In particular:
\begin{enumerate}[label=(\alph*)]
\item if $K=T\oplus K'$ with $T$ a constant variation on which $\alpha$ acts by a fixed automorphism, then
\[\HH^{\bS,\alpha}_{Y^{(\bullet)}_B}=\exp\Big(\sum_{k\ge1}\psi^k\big(\str(\alpha^k|T)\big)\frac{p_k}k\Big)\cdot\HH^\alpha_{B;\sum_rh_r[K']};\]
\item if $K'=W[-1]$ with $W$ a variation of pure Hodge structure placed in degree $0$, then $\sum_r h_r[W[-1]]=\sum_\lambda(-1)^{|\lambda|}s_{\lambda'}\otimes\mathbb S_\lambda W$.
\end{enumerate}
\end{lemm}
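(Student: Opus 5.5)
Since the compactly supported cohomology of the fibered power is built fiberwise from that of $q$, I would reduce to a computation in the Grothendieck group of $\alpha$-equivariant variations of mixed Hodge structure on $B$ and then take $\HH^\alpha_{B;-}$.

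\textbf{Step 1.} Let $q_r:Y^{(r)}_B\to B$ be the structure map. The relative K\"unneth formula with compact supports gives an isomorphism $Rq_{r!}\QQ\cong K^{\otimes r}$ in the derived category of mixed Hodge modules. It is compatible with the $\bS_r$-action, where $\bS_r$ acts on $K^{\otimes r}$ by permuting factors with the Koszul sign. It is also compatible with the diagonal action of $\alpha$. Representability of $q$ ensures that this is a statement about honest fibered products.

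\textbf{Step 2.} By the Leray spectral sequence with compact supports, $H^*_c(Y^{(r)}_B)=H^*_c(B;Rq_{r!}\QQ)$. The Euler-characteristic trace is additive along the spectral sequence, so for $\sigma\in\bS_r$,
\[\HH^{\sigma\alpha}_{Y^{(r)}_B}=\HH^{\alpha}_{B;(K^{\otimes r},\sigma)}.\]
Here the right side is the equivariant trace on $B$ with coefficients in $K^{\otimes r}$, twisted by $\sigma$. Averaging with $p_{\lambda(\sigma)}/r!$ gives $\HH^\alpha_{B;-}$ applied to the Frobenius characteristic of $K^{\otimes r}$ as an $\bS_r$-object. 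This characteristic is $h_r[K]=\sum_{|\lambda|=r}s_\lambda\otimes\mathbb S_\lambda K$ by Schur--Weyl duality for super vector spaces. To apply this in the Grothendieck group, I would isotypically decompose $K^{\otimes r}=\bigoplus_\lambda V_\lambda\otimes\mathbb S_\lambda K$ and use that $\alpha$ commutes with $\bS_r$. Summing over $r$ gives the main formula.

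\textbf{Step 3 (consequences).} For (a), use $h[T\oplus K']=h[T]\,h[K']$, i.e.\ the coproduct identity $h_r[A+B]=\sum_{i+j=r}h_i[A]h_j[B]$. Then $K^{\otimes r}$ splits as an induced representation from $\bS_i\times\bS_j$, compatibly with $\alpha$. Since $T$ is constant, $\HH^\alpha_{B;\mathbb S_\mu T\otimes M}=\str(\alpha|\mathbb S_\mu T)\,\HH^\alpha_{B;M}$. The twisted characteristic of $\sum_i T^{\otimes i}$ with $\alpha$ acting diagonally is $\exp\big(\sum_k \psi^k(\str(\alpha^k|T))p_k/k\big)$. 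This follows from the standard cycle computation: a $k$-cycle of $\sigma$ composed with $\alpha$ contributes $\str(\alpha^k)$ with Hodge types multiplied by $k$, exactly as in the proof of Lemma~\ref{orbit}.

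For (b), an odd shift transposes Schur functors in the super setting: $\mathbb S_\lambda(W[-1])=(\mathbb S_{\lambda'}W)[-|\lambda|]$. Hence $h_r[W[-1]]=\sum_\lambda s_\lambda\otimes(-1)^{|\lambda|}\mathbb S_{\lambda'}W$, and reindexing $\lambda\leftrightarrow\lambda'$ gives the claim.

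\textbf{Main obstacle.} The main difficulty is Step 1/2: making the trace identity rigorous for the composite $\sigma\alpha$ and the Hodge filtrations. I would argue with mixed Hodge modules (Saito), where relative K\"unneth and Leray are functorial. Alternatively, I would stratify $B$ so that the $R^iq_!\QQ$ are local systems, reducing via Lemma~\ref{strata} to variations of mixed Hodge structure on strata. Lemma~\ref{strata} applies here because the $\alpha$-permuted strata contribute nothing.
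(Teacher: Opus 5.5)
Your proposal is correct and follows essentially the same route as the paper: relative K\"unneth plus the Leray spectral sequence to reduce to the Frobenius characteristic of $K^{\otimes r}$, and for (a) the factorization $h[T\oplus K']=h[T]\,h[K']$ together with $H^*_c(B;T^{\otimes a}\otimes M)=T^{\otimes a}\otimes H^*_c(B;M)$ for constant $T$. For (b), your rule $\mathbb S_\lambda(W[-1])=(\mathbb S_{\lambda'}W)[-|\lambda|]$ is just the categorical form of the paper's argument via the Cauchy identity and $s_\lambda[-Y]=(-1)^{|\lambda|}s_{\lambda'}[Y]$.
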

\begin{proof}
By the relative K\"unneth formula, $R(q^{(r)})_!\QQ=K^{\otimes r}$, with $\bS_r$ permuting the factors with the Koszul sign and $\alpha$ acting diagonally. Taking compactly supported cohomology of $B$ with coefficients in the $\bS_r\times\langle\alpha\rangle$-equivariant object $K^{\otimes r}$, through the Leray spectral sequence which does not change traces, and Frobenius characteristics gives the first statement, as in \cite[\S5]{Get95b} and \cite[\S5]{GeP}. For (a), $\sum_rh_r[T\oplus K']=\sum_rh_r[T]\cdot\sum_rh_r[K']$, the cohomology of $B$ with coefficients in $T^{\otimes a}\otimes K'^{\otimes b}$ is $T^{\otimes a}\otimes H^*_c(B;K'^{\otimes b})$ since $T$ is constant, and $\sum_rh_r[T]=\exp(\sum_k p_k\otimes\psi^k(T)/k)$ in the $\lambda$-ring of $\alpha$-equivariant graded objects, where $\str(\alpha|\psi^kT)=\psi^k\str(\alpha^k|T)$. For (b), use the Cauchy identity $\sum_r h_r[XY]=\sum_\lambda s_\lambda[X]s_\lambda[Y]$ and $s_\lambda[-Y]=(-1)^{|\lambda|}s_{\lambda'}[Y]$.
\end{proof}

\begin{lemm}\label{Cstar}
Let $Q_m=(\CC^*)^m/\CC^*$ with $\bS_m$ permuting the coordinates and $\iota$ the diagonal inversion, and let $Q$ be the $\bS$-space $m\mapsto Q_m$ for $m\ge1$, with $Q(0)=\emptyset$. Then
\[
\HH^{\bS}_{Q}=\frac{\bh(x)\be(-1)-1}{x-1},\qquad \HH^{\bS,\iota}_{Q}=\frac{\bh(x)\be(1)-1}{x+1}.
\]
\end{lemm}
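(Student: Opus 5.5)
The plan is to compare $Q_m$ with the full torus $T_m=(\CC^*)^m$ through the quotient map $q:T_m\to Q_m$, and to compute the characteristic of the torus directly from Lemma \ref{fibered}.

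\emph{The torus.} The $\bS$-space $m\mapsto T_m$ is the fibered power $Y^{(\bullet)}_B$ with $Y=\CC^*$ and $B=\mathrm{pt}$. Inversion acts diagonally, and this action commutes with $\bS_m$. By Lemma \ref{fibered}(a), applied with $K=T=H^*_c(\CC^*)$ and $K'=0$, we get
\[
\HH^{\bS,\alpha}_{T_\bullet}=\exp\Big(\sum_{k\ge1}\psi^k\big(\str(\alpha^k|H^*_c(\CC^*))\big)\frac{p_k}{k}\Big).
\]
For $\alpha=\id$ the supertrace is $x-1$. For $\alpha=\iota$ it is $x+1$ when $k$ is odd and $x-1$ when $k$ is even, by Lemma \ref{16}; we can write both cases as $x^k-(-1)^k$ after applying $\psi^k$. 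Summing the exponentials gives
\[
\HH^{\bS}_{T_\bullet}=\bh(x)\be(-1),\qquad \HH^{\bS,\iota}_{T_\bullet}=\exp\Big(\sum_k\frac{x^kp_k}{k}\Big)\exp\Big(\sum_k\frac{(-1)^{k-1}p_k}{k}\Big)=\bh(x)\be(1).
\]
Both include the constant term $1$ coming from $m=0$.

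\emph{The quotient map.} For $m\ge1$, the map $q:T_m\to Q_m$ is a principal bundle for the diagonal $\CC^*$. It is equivariant for $\sigma\iota$ and for $\sigma$. Since the structure group is connected, $Rq_!\QQ$ is a constant complex with fibre $H^*_c(\CC^*)$. The diagonal subgroup is pointwise fixed by $\bS_m$ and is inverted by $\iota$. Hence $\sigma$ acts on the fibre cohomology trivially, with supertrace $x-1$, and $\sigma\iota$ acts with supertrace $x+1$ by Lemma \ref{16}. These supertraces do not depend on $\sigma$.

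The Leray spectral sequence does not change traces, as in the proof of Lemma \ref{fibered}. It therefore gives
\[
\HH^{\sigma}_{T_m}=(x-1)\,\HH^{\sigma}_{Q_m},\qquad \HH^{\sigma\iota}_{T_m}=(x+1)\,\HH^{\sigma\iota}_{Q_m}.
\]
Summing over $\sigma$ with weights $p_{\lambda(\sigma)}/m!$ and over $m\ge1$, we obtain
\[
\bh(x)\be(-1)-1=(x-1)\HH^{\bS}_Q,\qquad \bh(x)\be(1)-1=(x+1)\HH^{\bS,\iota}_Q.
\]
Dividing gives the two claimed formulas.

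\emph{Main obstacle.} The main point to justify is that the fibre contribution factors out with a supertrace independent of the base point. This needs two things. First, $Rq_!\QQ$ must be constant as an equivariant object. This holds because the monodromy of a $\CC^*$-torsor on $H^*_c$ of the fibre is trivial, the structure group being connected. Second, the group element must act on the fibres through its action on the diagonal subgroup. Both statements are stable under passing to the fixed loci that are implicit in the trace computation.

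As a cross-check of the untwisted formula, I would use the $\bS_m$-stable stratification of $\PP^{m-1}$ by coordinate supports. It gives $\HH^{\bS}_{\PP^{\bullet-1}}=\HH^{\bS}_Q\cdot\bh(1)$. Here $H^*(\PP^{m-1})$ carries the trivial action, so $\HH^{\bS}_{\PP^{\bullet-1}}=\sum_m h_m\frac{x^m-1}{x-1}=\frac{\bh(x)-\bh(1)}{x-1}$. Multiplying by $\be(-1)$ recovers the same answer for $\HH^{\bS}_Q$.
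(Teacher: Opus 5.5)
Your proof is correct and follows essentially the same route as the paper: you compute the untwisted and $\iota$-twisted characteristics of the tori $(\CC^*)^m$ by Lemma \ref{fibered}(a) with supertraces $x-(-1)^k$, then divide by the fibre factor $x-1$ or $x+1$ using the diagonal $\CC^*$-bundle $(\CC^*)^m\to Q_m$, which the paper simply notes is a trivial principal bundle commuting with $\bS_m$ and normalized by $\iota$. Your stratification of $\PP^{m-1}$ by coordinate supports, giving $\HH^{\bS}_Q\cdot\bh(1)=(\bh(x)-\bh(1))/(x-1)$, is an independent check of the untwisted formula that the paper does not include.
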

\begin{proof}
$H^*_c(\CC^*)$ consists of $\QQ(-1)$ in degree $2$, with trivial $\iota$-action, and a class of weight $0$ in degree $1$, on which $\iota$ acts by $-1$ (Lemma \ref{16}). So $\str(\iota^k|H^*_c(\CC^*))=x-(-1)^k$. By Lemma \ref{fibered} (a) with $B$ a point, $\sum_m\HH^{\bS}_{(\CC^*)^m}=\exp(\sum_k(x^k-1)p_k/k)=\bh(x)\be(-1)$ and $\sum_m\HH^{\bS,\iota}_{(\CC^*)^m}=\exp(\sum_k(x^k-(-1)^k)p_k/k)=\bh(x)\be(1)$. For $m\ge1$, the quotient map $(\CC^*)^m\to Q_m$ is a trivial principal bundle under the diagonal $\CC^*$, which commutes with $\bS_m$ and is normalized by $\iota$, inducing the inversion on $\CC^*$. Hence $\HH^{\sigma}_{(\CC^*)^m}=(x-1)\HH^\sigma_{Q_m}$ and $\HH^{\sigma\iota}_{(\CC^*)^m}=(x+1)\HH^{\sigma\iota}_{Q_m}$ for all $\sigma\in\bS_m$.
\end{proof}

\bigskip
\section{From maximal to minimal weights}\label{S3}

In this section, we recall the Hassett moduli stacks of weighted pointed curves from \cite{Hassett} and the factorization of the natural morphism
\beq\label{11} \rho:\bcM_{g,n}\lra \bcM_{g,(0^+)^n}\eeq
from the moduli space of maximally weighted pointed curves to that of minimally weighted pointed curves (Theorem \ref{9}) from \cite{Ki}. As a consequence, we derive a functional formula that compares the generating functions of their Hodge-Deligne polynomials (Proposition \ref{7}), and we show that the Hodge conjecture for one of these moduli spaces implies the same for the other. Finally, we prove the $\bS_n$-equivariant form of the comparison (Theorem \ref{A}) by a different argument.

\subsection{Hassett spaces and the blowup formula}
\begin{defi} \cite{Hassett}
    Let $g,n\ge 0$ and $\cA=(a_1,\cdots,a_n)\in (0,1]^n$ with $2g-2+\sum a_i>0$. A connected projective curve $C$ with marked points $p_1,\cdots,p_n\in C$ is called $\cA$-\emph{stable} if
    \begin{enumerate}
        \item $C$ has at worst nodal singularities and $p_i$ are all smooth points;
 \item if $p_{i_1}=\cdots =p_{i_k}$, then $a_{i_1}+\cdots+a_{i_k}\le 1$;
 \item $\omega_C(a_1p_1+\cdots+a_np_n)$ is ample.\end{enumerate}
\end{defi}

We let $\bcM_{g,\cA}$ be the moduli stack of $\cA$-stable $n$-pointed curves of genus $g$, which is a smooth proper Deligne-Mumford stack of dimension $3g-3+n$ with projective connected coarse moduli space by \cite{Hassett}.
When $a_i>\frac12$ for all $i$, $\bcM_{g,\cA}$ is the Knudsen-Deligne-Mumford moduli stack $\bcM_{g,n}$ of stable curves of genus $g$ with $n$ marked points.

Let $\cA_{r|s}=(1,\cdots,1,0^+,\cdots,0^+)$ where $1$ is repeated $r$ times and $0^+$ is repeated $s$ times. Here of course $0^+$ means a positive rational number sufficiently close to $0$.
Let
\beq\label{8} \bcM_{g,r|s}=\bcM_{g,\cA_{r|s}}, \quad \HH_{r|s}=\HH_{\bcM_{g,r|s}}.\eeq
In particular, $\HH_{n|0}=\HH_{\bcM_{g,n}}$ (resp. $\HH_{0|n}=\HH_{\bcM_{g,(0^+)^n}}$) is the Hodge-Deligne polynomial of the moduli stack $\bcM_{g,n}=\bcM_{g,n|0}$ (resp. $\bcM_{g,(0^+)^n}=\bcM_{g,0|n}$) of stable $n$-pointed curves of genus $g$ with maximal weights $1$ (resp. minimal weights $0^+$). By condition (3), an irreducible component of genus $0$ of a curve in $\bcM_{g,(0^+)^n}$ has at least two nodes, and it carries a marked point if it has exactly two. In other words, curves in $\bcM_{g,(0^+)^n}$ have no rational tails, and the marked points may coincide.

\begin{theo}\cite[Theorem 3.1]{Ki}\label{9} Fix $g\ge 1$ and let $x=uv$. For $r\ge 1$,
$\bcM_{g,r|s}$ is obtained from $\bcM_{g,r-1|s+1}$ by a sequence of smooth blowups whose centers are disjoint unions of $\binom{s}{k}$ copies of $\bcM_{g,r|k}$, of codimension $s-k$, for $0\le k\le s-2$. By the blowup formula \cite[p606]{GrHa}, we have
\beq\label{1}
\HH_{r|s}=\HH_{r-1|s+1}+\sum_{k=0}^{s-2}\binom{s}{k}\HH_{r|k}\frac{x^{s-k}-x}{x-1}.\eeq
\end{theo}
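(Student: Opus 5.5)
We prove the statement by factoring the morphism $\bcM_{g,r|s}\to\bcM_{g,r-1|s+1}$ through Hassett's wall-crossings, where the weight of the $r$-th point is raised from $0^+$ to $1$ in steps. Fix the first $r-1$ weights equal to $1$ and the last $s$ weights equal to $0^+$. Let $\cA(t)$ be the weight vector in which the distinguished $r$-th point has weight $t$, and let $t$ increase from $0^+$ to $1$.

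First, locate the walls. By Hassett's reduction morphisms, $\bcM_{g,\cA(t)}$ changes only when $t+k\epsilon=1$ for some subset of the $0^+$-points of size $k$. This is the coincidence condition (2). Condition (3) also changes when a rational tail carrying the $r$-th point and $k$ light points becomes stable, that is, when $t+k\epsilon>1$. So there is one wall for each $k$ with $0\le k\le s$, and they are crossed in the order $k=s,s-1,\dots,0$.

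At the wall for a subset $S$ of size $k$, the locus being modified is the locus where $p_r$ coincides with the points of $S$. Hassett's theorem \cite{Hassett} says that the reduction morphism contracts the rational tails containing exactly $p_r$ and the points of $S$. The exceptional locus over this center consists of such tails. The center is isomorphic to a Hassett space in which $p_r$ absorbs the points of $S$. Its marked points are the $r$ heavy points together with the remaining $s-k$ light points.

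Next, compute the exceptional locus and the codimension. We aim to show the following. The rational tail is an element of $\bcM_{0,\{p_r\}\cup S\cup\{\text{node}\}}$ with the light weights. This moduli space of a heavy point, a node and $k$ light points on $\PP^1$ is $\PP^{k-1}$ (a Losev--Manin/Hassett fact). The normal directions match this, so each wall-crossing is a smooth blowup along a center of codimension $k$.

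Relabel this $k$ as $s-k$ to match the statement. The center is then $\binom{s}{k}$ copies of $\bcM_{g,r|k}$, one for each choice of the absorbed subset of size $s-k$. Walls with absorbed subset of size $0$ or $1$ give isomorphisms, since the fiber $\PP^{-1}$ or $\PP^0$ is trivial. This is why the sum stops at $k\le s-2$.

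Smoothness of the centers holds because they are again Hassett spaces, which are smooth. Disjointness of the copies for different subsets at the same wall follows from condition (2): two distinct subsets of the same size cannot both coincide with $p_r$ unless their union does, and that case lies on a different wall. The order in which the walls are crossed has to be checked so that the centers are the strict transforms just described.

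Finally, apply the blowup formula from \S\ref{S2.1}, $\HH_X=\HH_Y+\HH_Z\,(x^c-x)/(x-1)$, with $c=s-k$, once at each wall, and sum over $k$. This gives \eqref{1}.

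The main obstacle is showing that each wall-crossing is literally a smooth blowup with the stated center. This requires identifying the normal bundle of the center with the deformation space of smoothing the node. I would first try to cite Hassett's description of reduction morphisms together with the standard argument for $\bcM_{0,n}\to$ Losev--Manin; failing that, I would compute the tangent space at a point of the exceptional divisor directly.
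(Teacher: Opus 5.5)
Your argument is correct and is essentially the factorization behind the cited \cite[Theorem 3.1]{Ki}: raise the weight $t$ of $p_r$ from $0^+$ to $1$, and cross the Hassett walls $t+m\epsilon=1$ for $m=s,s-1,\dots,2$ (walls with $m\le1$ give isomorphisms). Each reduction morphism is the blowup of the $\binom{s}{m}$ disjoint loci where $p_r$ absorbs $m$ light points, each isomorphic to $\bcM_{g,r|s-m}$ of codimension $m$, by Hassett's wall-crossing result, and the blowup formula then gives \eqref{1}.

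One slip to fix: in the exceptional fiber at the wall $m$, $p_r$ must carry weight $t\in(1-m\epsilon,1-(m-1)\epsilon)$, not weight $1$. With this weight it can still absorb up to $m-1$ light points, and the space of tails is $(\CC^m\smallsetminus0)/\CC^*=\PP^{m-1}$. With $p_r$ heavy you would instead get the Losev--Manin space $L_m$, which is the fiber of the whole composite $\bcM_{g,r|s}\to\bcM_{g,r-1|s+1}$, not of a single wall.
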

See \cite[\S3]{Ki} for a proof of Theorem \ref{9}. As a consequence, we obtain a clean formula \eqref{6} that compares $\HH_{n|0}$ with $\HH_{0|n}$.

\begin{prop}\label{7}
    For $g\ge 2$, let \beq\label{38}\cH_g(z)=\cH(z)=\sum_{n\ge 0}\HH_{n|0}\frac{z^n}{n!}, \quad \cL_g(y)=\cL(y)=\sum_{n\ge 0}\HH_{0|n}\frac{y^n}{n!}.\eeq
    Then we have
    \beq\label{6}\cH(z)=\cL(y)\eeq where
    \beq\label{5} z=\frac{x^2e^y-x^2-e^{xy}+1}{x(x-1)}=y-\sum_{k\ge 3}\frac{y^k}{k!}\frac{x(x^{k-2}-1)}{x-1}.\eeq
\end{prop}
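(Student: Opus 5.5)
The plan is to package the blowup recursion \eqref{1} into a two-variable exponential generating function and read off the change of variables. Set
\[F(z,y)=\sum_{r,s\ge0}\HH_{r|s}\frac{z^r}{r!}\frac{y^s}{s!},\]
so that $F(z,0)=\cH(z)$ and $F(0,y)=\cL(y)$. For $g\ge2$, $\bcM_{g,r|s}$ is defined for all $r,s\ge0$, so no boundary cases arise. Multiplying \eqref{1} by $z^{r-1}y^s/((r-1)!\,s!)$ and summing over $r\ge1$, $s\ge0$ turns it into a linear first-order PDE. The left side gives $\partial_zF$. The term $\HH_{r-1|s+1}$ gives $\partial_yF$. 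For the correction term, write $\binom{s}{k}\frac{y^s}{s!}=\frac{y^k}{k!}\frac{y^{s-k}}{(s-k)!}$ and put $j=s-k\ge2$. This yields
\[\partial_zF=\partial_yF+\partial_zF\cdot\phi(y),\qquad \phi(y)=\sum_{j\ge2}\frac{x^j-x}{x-1}\frac{y^j}{j!}=\frac{e^{xy}-1-x(e^y-1)}{x-1}.\]
The second term is $\partial_zF\cdot\phi$ because the $\HH_{r|k}$ factor carries $z^{r-1}/(r-1)!$. Hence
\[(1-\phi(y))\,\partial_zF=\partial_yF.\]

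Next, solve this by characteristics. I look for $w(y)$ with $w(0)=0$ such that $F$ is a function of $z+w(y)$ alone. That requires $w'(y)=1/(1-\phi(y))$, which is not obviously the $z$ of \eqref{5}. So I will check the direction of the recursion carefully. Differentiating \eqref{5} gives
\[\frac{dz}{dy}=\frac{x^2e^y-xe^{xy}}{x(x-1)}=\frac{xe^y-e^{xy}}{x-1}=1-\phi(y).\]
This suggests the correct invariant is $F(z,y)=G\big(z+Z(y)\big)$ with $Z'=1-\phi$, i.e.\ that the PDE should read $\partial_zF\cdot(1-\phi)=\partial_yF$ with $F=G(z+Z(y))$. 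Substituting, the two sides become $G'$ versus $G'Z'$. These match exactly when $\partial_yF=(1-\phi)\partial_zF$, which is the PDE above. So $F(z,y)=G(z+Z(y))$ with $Z(y)=\int_0^y(1-\phi)$, and this $Z(y)$ equals the series in \eqref{5}: termwise,
\[\int_0^y(1-\phi)=y-\sum_{k\ge3}\frac{x^{k-1}-x}{x-1}\frac{y^k}{k!}=y-\sum_{k\ge3}\frac{x(x^{k-2}-1)}{x-1}\frac{y^k}{k!},\]
and both expressions vanish at $y=0$.

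Finally, uniqueness. Comparing coefficients of $z^ry^s$, the PDE determines $F$ uniquely from $F(z,0)$, since it expresses $\HH_{r|s+1}$ in terms of lower-$s$ data. Therefore $F(z,y)=\cH(z+Z(y))$, and setting $z=0$ gives $\cL(y)=\cH(Z(y))$, which is \eqref{6}.

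The main obstacle is bookkeeping rather than ideas. One point is getting the index shift right: \eqref{1} relates $\HH_{r|s}$ to $\HH_{r-1|s+1}$, so the $\partial_y$ term is the one carrying $\HH_{r-1|s+1}$, and the sign of $\phi$ must be tracked. The other is checking that the formal substitution is legitimate. It is, because $Z(y)$ has no constant term, so $\cH(Z(y))$ is a well-defined power series in $y$ over $\QQ[u,v]$.
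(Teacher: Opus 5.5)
Your proof is correct and is essentially the paper's argument. The paper also rewrites \eqref{1} as $\HH_r=\frac{1}{1-P}\frac{d}{dy}\HH_{r-1}$, which is the coefficient of $z^{r-1}/(r-1)!$ in your equation $\partial_yF=(1-\phi)\partial_zF$. It uses the same coordinate $\tau=\int_0^y(1-P)$, writing $\HH_r=d^r\HH_0/d\tau^r$ and Taylor-expanding, whereas you propagate from the initial data $F(z,0)=\cH(z)$ and so avoid inverting $\tau$. Just delete the false start claiming $w'=1/(1-\phi)$; the correct condition, which you then derive, is $w'=1-\phi$.
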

We think of $x=uv$ as a constant. Note that $z=z(y)$ is an entire function of $y$ for every $x$ whose inverse function $y=y(z)$ is holomorphic in a neighborhood of $0$.
\begin{proof}
Let \beq\label{2}
\HH_r=\sum_{s\ge 0} \HH_{r|s} \frac{y^s}{s!}, \quad P=\sum_{s\ge 2}\frac{x^s-x}{x-1} \frac{y^s}{s!}=\frac{e^{xy}-xe^y+x-1}{x-1}.\eeq
Then \eqref{1} is equivalent to
\beq\label{3}
\HH_r=\frac{d}{dy}\HH_{r-1}+\HH_r\cdot P \quad \text{or} \quad
\HH_r=\frac{d\HH_{r-1}}{dy} \frac{1}{1-P}.\eeq
We introduce a new variable $\tau$ such that $dy/d\tau=1/(1-P)$:
$$\tau(y)=\int_0^y(1-P(w))dw=\int_0^y\frac{xe^w-e^{xw}}{x-1} dw = \frac{x^2e^y-x^2-e^{xy}+1}{x(x-1)}.$$
Since $\tau(y)=y+O(y^3)$, $\tau$ is a formal coordinate and $y=y(\tau)$ is its inverse power series. Then we have
\beq\label{4}
\HH_r=\frac{d\HH_{r-1}}{dy} \frac{1}{1-P}=\frac{d\HH_{r-1}}{dy} \frac{dy}{d\tau}=\frac{d\HH_{r-1}}{d\tau}=\frac{d^r\HH_{0}}{d\tau^r}.\eeq
Then by Taylor expansion, we have
$$\sum_{r\ge 0}\HH_r\frac{z^r}{r!}=\sum_{r\ge 0}\frac{d^r\HH_{0}}{d\tau^r} \frac{z^r}{r!}=\HH_0(y(\tau+z)).$$
Set $y=0$. Then we have $\tau=0$, $\HH_r=\HH_{r|0}$ and, writing $y$ for $y(z)$,
\[
\cH(z)=\sum_{n\ge 0}\HH_{n|0}\frac{z^n}{n!}=\HH_0(y)=\sum_{n\ge 0} \HH_{0|n}\frac{y^n}{n!}=\cL(y). \qedhere\]
\end{proof}

\begin{rema}\label{Manin}
If we let $\varphi=e^y-1$, we have
$$z=\frac{x^2\varphi-(1+\varphi)^{x}+1}{x(x-1)}.$$
By \cite[Theorem 4.2.1]{Manin}, the inverse is
$$\varphi=z+\sum_{n\ge 2}P_{\bcM_{0,n+1}}(\sqrt{x})\frac{z^n}{n!}$$
where $P_{\bcM_{0,n+1}}(q)$ denotes the Poincar\'e polynomial of $\bcM_{0,n+1}$. 
See Remark \ref{Arem} below. 
\end{rema}

\subsection{The Hodge conjecture}
Since the blowup formula holds for the Chow group as well as the ordinary cohomology group, we have the following as another application of Theorem \ref{9}.
\begin{prop}\label{10} Fix $g, n\ge 1$.
    The Hodge conjecture holds for $\bcM_{g,m}$ for all $m\le n$ if and only if it holds for $\bcM_{g,(0^+)^m}$ for all $m\le n$. If one of these two holds, then the Hodge conjecture holds for $\bcM_{g,r|s}$ for all $r+s\le n$.
\end{prop}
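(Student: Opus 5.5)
The plan is to prove both statements by induction along the chain of Hassett spaces in Theorem \ref{9}. We use the blowup formula for Chow groups and for cohomology, together with its compatibility with the cycle class map. If $\tilde Y\to Y$ is the blowup of a smooth proper Deligne--Mumford stack (or its coarse space, working with $\QQ$-coefficients) along a smooth center $Z$ of codimension $c$, then
\[
H^{2p}(\tilde Y)\cong H^{2p}(Y)\oplus\bigoplus_{i=1}^{c-1}H^{2p-2i}(Z),
\]
an isomorphism of Hodge structures. The analogous decomposition holds for $CH^p(\tilde Y)_\QQ$, and the cycle class maps are compatible with the two decompositions. The maps are pullback, and $j_*(\xi^{i-1}\cdot\pi^*(-))$ from the exceptional divisor. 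Hence $\tilde Y$ satisfies the Hodge conjecture in degree $2p$ if and only if $Y$ does in degree $2p$ and $Z$ does in degrees $2p-2i$. Moreover, Hodge classes on $\tilde Y$ decompose into Hodge classes on the summands, because the decomposition is one of Hodge structures. For a sequence of blowups we iterate. The centers at intermediate steps are strict transforms, but by Theorem \ref{9} each center is a disjoint union of copies of $\bcM_{g,r|k}$ with $k\le s-2$. So the Hodge conjecture for a blowup step reduces to the same statement for these smaller spaces.

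Set $N=r+s$. Theorem \ref{9} says that $\bcM_{g,r|s}$ is obtained from $\bcM_{g,r-1|s+1}$ by blowups with centers of the form $\bcM_{g,r|k}$, where $r+k<r+s$. I will prove the following statement by induction on $N\le n$:

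$(\ast)_N$: for all $r+s=N$, $\bcM_{g,r|s}$ satisfies the Hodge conjecture if and only if $\bcM_{g,0|N}$ does.

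Within a fixed $N$, I induct on $r$. Assume the Hodge conjecture holds for every $\bcM_{g,r'|s'}$ with $r'+s'<N$. Then the centers at level $N$ satisfy it. By the blowup equivalence, $\bcM_{g,r|s}$ satisfies it if and only if $\bcM_{g,r-1|s+1}$ does. Chaining from $r=N$ down to $r=0$ gives $\bcM_{g,N}\Leftrightarrow\bcM_{g,(0^+)^N}$, and every intermediate $\bcM_{g,r|N-r}$ is equivalent to either endpoint.

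For the forward direction of the proposition, assume the Hodge conjecture for $\bcM_{g,m}$ for all $m\le n$. By induction on $N$, the hypothesis at all levels below $N$ gives the smaller spaces. The chain at level $N$ then yields $\bcM_{g,(0^+)^N}$ and all $\bcM_{g,r|s}$ with $r+s=N$. The reverse direction is symmetric. Starting from the $\bcM_{g,(0^+)^m}$, we climb the chain from $r=0$ to $r=N$, again using the inductive hypothesis on the centers. The base cases are small $N$, where the chain consists of isomorphisms or there are no centers because $k\le s-2$ forces $s\ge2$. In the base cases the equivalence is trivial, so the case $n=1$ and the value $N=0$ need no separate argument. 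The second assertion of the proposition is the byproduct of this induction, namely the claim for all $\bcM_{g,r|s}$ with $r+s\le n$.

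The main obstacle is the ``only if'' direction of the blowup equivalence: deducing the Hodge conjecture for the base $Y$ (or for the centers) from that of $\tilde Y$. The step I would lean on is that the decomposition is a direct sum of Hodge structures with compatible Chow decomposition. A Hodge class $\alpha$ on $Y$ pulls back to a Hodge class $\pi^*\alpha$ on $\tilde Y$. That class is algebraic, say equal to $\mathrm{cl}(\gamma)$. Then $\alpha=\pi_*\pi^*\alpha=\mathrm{cl}(\pi_*\gamma)$ is algebraic, since $\pi_*\pi^*=\mathrm{id}$. In our induction this direction is only needed for $Y$, because the centers are handled by the inductive hypothesis. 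One more point requires care: that Hassett spaces, being smooth proper DM stacks, obey the blowup formulas with $\QQ$-coefficients. For this I would pass to coarse spaces, which have quotient singularities, and use Vistoli's rational Chow groups for stacks.
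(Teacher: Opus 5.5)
Your proposal is correct and uses the same two ingredients as the paper. The blowup decompositions of Chow groups and cohomology, compatible with the cycle class map, let you climb the chain of Theorem \ref{9} by induction on $r$ and $s$, and the identity $\pi_*\pi^*=\mathrm{id}$ for birational morphisms lets you descend it. The only difference is organizational: the paper applies the latter once to the composite birational morphism $\rho:\bcM_{g,m}\to\bcM_{g,(0^+)^m}$, so the direction from maximal to minimal weights needs no induction, whereas you apply it step by step along the chain.
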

\begin{proof}
    Recall that the Hodge conjecture is about the surjectivity of the cycle class map $$A^k(X)_\QQ\lra H^{2k}(X)\cap H^{k,k}(X;\CC).$$
    Suppose that it holds for $\bcM_{g,n}$. For any $\xi\in H^{2k}(\bcM_{g,(0^+)^n})_\QQ\cap H^{k,k}(\bcM_{g,(0^+)^n})$, its pullback by the natural morphism \eqref{11} must be a Hodge class which is the image of some $\eta\in A^k(\bcM_{g,n})_\QQ$. Since $\rho$ is birational, we have $\rho_*\rho^*\xi=\xi$, so the cycle class of $\rho_*\eta\in A^k(\bcM_{g,(0^+)^n})_\QQ$ is $\xi$.

Conversely, suppose that the Hodge conjecture holds for $\bcM_{g,(0^+)^m}$ for all $m\le n$. Recall that we have the blowup decomposition for Chow groups \cite[\S6.7]{Fulton} as well as the ordinary cohomology groups \cite[p606]{GrHa} and the cycle class map preserves the decomposition. By induction on $r$ and $s$ with the blowup construction in Theorem \ref{9}, we find that the Hodge conjecture holds for $\bcM_{g,r|s}$ whenever $r+s\le n$. In particular, it holds for any $\bcM_{g,m}=\bcM_{g,m|0}$ with $m\le n$. The second statement follows from the same induction.
\end{proof}
\begin{coro}\label{HC}
    The Hodge conjecture holds for $\bcM_{1,r|s}$ for all $r,s$ and for $\bcM_{2,r|s}$ for $r+s\le 19$.
\end{coro}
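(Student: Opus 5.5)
By Proposition \ref{10}, it is enough to verify the Hodge conjecture for the minimally weighted spaces $\bcM_{g,(0^+)^m}$ with $m\le n$. For the second assertion of that proposition, we also need the relevant range of $m$ for the Knudsen--Deligne--Mumford spaces. So the plan is to show that every Hodge class of the relevant spaces is algebraic, using known results on the cohomology of $\bcM_{g,m}$.

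\emph{Genus one.} Petersen \cite{Pet0} proved that the even degree cohomology of $\bcM_{1,m}$ is tautological for every $m$. Hence $H^{2k}(\bcM_{1,m})$ is spanned by algebraic cycle classes, namely push-forwards of products of $\psi$-classes from boundary strata. In particular every Hodge class is algebraic, since the whole of $H^{2k}$ consists of algebraic classes. The Hodge conjecture therefore holds for $\bcM_{1,m}$ for all $m$. Proposition \ref{10}, with $g=1$ and $n$ arbitrary, then gives the claim for all $\bcM_{1,r|s}$.

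\emph{Genus two.} By \cite{Pet2}, the even degree cohomology of $\bcM_{2,m}$ is tautological for $m<20$, that is, for $m\le 19$. The same argument shows that the Hodge conjecture holds for $\bcM_{2,m}$ for all $m\le 19$. Applying Proposition \ref{10} with $g=2$ and $n=19$, it holds for $\bcM_{2,r|s}$ whenever $r+s\le 19$.

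The main point to check is that Proposition \ref{10} was stated for $g,n\ge1$ and relies on Theorem \ref{9}, which is stated for $g\ge1$. So both genus one and genus two are within its scope, and the induction on $(r,s)$ through the blowups applies. One should also note that the Hodge conjecture is meant for the coarse moduli spaces with $\QQ$-coefficients. For smooth proper Deligne--Mumford stacks this is harmless, because rational cohomology and rational Chow groups agree with those of the coarse spaces. The only real input is the cited tautological results. I expect no obstacle beyond quoting the ranges precisely: all $m$ in genus one, and $m\le19$ in genus two.
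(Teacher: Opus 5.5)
Your argument is correct and is the same as the paper's: Petersen's results give the Hodge conjecture for $\bcM_{1,m}$ for all $m$ and for $\bcM_{2,m}$ for $m\le 19$, because their even degree cohomology is tautological and hence algebraic, and the second assertion of Proposition~\ref{10} then gives it for $\bcM_{g,r|s}$ in the stated ranges. Your opening remark about first reducing to the minimally weighted spaces is not needed, since you, like the paper, apply Proposition~\ref{10} directly to the Knudsen--Deligne--Mumford spaces.
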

\begin{proof}
    The even degree cohomology of $\bcM_{1,n}$ is tautological for all $n$ by \cite{Pet0}, and that of $\bcM_{2,n}$ is tautological for $n< 20$ by \cite{Pet2}. Since tautological classes are algebraic, the Hodge conjecture holds for these spaces. Now apply Proposition \ref{10}.
\end{proof}

\subsection{The equivariant change of variables}\label{S3.3}
The blowup centers in Theorem \ref{9} are not preserved by $\bS_n$, so the argument above does not give the $\bS_n$-equivariant statement. We use instead the stratification by rational tails.

For $g\ge 2$ and $m\ge 0$, let $\cM^{\circ}_{g,m}\subset\bcM_{g,m}$ be the open substack of stable curves without rational tails, i.e.\ without a connected subcurve of arithmetic genus zero meeting the rest of the curve in exactly one point. By the description of $\bcM_{g,(0^+)^m}$ above (cf.\ \cite[Lemma 2.7]{KSY}), $\cM^\circ_{g,m}$ is also the open substack of $\bcM_{g,(0^+)^m}$ where the $m$ points are pairwise distinct. Let $\cU_g$ be the $\bS$-space $m\mapsto\cM^\circ_{g,m}$, let $\overline{\cT}$ be the $\bS$-space with $\overline{\cT}(0)=\emptyset$, $\overline{\cT}(1)=\mathrm{pt}$ and $\overline{\cT}(k)=\bcM_{0,k+1}$ for $k\ge2$, with $\bS_k$ permuting the first $k$ points, and let $\mathcal E_{+}$ be the $\bS$-space with $\mathcal E_{+}(0)=\emptyset$ and $\mathcal E_{+}(k)=\mathrm{pt}$ for $k\ge1$, so that $\HH^{\bS}_{\mathcal E_+}=\bh(1)-1$. We write $\cX_g$ and $\cY_{g}$ for the $\bS$-spaces $n\mapsto\bcM_{g,n}$ and $n\mapsto\bcM_{g,(0^+)^n}$
so that 
\beq
\HH^{\bS}_{\cX_g}=\sum_{n\ge 0}\HH^{\bS}_{\bcM_{g,n}}=:\cH^{\bS}_g \quad \text{and}\quad \HH^{\bS}_{\cY_g}=\sum_{n\ge 0}\HH^{\bS}_{\bcM_{g,(0^+)^n}}=:\cL^{\bS}_g
.\eeq

\begin{prop}\label{tails}
For $g\ge 2$, in the Grothendieck ring of $\bS$-spaces,
\[
[\cX_g]=[\cU_g\pl\overline{\cT}],\qquad [\cY_g]=[\cU_g\pl\mathcal E_{+}].
\]
\end{prop}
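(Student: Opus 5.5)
We prove both identities by stratifying $\bcM_{g,n}$ and $\bcM_{g,(0^+)^n}$ according to how the $n$ markings are grouped, and identifying each stratum with a product. The groupings come from the stabilization (resp.\ coincidence) pattern. The stratifications are $\bS_n$-stable, so the pieces assemble into a species composition. By additivity of classes in the Grothendieck ring of $\bS$-spaces (which is what Lemma \ref{strata} and \eqref{lambdaring} encode at the level of $\HH^{\bS}$), it suffices to exhibit $\bS_n$-equivariant locally closed decompositions whose strata are equivariantly isomorphic to the pieces of the compositions.

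For $\cX_g$, we use the rational tail contraction. Given a stable curve $(C,p_1,\dots,p_n)$ of genus $g\ge2$, contracting all rational tails gives a curve $C^\circ$ with no rational tails, and each marking lands at a point of $C^\circ$. Group the markings into the fibers of the induced map $[n]\to C^\circ$; say the partition is $[n]=I_1\sqcup\dots\sqcup I_m$. The images of the blocks are $m$ distinct smooth points $q_1,\dots,q_m$ of $C^\circ$, and $(C^\circ,q_1,\dots,q_m)$ is stable without rational tails (the case $m=0$ is allowed since $g\ge2$), i.e.\ a point of $\cM^\circ_{g,m}$. Over $q_j$ the original curve carries either the single marking (if $|I_j|=1$) or a rational tail attached at $q_j$ with markings $I_j$. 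A rational tail is a stable tree of rational curves with $|I_j|+1$ special points, i.e.\ a point of $\bcM_{0,|I_j|+1}$ with the attaching point as the last marking. This is exactly $\overline{\cT}(I_j)$, with $\overline{\cT}(1)=\mathrm{pt}$ accounting for the unattached case.

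Conversely, gluing recovers the curve, so the locus with a given unordered partition into $m$ blocks is isomorphic to $\big(\cM^\circ_{g,m}\times\prod_j\overline{\cT}(I_j)\big)/\bS_m$. Here $\bS_m$ reorders the blocks together with the labels $q_j$; the quotient is free, since the blocks are distinct subsets. Summing over set partitions gives precisely $(\cU_g\pl\overline{\cT})(n)$, equivariantly for $\bS_n$.

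For $\cY_g$, we use the analogous argument with the coincidence partition: the markings of a $(0^+)^n$-stable curve are grouped by equality of points. The distinct points $q_1,\dots,q_m$ then give a point of $\cM^\circ_{g,m}$, using the description above that $\cM^\circ_{g,m}$ is the locus of pairwise distinct points. The block data is trivial, namely a point of $\mathcal E_+(I_j)$.

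The main obstacle is justifying that these bijections on points are isomorphisms of the strata as stacks: the rational tail contraction must work in families over each stratum, and the strata must be locally closed. For $\cY_g$ this is clear, since the coincidence strata are intersections of diagonals (closed) minus smaller diagonals. For $\cX_g$ I would cite the morphism $\rho$ of \eqref{11}, which performs exactly the contraction in families. The stratum is then the preimage under $\rho$ of the corresponding coincidence stratum of $\cY_g$, and the fibers over the $q_j$ are products of $\bcM_{0,|I_j|+1}$ (cf.\ \cite[Lemma 2.7]{KSY}). Since only classes in the Grothendieck ring matter, it suffices that the stratum maps to $\big(\cM^\circ_{g,m}\times\prod\overline{\cT}(I_j)\big)/\bS_m$ by a morphism that is bijective on geometric points with isomorphic automorphism groups. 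For Hodge-Deligne classes, even a universally bijective morphism would suffice.
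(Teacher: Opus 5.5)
Your proposal is correct and follows essentially the same route as the paper, which is the argument of \cite[Propositions 4.6 and 4.7]{KSY}. You stratify $\bcM_{g,n}$ by the set partition cut out by the maximal rational tails and $\bcM_{g,(0^+)^n}$ by the coincidence partition, then use the gluing maps to identify each stratum with the corresponding piece of $\cU_g\pl\overline{\cT}$, respectively $\cU_g\pl\mathcal E_{+}$. Your justification that the strata of $\cX_g$ are locally closed, as preimages under $\rho$ of the diagonal strata of $\cY_g$, is a detail the paper leaves implicit.
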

\begin{proof}
Every stable $n$-pointed curve of genus $g\ge2$ is obtained uniquely from its core, obtained by contracting all maximal rational tails to points, as follows: the maximal rational tails define a set partition of $[n]$, the singleton blocks being the marked points on the core, and the core with one marked point for each block is a point of $\cM^\circ_{g,m}$; each block $B$ with $|B|\ge2$ carries a rational tail, which is a point of $\bcM_{0,|B|+1}$. The gluing maps identify the locally closed stratum with a given block structure with the corresponding quotient of a product, compatibly with $\bS_n$, which proves the first identity (this is the argument of \cite[Proposition 4.6]{KSY} with $m=0$). For the second, stratify $\bcM_{g,(0^+)^n}$ by the partition of $[n]$ into blocks of coincident points; the stratum with $m$ blocks is identified with $\cM^\circ_{g,m}$ with the blocks as labels (cf.\ \cite[Proposition 4.7]{KSY}).
\end{proof}

Let $\overline{\mathsf b}=\HH^{\bS}_{\overline{\cT}}=h_1+\sum_{k\ge2}\HH^{\bS}_{\bcM_{0,k+1}}$ and $\mathsf m=\sum_{k\ge2}\HH^{\bS}_{\cM_{0,k+1}}$, where in both cases $\bS_k$ fixes the last point.

\begin{lemm}\label{Zclosed}
\begin{enumerate}[label=(\alph*)]
\item $\overline{\mathsf b}=h_1+\mathsf m\pl\overline{\mathsf b}$, so $\overline{\mathsf b}$ is the plethystic inverse of $h_1-\mathsf m$.
\item $h_1-\mathsf m=\dfrac{x^2h_1-\mathsf F+1}{x(x-1)}$ where $\mathsf F=\sum_{k\ge0}\HH^{\bS}_{F(\CC,k)}=\prod_{k\ge1}(1+p_k)^{\frac1k\sum_{d|k}\mu(k/d)x^d}$ is the series of the configuration spaces $F(\CC,k)$ of $k$ distinct points in $\CC$.
\item $(h_1-\mathsf m)\pl(\bh(1)-1)=Z$, with $Z$ in \eqref{defZ}.
\end{enumerate}
\end{lemm}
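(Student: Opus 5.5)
For (a), I will stratify $\bcM_{0,k+1}$, with the last point as root, by the root component. Every stable $(k+1)$-pointed rational curve has a unique component containing the root point $\infty$. Suppose this component carries $j\ge2$ further special points; each is either a marked point or a node leading to a rational tail. Then the curve is a point of $\cM_{0,j+1}$ glued to $j$ pieces, and each piece is either a single marked point or a stable rational tree with its own root. The index set $[k]$ is partitioned into $j$ blocks accordingly.

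The same gluing argument as in Proposition \ref{tails} identifies $\overline{\cT}$ with $h_1$-part plus $\cM_{0,\bullet+1}\pl\overline{\cT}$ in the Grothendieck ring of $\bS$-spaces, $\bS_j$-equivariantly. Applying \eqref{lambdaring} then gives $\overline{\mathsf b}=h_1+\mathsf m\pl\overline{\mathsf b}$. Rewriting this as $(h_1-\mathsf m)\pl\overline{\mathsf b}=h_1$ shows that $\overline{\mathsf b}$ is a right plethystic inverse of $h_1-\mathsf m$. Since $h_1-\mathsf m=h_1+(\text{degree}\ge2)$, it is a two-sided inverse by the usual formal argument.

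For (b), I will use the trivial principal bundle $F(\CC,k)\to \cM_{0,k+1}$ under the affine group $\mathrm{Aff}=\CC\rtimes\CC^*$. This holds for $k\ge2$, with $\bS_k$ acting compatibly: fix $\infty$, then use translations and dilations to normalize two points. The group $\mathrm{Aff}$ is connected, so it acts trivially on cohomology. Hence $\HH^\sigma_{F(\CC,k)}=x(x-1)\HH^\sigma_{\cM_{0,k+1}}$, as in the proof of Lemma \ref{Cstar}. This gives $x(x-1)\mathsf m=\mathsf F-1-xh_1$, using $F(\CC,0)=\mathrm{pt}$ and $F(\CC,1)=\CC$ with $\HH^{\bS}=xh_1$. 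Therefore
\[h_1-\mathsf m=\frac{x(x-1)h_1-\mathsf F+1+xh_1}{x(x-1)}=\frac{x^2h_1-\mathsf F+1}{x(x-1)}.\]
The product formula for $\mathsf F$ is Getzler's formula, which I will cite. The main point to check in this step is that equivariant traces are multiplicative for the trivial bundle, which holds by Künneth.

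For (c), plethysm is additive and multiplicative in the first argument, so I will substitute $g=\bh(1)-1$ into the expression from (b). This gives $h_1\pl g=\bh(1)-1$ and $1\pl g=1$. Since $x=uv$ is a coefficient of the first argument, it is left alone. It remains to show $\mathsf F\pl(\bh(1)-1)=\bh(x)$.

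This last identity is the main obstacle. I will first try it via the product formula: $\mathsf F=\exp\big(\sum_k\sum_{d|k}\mu(k/d)x^d\sum_m(-1)^{m-1}p_k^m/(mk)\big)$. Plethysm with $g$ sends $p_k$ to $p_k\pl g=\exp(\sum_r p_{kr}/r)-1$. Then $\log(1+p_k\pl g)=\sum_r p_{kr}/r$, so
\[\log\big(\mathsf F\pl g\big)=\sum_k\frac1k\sum_{d|k}\mu(k/d)x^d\sum_r\frac{p_{kr}}{r}.\]
Here I have to be careful that plethysm acts on $x^d$. The coefficients $x^d$ in $\mathsf F$ belong to $\mathsf F$, the outer function, so they are left alone. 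The sum $\sum_{d|k}\mu(k/d)x^d$ arises from applying $\psi$ to $x$ in the $\lambda$-ring, but once $\mathsf F$ is written out it is an honest coefficient. Collecting the coefficient of $p_N/N$ gives $\sum_{kr=N}\sum_{d|k}\mu(k/d)x^d$. Möbius inversion over divisors then gives $\sum_{d|N}x^d\sum_{e|N/d}\mu(e)=x^N$. Hence $\mathsf F\pl g=\exp(\sum_N x^Np_N/N)=\bh(x)$. Combining, $(h_1-\mathsf m)\pl(\bh(1)-1)=\dfrac{x^2(\bh(1)-1)-\bh(x)+1}{x(x-1)}=Z$.

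Conceptually, this identity says that points in $\CC$ allowed to collide, $\Sym$-wise, form $\CC^n/\bS_n$, which has class $h_n[x]$. I would use that as a sanity check.
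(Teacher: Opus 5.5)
Your proposal is correct and follows the paper's proof essentially step by step: for (a), the stratification of $\bcM_{0,k+1}$ by the component containing the root; for (b), the trivial principal $\mathrm{Aff}_1$-bundle $F(\CC,k)\to\cM_{0,k+1}$; and for (c), the substitution $1+p_k\mapsto p_k\pl\bh(1)=\exp(\sum_m p_{km}/m)$ in Getzler's product with the exponents left unchanged, followed by M\"obius inversion. The one step to tighten is in (b): the trivialization normalizing $z_1=0$, $z_2=1$ is not $\bS_k$-equivariant (in these coordinates $\sigma$ acts by a map $(m,a)\mapsto(\sigma m, a\,c_\sigma(m))$ with $c_\sigma$ not necessarily null-homotopic), so multiplicativity of the traces should come from the Leray spectral sequence rather than a bare K\"unneth decomposition, as in the paper, since there $\sigma$ acts on $E_2=H^*_c(\cM_{0,k+1})\otimes H^*_c(\mathrm{Aff}_1)$ trivially on the second factor because $\mathrm{Aff}_1$ is connected.
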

\begin{proof}
(a) Stratify $\bcM_{0,k+1}$ by the open stratum $\cM_{0,j+1}$ of the component containing the last point. Each of its other $j\ge2$ special points is either a marked point or a node to a subtree, which is a point of $\bcM_{0,|B|+1}$ with $B$ the set of labels on it. This is the identity $[\overline{\cT}]=[\mathcal E_1]+[\mathcal M\pl\overline{\cT}]$ of $\bS$-spaces, where $\mathcal E_1$ is a point in degree $1$ and $\mathcal M$ is $k\mapsto\cM_{0,k+1}$ for $k\ge2$, cf.\ \cite{Get95a}. By \eqref{lambdaring}, $(h_1-\mathsf m)\pl\overline{\mathsf b}=h_1$; since both series begin with $h_1$, $\overline{\mathsf b}$ is a two-sided plethystic inverse.

(b) The formula for $\mathsf F$ is Getzler's \cite{Get95b}, since $\HH_{\CC}=x$. For $k\ge2$, the morphism $F(\CC,k)\to\cM_{0,k+1}$ sending a configuration to $(\PP^1,z_1,\dots,z_k,\infty)$ is a principal bundle under the affine group $\mathrm{Aff}_1\cong\CC\rtimes\CC^*$, which is trivial (normalize $z_1=0$, $z_2=1$), connected and commutes with $\bS_k$. Hence $\HH^{\sigma}_{F(\CC,k)}=x(x-1)\HH^{\sigma}_{\cM_{0,k+1}}$ for all $\sigma\in\bS_k$, by the Leray spectral sequence, whose $E_2$-term is $H^*_c(\cM_{0,k+1})\otimes H^*_c(\mathrm{Aff}_1)$ with $\sigma$ acting trivially on the second factor. Since $\HH^{\bS}_{F(\CC,0)}=1$ and $\HH^{\bS}_{F(\CC,1)}=xh_1$, we get $\mathsf m=(\mathsf F-1-xh_1)/(x(x-1))$.

(c) We have $h_1\pl(\bh(1)-1)=\bh(1)-1$ and $(1+p_k)\pl(\bh(1)-1)=p_k\pl\bh(1)=\exp(\sum_m p_{km}/m)$. Since the exponents in $\mathsf F$ stay unchanged,
\[
\mathsf F\pl(\bh(1)-1)=\exp\Big(\sum_{N\ge1}\frac{p_N}{N}\sum_{k|N}\sum_{d|k}\mu(k/d)x^d\Big)=\exp\Big(\sum_N\frac{x^Np_N}{N}\Big)=\bh(x)
\]
by M\"obius inversion. Substituting into (b) gives the first expression for $Z$; the second follows from $\frac{x^2-x^k}{x(x-1)}=-\frac{x(x^{k-2}-1)}{x-1}$ for $k\ge 3$, while the coefficient of $h_1$ is $1$ and that of $h_2$ is $0$.
\end{proof}

\begin{theo}\label{A}
For $g\ge2$, $\cL^{\bS}_g=\cH^{\bS}_g\pl Z$, with $Z$ in \eqref{defZ}.
\end{theo}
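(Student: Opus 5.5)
Combine Proposition \ref{tails} with Lemma \ref{Zclosed}, using that $\HH^{\bS}$ turns composition of $\bS$-spaces into plethysm by \eqref{lambdaring}.

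First, applying $\HH^{\bS}$ to the two identities of Proposition \ref{tails} gives
\[
\cH^{\bS}_g=\HH^{\bS}_{\cU_g}\pl\overline{\mathsf b},\qquad \cL^{\bS}_g=\HH^{\bS}_{\cU_g}\pl(\bh(1)-1).
\]
Here $\overline{\mathsf b}=\HH^{\bS}_{\overline\cT}$ and $\HH^{\bS}_{\mathcal E_+}=\bh(1)-1$. Both inner series have no constant term, since $\overline\cT(0)=\mathcal E_+(0)=\emptyset$, so the plethysms are defined. They are also compatible with the gradings, so all sums involved are finite in each degree of $\Lam$.

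Next, I eliminate $\HH^{\bS}_{\cU_g}$. By Lemma \ref{Zclosed}(a), $\overline{\mathsf b}$ is the two-sided plethystic inverse of $h_1-\mathsf m$, so
\[
\HH^{\bS}_{\cU_g}=\HH^{\bS}_{\cU_g}\pl\big(\overline{\mathsf b}\pl(h_1-\mathsf m)\big)=\cH^{\bS}_g\pl(h_1-\mathsf m).
\]
This step uses associativity of plethysm, $(f\pl g)\pl k=f\pl(g\pl k)$ when $g$ and $k$ have no constant term. Substituting into the second display and using associativity again,
\[
\cL^{\bS}_g=\cH^{\bS}_g\pl\big((h_1-\mathsf m)\pl(\bh(1)-1)\big)=\cH^{\bS}_g\pl Z,
\]
where the last equality is Lemma \ref{Zclosed}(c).

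The only point needing care is the associativity and well-definedness of plethysm in the completed ring $\Lam$ with coefficients in $\QQ[u,v]$. The coefficient ring carries the Adams operations $\psi^k$, so associativity is not purely formal. I would verify it on generators: $p_k\pl(g\pl h)=(p_k\pl g)\pl h$ holds because $p_k\pl-$ is the ring endomorphism combining $\psi^k$ on coefficients with $p_m\mapsto p_{km}$, and these commute with substitution. Both sides are additive and multiplicative in the outer argument, so this extends to all of $\Lam$.

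Convergence needs $g,h$ without constant term. Here $h_1-\mathsf m$, $\overline{\mathsf b}$ and $\bh(1)-1$ all have zero constant term, so everything is legitimate. For the equivalent form $\cH^{\bS}_g=\cL^{\bS}_g\pl Z^{\langle-1\rangle}$, note that $Z$ begins with $h_1$, so it has a plethystic inverse, which in fact equals $\mathsf e\pl\overline{\mathsf b}$, with $\mathsf e$ the plethystic inverse of $\bh(1)-1$.
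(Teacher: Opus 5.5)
Your proposal is correct and follows the paper's proof essentially verbatim. Like the paper, you apply \eqref{lambdaring} to Proposition~\ref{tails}, eliminate $\HH^{\bS}_{\cU_g}$ using the two-sided inverse from Lemma~\ref{Zclosed}(a), and conclude by associativity of plethysm together with Lemma~\ref{Zclosed}(c). Your justification of associativity via $p_k\pl(p_m\pl h)=p_{km}\pl h$ (from $\psi^k\psi^m=\psi^{km}$) is sound, as is the identification $Z^{\langle-1\rangle}=\mathsf e\pl\overline{\mathsf b}$; the paper leaves both implicit.
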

\begin{proof}
By Proposition \ref{tails} and \eqref{lambdaring}, $\cH^{\bS}_g=\HH^{\bS}_{\cU_g}\pl\overline{\mathsf b}$ and $\cL^{\bS}_g=\HH^{\bS}_{\cU_g}\pl(\bh(1)-1)$. By Lemma \ref{Zclosed} (a), $\HH^{\bS}_{\cU_g}=\cH^{\bS}_g\pl(h_1-\mathsf m)$, so $\cL^{\bS}_g=\cH^{\bS}_g\pl(h_1-\mathsf m)\pl(\bh(1)-1)=\cH^{\bS}_g\pl Z$ by Lemma \ref{Zclosed} (c) and the associativity of plethysm.
\end{proof}

\begin{rema}\label{g1}
Proposition \ref{tails}, Theorem \ref{A} and Proposition \ref{7} also hold for $g=1$,
with the convention that the series $\cH^{\bS}_1$, $\cL^{\bS}_1$, $\cH_1$ and $\cL_1$
start with $n=1$, since $\bcM_{1,0}$ and $\bcM_{1,(0^+)^0}$ do not exist. 
The only point in the proof of Proposition \ref{tails} where $g\ge2$ is used is that every stable curve has a nonempty core. For $g=1$ and $n\ge1$, the core of a stable $n$-pointed curve is a smooth elliptic curve or a cycle of rational curves carrying at least one block of marked points, hence a point of $\cM^\circ_{1,m}$ with $m\ge1$. Likewise, the locus of
$\bcM_{1,(0^+)^n}$ where the marked points are pairwise distinct is $\cM^\circ_{1,n}$.
Hence $[\cX_1]=[\cU_1\pl\overline{\cT}]$ and $[\cY_{1}]=[\cU_1\pl\mathcal E_+]$
with $\cU_1(0)=\emptyset$, $\cX_1(0)=\emptyset$ and $\cY_1(0)=\emptyset$. Since Lemma \ref{Zclosed} does not involve $g$, the proof of
Theorem \ref{A} applies verbatim and gives $\cL^{\bS}_1=\cH^{\bS}_1\pl Z$. Taking the rank
gives $\cL_1(y)=\cH_1(z(y))$, which also follows from Theorem \ref{9}, as Theorem \ref{9}
holds for $g\ge1$. 
\end{rema}

\begin{rema}\label{Arem}
(1) Since $Z$ has no constant term and $\rk(h_k)=y^k/k!$, Lemma \ref{rankmap} shows that the image of Theorem \ref{A} under the rank map is $\cL_g(y)=\cH_g(z(y))$ with $z(y)=\rk(Z)$ as in \eqref{5}. This gives a second, blowup-free proof of Proposition \ref{7}. Likewise, Lemma \ref{Zclosed} becomes $z=\frac{x^2\varphi-(1+\varphi)^x+1}{x(x-1)}$ with $\varphi=\rk(\bh(1)-1)=e^y-1$, and $\rk(\overline{\mathsf b})=\varphi(z)$, which explains Remark \ref{Manin}.

(2) Computationally, $Z^{\langle-1\rangle}$ is obtained from the fixed point iteration $W\mapsto h_1+(h_1-Z)\pl W$, which converges degree by degree since $h_1-Z$ starts in degree $3$, and $\overline{\mathsf b}=\exp\big(\sum_k\frac1k p_k\pl Z^{\langle-1\rangle}\big)-1$ recovers the $\bS_k$-equivariant Hodge polynomials of $\bcM_{0,k+1}$ \cite{Get95a}. For instance, $\HH^{\bS}_{\bcM_{0,5}}=(1+2x+x^2)h_4+x\,s_{31}$.
\end{rema}

\bigskip
\section{Moduli stacks of minimally weighted pointed curves}\label{S4}
In this section, for $g\ge 2$, we stratify $\bcM_g$ by using stable graphs $\Gamma$ and then consider the inverse images of the strata $\cM_\Gamma$ by the forgetful morphism
\beq\label{17}
\Phi_{g,n}: \bcM_{g,(0^+)^n}=\bcM_{g,0|n}\lra \bcM_g .\eeq
Adding up the twisted characteristics of $\Phi_{g,n}^{-1}(\cM_\Gamma)$, we obtain a formula for $\cL^{\bS}_g$ (Theorem \ref{graphsum}) and its numerical form (Corollary \ref{24}).

\subsection{Stable graphs and the strata of $\bcM_{g,(0^+)^n}$}
A graph $\Gamma$ equipped with an integer $g_v\ge 0$ for each vertex $v\in V(\Gamma)$ is called \emph{stable} if $2g_v-2+d_v>0$ holds for each vertex, where $d_v$ denotes the valency at the vertex $v$. We regard an edge as a pair of half-edges, so that $\Aut(\Gamma)$ acts on the set of half-edges. In particular, a loop at $v$ contributes two half-edges at $v$, and $d_v$ is the number of half-edges at $v$. The genus of $\Gamma$ is
$$g=\frac12\sum_v (2g_v-2+d_v)+1.$$
Let $\cG_g$ be the set of all stable graphs of genus $g$. It is well known that we have the stratification
\beq\label{18}
\bcM_g=\bigsqcup_{\Gamma\in \cG_g} \cM_\Gamma, \quad \cM_\Gamma=\prod_{v\in V(\Gamma)}\cM_{g_v,d_v}/\Aut(\Gamma).
\eeq

When $\Gamma=\varnothing$ is the smooth graph, i.e.\ the graph with only one vertex of genus $g$ and no edges, $\cM_\varnothing=\cM_g$ and $\Phi_{g,n}^{-1}(\cM_g)$ is the $n$-fold fiber product
$$\cC_g^{(n)}:=\cC_g\times_{\cM_g}\cdots\times_{\cM_g}\cC_g$$
of the universal curve $q_g:\cC_g\to \cM_g$, with $\bS_n$ permuting the factors. By the relative K\"unneth theorem, we have
\beq\label{19}
R(\Phi_{g,n}|_{\Phi_{g,n}^{-1}(\cM_g)})_*\QQ=(R{q_g}_*\QQ)^{\otimes n}=\left(\QQ\oplus \QQ(-1)[-2]\oplus \V_g[-1]\right)^{\otimes n},\quad \V_g:=R^1{q_g}_*\QQ.\eeq

For the boundary strata, there are nodes and we may insert the Losev-Manin spaces
$$L_k:=\bcM_{0,2|k}$$ at nodes. We let $L_0$ be a point. 
For a stable graph $\Gamma\in \cG_g$ and a vertex $v\in V(\Gamma)$, we let
$$\mathring{\cC}_v:=\cM_{g_v,d_v+1}\lra \cM_{g_v,d_v}=:\cM_v$$
be the universal curve with the $d_v$ universal sections deleted, and
\[ \mathring{\cC}_v^{(k)}=\mathring{\cC}_v\times_{\cM_v}\cdots\times_{\cM_v}\mathring{\cC}_v\]
its $k$-fold fiber product. Let $E(\Gamma)$ denote the set of edges in $\Gamma$. Then we have
\beq\label{50}
\Phi_{g,n}^{-1}(\cM_\Gamma)= \Big(\bigsqcup_{I_\cdot}\ \prod_{v\in V(\Gamma)}\mathring{\cC}_v^{(|I_v|)}\times \prod_{e\in E(\Gamma)}L_{|I_e|}\Big)\Big/\Aut(\Gamma)\eeq
where the disjoint union runs over all decompositions $\{1,2,\cdots,n\}=(\bigsqcup_{v\in V(\Gamma)}I_v)\sqcup (\bigsqcup_{e\in E(\Gamma)}I_e)$, which are permuted by $\Aut(\Gamma)$. Indeed, the light points on the component corresponding to $v$ form a point of $\mathring{\cC}_v^{(|I_v|)}$ since they may coincide with each other but not with the nodes, and the light points which approach the node corresponding to $e$ lie on a chain of rational bridges inserted at the node, parametrized by $L_{|I_e|}$. The decomposition \eqref{50} is compatible with the action of $\bS_n$ on the labels. In the language of \S\ref{S2.2}, the disjoint union over the decompositions $I_\cdot$ is a species product, so that we have an isomorphism of $\bS$-spaces
\beq\label{speciesprod}
\Phi^{-1}_{g,\bullet}(\cM_\Gamma)=\Big(\prod_{v\in V(\Gamma)}\mathring\cC_v^{(\bullet)}\cdot\prod_{e\in E(\Gamma)}L_\bullet\Big)\Big/\Aut(\Gamma),
\eeq
where $\mathring\cC^{(\bullet)}_v$ is the $\bS$-space $r\mapsto\mathring\cC^{(r)}_v$ and $L_\bullet$ is the $\bS$-space $k\mapsto L_k$. An automorphism $\alpha\in\Aut(\Gamma)$ maps the factor of $v$ to the factor of $\alpha v$, through the bijection between the half-edges at $v$ and at $\alpha v$, and the factor of $e$ to the factor of $\alpha e$, through the flip of the chains if $\alpha$ reverses the orientation of $e$ relative to chosen orientations.

\begin{defi}\label{Adef}
For $\beta\in\bS_d$, which acts on $\cM_{g,d}$ and on the punctured universal curve $\mathring\cC_{g,d}=\cM_{g,d+1}\to\cM_{g,d}$ by relabelling the first $d$ points, let
\beq\label{defA}
A^{\bS,\beta}_{g,d}=\HH^{\bS,\beta}_{\mathring\cC^{(\bullet)}_{g,d}}\in\Lam,\qquad A^{\bS}_{g,d}=A^{\bS,\id}_{g,d},\qquad A^\beta_{g,d}(y)=\rk(A^{\bS,\beta}_{g,d})=\sum_{r\ge0}\HH^\beta_{\mathring\cC^{(r)}_{g,d}}\frac{y^r}{r!}.\eeq 
These depend only on the conjugacy class of $\beta$. Let $\iota$ be the flip of the chains, exchanging the two heavy points of $L_k=\bcM_{0,2|k}$, and let
\[C^{\bS}=\HH^{\bS}_{L_\bullet},\quad D^{\bS}=\HH^{\bS,\iota}_{L_\bullet},\qquad C(y)=\rk(C^{\bS})=\sum_{k\ge0}\HH_{L_k}\frac{y^k}{k!},\quad D(y)=\rk(D^{\bS})=\sum_{k\ge0}\HH^\iota_{L_k}\frac{y^k}{k!}.\]
\end{defi}

\subsection{The graph sum}
For $\alpha\in\Aut(\Gamma)$ and a vertex $v$ whose $\langle\alpha\rangle$-orbit has length $\ell$, let $\alpha^\ell(v)\in\bS_{d_v}$ be the permutation of the half-edges at $v$ induced by $\alpha^\ell$. For an edge $e$ whose orbit has length $\ell$, $\alpha^\ell$ either preserves or flips $e$.

\begin{theo}\label{graphsum}
For $g\ge2$,
\beq\label{23S}
\cL^{\bS}_g=\sum_{\Gamma\in\cG_g}\cL^{\bS}_\Gamma,\qquad
\cL^{\bS}_\Gamma=\frac1{|\Aut\Gamma|}\sum_{\alpha\in\Aut(\Gamma)}\ \prod_{O}p_{\ell(O)}\pl A^{\bS,\alpha^{\ell(O)}(v_O)}_{g_{v_O},d_{v_O}}\ \prod_{P}p_{\ell(P)}\pl B^\alpha_P,
\eeq
where $\cL^{\bS}_\Gamma=\sum_n\HH^{\bS}_{\Phi^{-1}_{g,n}(\cM_\Gamma)}$, $B^\alpha_P=C^{\bS}$ if $\alpha^{\ell(P)}$ preserves the ends of $e_P$ and $B^\alpha_P=D^{\bS}$ if $\alpha^{\ell(P)}$ flips $e_P$, and the products run over the $\langle\alpha\rangle$-orbits $O$ of vertices and $P$ of edges, of lengths $\ell(O)$ and $\ell(P)$, with representatives $v_O\in O$ and $e_P\in P$.
\end{theo}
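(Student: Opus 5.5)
The plan is to stratify, apply Burnside, and then factor the twisted characteristic orbit by orbit. First, the strata $\cM_\Gamma$, $\Gamma\in\cG_g$, give a stratification of $\bcM_g$. Pulling it back by $\Phi_{g,n}$ gives an $\bS_n$-stable stratification of $\bcM_{g,(0^+)^n}$ into locally closed pieces. Motivic additivity of $\HH^{\bS}$ then gives $\cL^{\bS}_g=\sum_\Gamma\cL^{\bS}_\Gamma$.

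For each $\Gamma$, I will use the isomorphism of $\bS$-spaces \eqref{speciesprod}, $\Phi^{-1}_{g,\bullet}(\cM_\Gamma)=\cX_\Gamma/\Aut(\Gamma)$, where
\[
\cX_\Gamma=\prod_v\mathring\cC_v^{(\bullet)}\cdot\prod_eL_\bullet .
\]
The group $\Aut(\Gamma)$ acts on $\cX_\Gamma$ and commutes with the $\bS_n$-actions. One point needs a little care: the stack-theoretic quotient by $\Aut(\Gamma)$ should agree with the actual preimage, in particular when automorphisms act trivially on some factors. This is the same identification as $\cM_\Gamma=\prod_v\cM_{g_v,d_v}/\Aut(\Gamma)$ in \eqref{18}, so I will take it as given with $\QQ$-coefficients. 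Burnside's formula \eqref{Burnside} then gives
\[
\cL^{\bS}_\Gamma=\frac{1}{|\Aut\Gamma|}\sum_\alpha\HH^{\bS,\alpha}_{\cX_\Gamma}.
\]
It remains to compute each twisted characteristic.

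Fix $\alpha$. The species product $\cX_\Gamma$ splits, compatibly with $\alpha$, into the species products $\cX_O$ over the $\langle\alpha\rangle$-orbits $O$ of vertices and $\cX_P$ over the orbits $P$ of edges. I need multiplicativity of twisted characteristics: if $\cX=\cX_1\cdot\cX_2$ and $\alpha=\alpha_1\cdot\alpha_2$, then $\HH^{\bS,\alpha}_{\cX}=\HH^{\bS,\alpha_1}_{\cX_1}\HH^{\bS,\alpha_2}_{\cX_2}$. I would prove this exactly as in Lemma \ref{orbit}. By Lemma \ref{strata}, only the decompositions $[n]=I\sqcup J$ with $\sigma I=I$ contribute to the trace of $\sigma\alpha$. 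On such a stratum, the Künneth formula with Koszul signs makes the trace multiplicative, and $\sigma$ decomposes as $\sigma|_I\times\sigma|_J$, which multiplies the power sums. The count $n!/(|I|!\,|J|!)$ of decompositions matches the factorials. Alternatively, this multiplicativity is \eqref{lambdaring} applied to the group $\bS_n\times\langle\alpha\rangle$.

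Next, for a vertex orbit $O=\{v,\alpha v,\dots,\alpha^{\ell-1}v\}$, I identify each factor with $\mathring\cC_v^{(\bullet)}$ via the maps induced by $\alpha$. Then $\alpha$ becomes the cyclic shift with return map $\gamma=\alpha^\ell$ acting on $\mathring\cC^{(\bullet)}_{g_v,d_v}$. This $\gamma$ relabels the $d_v$ sections by the permutation $\alpha^\ell(v)$ of half-edges, which is exactly the $\beta$-action of Definition \ref{Adef}. Lemma \ref{orbit} therefore gives $p_\ell\pl A^{\bS,\alpha^\ell(v)}_{g_v,d_v}$. The answer depends only on the conjugacy class of $\alpha^\ell(v)$, so it is independent of the representative $v$. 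Edge orbits work the same way, with return map $\alpha^\ell|_{e}$ acting on $L_\bullet$ either trivially or by the flip $\iota$. This gives $p_\ell\pl C^{\bS}$ or $p_\ell\pl D^{\bS}$.

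The main obstacle I expect is the bookkeeping of the identification maps. Two checks are needed. First, the action of $\alpha^\ell$ on $\mathring\cC_v$, obtained by transport along the orbit, must be the relabelling by $\alpha^\ell(v)$ and not, say, its inverse; since the answer is invariant under conjugation and inversion, this should be harmless. Second, the orientation of the chains of rational bridges must compose correctly along the orbit to the flip or non-flip of $e_P$.
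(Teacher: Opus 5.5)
Your proposal is correct and follows the paper's proof essentially step for step. The shared steps are: additivity over the strata $\Phi^{-1}_{g,n}(\cM_\Gamma)$; Burnside's formula \eqref{Burnside} applied to \eqref{speciesprod}; multiplicativity of twisted characteristics over the $\langle\alpha\rangle$-orbits via Lemma \ref{strata} and the K\"unneth formula; and Lemma \ref{orbit} for each orbit, with return map $\alpha^{\ell}(v)$, respectively the identity or the flip $\iota$ on $L_\bullet$.

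Your worry about $\alpha^{\ell}(v)$ versus its inverse is indeed harmless. Since $\beta^{-1}$ is conjugate to $\beta$ in $\bS_d$, and $\bS_d$ acts on $\mathring\cC^{(\bullet)}_{g,d}$ commuting with the $\bS_r$-actions, one has $A^{\bS,\beta^{-1}}_{g,d}=A^{\bS,\beta}_{g,d}$.
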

\begin{proof}
Since the stratification $\bcM_{g,(0^+)^n}=\bigsqcup_\Gamma\Phi^{-1}_{g,n}(\cM_\Gamma)$ is $\bS_n$-stable, $\cL^{\bS}_g=\sum_\Gamma\cL^{\bS}_\Gamma$. By \eqref{speciesprod} and \eqref{Burnside}, $\cL^{\bS}_\Gamma=\frac1{|\Aut\Gamma|}\sum_\alpha\HH^{\bS,\alpha}_{\cX_\Gamma}$ where $\cX_\Gamma$ is the species product in \eqref{speciesprod}. Group the factors of $\cX_\Gamma$ according to the $\langle\alpha\rangle$-orbits of vertices and edges. Then $\cX_\Gamma$ is the species product of $\alpha$-stable pieces, one for each orbit, and the twisted characteristic of a species product of $\alpha$-stable pieces is the product of their twisted characteristics. By Lemma \ref{strata}, only the decompositions $[n]=I\sqcup J$ with $\sigma(I)=I$ contribute to the trace of $\sigma\alpha$, and then the trace is the product by the K\"unneth formula, exactly as for \eqref{lambdaring}.

For an orbit $O=\{v,\alpha v,\dots,\alpha^{\ell-1}v\}$ of vertices, identify the factor of $\alpha^iv$ with $\mathring\cC_v^{(\bullet)}$ through $\alpha^i$. Then $\alpha$ maps the $i$-th factor to the $(i+1)$-st by the identity for $i<\ell-1$ and the last factor to the first by $\alpha^\ell(v)$, acting by relabelling the sections. By Lemma \ref{orbit}, the piece of $O$ contributes $p_\ell\pl\HH^{\bS,\alpha^\ell(v)}_{\mathring\cC_v^{(\bullet)}}=p_\ell\pl A^{\bS,\alpha^\ell(v)}_{g_v,d_v}$. Similarly, an orbit of edges of length $\ell$ contributes $p_\ell\pl C^{\bS}$ or $p_\ell\pl D^{\bS}$ according to whether $\alpha^\ell$ preserves or flips $e_P$.
\end{proof}

Taking the rank, we obtain the numerical form. For $\alpha\in \Aut(\Gamma)$, let $\kappa_+(\alpha)$ (resp.\ $\kappa_-(\alpha)$) be the number of $\alpha$-fixed edges $e$ in $\Gamma$ whose two half-edges are fixed (resp.\ interchanged) by $\alpha$. 
Let $\cL_\Gamma=\rk(\cL^{\bS}_\Gamma)=\sum_n\HH_{\Phi^{-1}_{g,n}(\cM_\Gamma)}\frac{y^n}{n!}$.

\begin{coro}\label{24} For $g\ge 2$,
    \beq\label{23}
    \cL_g(y)=\sum_{\Gamma\in \cG_g}\cL_\Gamma,\quad \cL_\Gamma=\frac1{|\Aut(\Gamma)|}\sum_{\alpha\in \Aut(\Gamma)}
    \prod_{O}\psi^{\ell(O)}\HH^{\alpha^{\ell(O)}(v_O)}_{\cM_{v_O}}\;
    \prod_{\alpha v=v}A^{\alpha(v)}_{g_v,d_v}(y)\;
    C(y)^{\kappa_+(\alpha)}D(y)^{\kappa_-(\alpha)},\eeq
    where $O$ runs over the orbits of $\langle\alpha\rangle$ on $V(\Gamma)$ of length $\ell(O)\ge 2$ and $v_O\in O$.
\end{coro}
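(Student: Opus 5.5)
We apply the rank map to the formula \eqref{23S} of Theorem \ref{graphsum}, which is a ring homomorphism, so it suffices to compute the rank of each factor $p_{\ell}\pl A^{\bS,\beta}_{g_v,d_v}$ and $p_\ell\pl B^\alpha_P$. We do this using Lemma \ref{rankmap}. For orbits of length $\ell=1$, $\rk(p_1\pl g)=\rk(g)$. Hence a fixed vertex contributes $A^{\alpha(v)}_{g_v,d_v}(y)$, and a fixed edge contributes $C(y)$ or $D(y)$, according to whether $\alpha$ fixes or interchanges its two half-edges. This gives the factor $C(y)^{\kappa_+(\alpha)}D(y)^{\kappa_-(\alpha)}$. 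Note that for $\ell=1$, ``$\alpha^{\ell}$ flips $e_P$'' is exactly the condition counted by $\kappa_-$.

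For orbits of length $\ell\ge2$, Lemma \ref{rankmap} gives $\rk(p_\ell\pl g)=\psi^\ell(g(0))$, where $g(0)$ is the constant (degree zero) term of $g$.

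For an edge orbit, $g(0)$ is the degree zero part of $C^{\bS}$ or $D^{\bS}$. This is the equivariant Hodge-Deligne polynomial of $L_0$, a point, with either trivial action or the flip. In both cases it equals $1$, so such orbits contribute $1$.

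For a vertex orbit, the degree zero part of $A^{\bS,\beta}_{g_v,d_v}$ is $\HH^\beta_{\mathring\cC^{(0)}_{g_v,d_v}}$. The $0$-fold fibered power is the base $\cM_{g_v,d_v}=\cM_{v}$, so this equals $\HH^{\beta}_{\cM_v}$ with $\beta=\alpha^{\ell}(v_O)$. The orbit therefore contributes $\psi^{\ell(O)}\HH^{\alpha^{\ell(O)}(v_O)}_{\cM_{v_O}}$.

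Multiplying these factors and averaging over $\alpha$ gives \eqref{23}, and summing over $\Gamma$ gives $\cL_g=\rk(\cL^{\bS}_g)$. The only point needing care is the convention $\mathring\cC^{(0)}=\cM_v$ with $\beta$ acting by relabelling, together with $L_0=\mathrm{pt}$. Both are built into Definition \ref{Adef} and the decomposition \eqref{50}. No real obstacle is expected beyond this bookkeeping.
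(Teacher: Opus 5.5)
Your proposal is correct and is essentially the paper's proof. You apply the rank map to \eqref{23S} and use Lemma \ref{rankmap}: orbits of length $1$ give $A^{\alpha(v)}_{g_v,d_v}(y)$, $C(y)$ or $D(y)$, while orbits of length $\ell\ge2$ give $\psi^\ell$ of the constant term, which is $\HH^{\beta}_{\cM_{g,d}}$ for a vertex and $\HH_{L_0}=1$ for an edge.
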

\begin{proof}
Apply $\rk$ to \eqref{23S}. By Lemma \ref{rankmap}, a factor $p_\ell\pl f$ with $\ell\ge2$ contributes $\psi^\ell(f(0))$. The constant term of $A^{\bS,\beta}_{g,d}$ is $\HH^\beta_{\cM_{g,d}}$ (the case $r=0$) and those of $C^{\bS}$ and $D^{\bS}$ are $\HH_{L_0}=1$.
\end{proof}

Corollary \ref{24} can also be proved directly from \eqref{50}: by Lemma \ref{15}, a decomposition $I_\cdot$ contributes to $\HH^\alpha$ only if $I_v=\emptyset$ whenever $\alpha v\ne v$ and $I_e=\emptyset$ whenever $\alpha e\ne e$, and the trace of a cyclic permutation of the factors $\cM_w$, $w\in O$, is $\psi^{\ell(O)}\HH^{\alpha^{\ell(O)}(v_O)}_{\cM_{v_O}}$ if $\ell(O)\ge 2$.

\begin{rema}\label{63}
For $g=2$, every vertex moved by an automorphism of a stable graph has $(g_v,d_v)=(1,1)$ or $(0,3)$, and the orbits have length $2$ (see \S\ref{S5.1}). Since $\HH_{\cM_{1,1}}=x$ and $\HH_{\cM_{0,3}}=1$, the factor in \eqref{23} for such an orbit is $\psi^2(x)=x^2$ or $1$, which is the product of the Hodge-Deligne polynomials of the two vertex moduli spaces. For general $g$, however, $\psi^{\ell}\HH_{\cM_v}$ differs from $\HH_{\cM_v}^\ell$. 
\end{rema}

\begin{exam}\label{39}
For the smooth graph, $\cL^{\bS}_\varnothing=A_{g,0}$. By \eqref{19} and Lemma \ref{fibered},
\beq\label{29}
\cL^{\bS}_\varnothing=\bh(1)\bh(x)\cF^{\bS}_g,\qquad \cF^{\bS}_g=\sum_{\lambda}(-1)^{|\lambda|}s_{\lambda'}\HH_{\cM_g;\mathbb S_\lambda\V_g},
\eeq
and numerically $\cL_\varnothing=e^{(1+x)y}\cF_g(y)$ with $\cF_g(y)=\sum_{k\ge 0}\frac{y^k}{k!}\HH_{\cM_g;\V_g[-1]^{\otimes k}}$.
In \S\ref{S5}, we will compute $\cF_2^{\bS}$ and $\cF_2$.
\end{exam}

\subsection{Losev-Manin spaces}
The Losev-Manin series $C^{\bS}$ and $D^{\bS}$ are explicitly computed as follows.

\begin{prop}\label{LM}
\[C^{\bS}=\frac{x-1}{x-\bh(x)\be(-1)},\qquad D^{\bS}=\frac{\bh(x)\be(1)+x}{x+1}\cdot(p_2\pl C^{\bS}).\]
\end{prop}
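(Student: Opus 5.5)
The plan is to stratify $L_k=\bcM_{0,2|k}$ by the length of the chain of rational components joining the two heavy points. Then $C^{\bS}$ becomes a geometric series in $\HH^{\bS}_Q$, and $D^{\bS}$ becomes a twisted version of that series, to which Lemma \ref{orbit} applies.

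\emph{The stratification.} A point of $L_k$ with $k\ge1$ is a chain $R_1\cup\dots\cup R_j$ of $j\ge1$ copies of $\PP^1$. The first heavy point lies on $R_1$, the second on $R_j$, and consecutive components meet in one node. Stability forces each $R_i$ to carry at least one light point. The light points on $R_i$ avoid the two special points of $R_i$ but may coincide with one another. Identifying the two special points of $R_i$ with $0$ and $\infty$, the light points on $R_i$ give a point of $(\CC^*)^{m_i}/\CC^*=Q_{m_i}$, where $m_i\ge1$ is their number. The chain is linearly ordered from the first heavy point to the second. Hence the locus of chains of length $j$ is the species product $Q^{\cdot j}$, and as $\bS$-spaces $[L_\bullet]=\sum_{j\ge0}[Q^{\cdot j}]$, the term $j=0$ being $L_0=\mathrm{pt}$. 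By \eqref{lambdaring} and Lemma \ref{Cstar},
\[
C^{\bS}=\sum_{j\ge0}(\HH^{\bS}_Q)^j=\frac{1}{1-\HH^{\bS}_Q}=\frac{x-1}{x-1-\bh(x)\be(-1)+1}=\frac{x-1}{x-\bh(x)\be(-1)}.
\]
I would check that this stratification is $\bS_k$-equivariant, that it is a stratification by locally closed pieces, and that the gluing identification holds stratum by stratum, exactly as in Proposition \ref{tails}.

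\emph{The flip.} The flip $\iota$ preserves the chain length $j$. It reverses the order of the components, sending $R_i$ to $R_{j+1-i}$, and it exchanges the roles of $0$ and $\infty$ on each component. So on $Q^{\cdot j}$ it maps factor $i$ to factor $j+1-i$ via the inversion. By Lemma \ref{strata}, the twisted characteristic of $L_\bullet$ is the sum over $j$ of the twisted characteristics of $Q^{\cdot j}$. Group the factors into $\langle\iota\rangle$-orbits, as in the proof of Theorem \ref{graphsum}.

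Each pair $\{i,j+1-i\}$ with $i\ne j+1-i$ is an orbit of length $2$. Identify factor $j+1-i$ with factor $i$ through the inversion. The return map is then $\iota^2=\id$, so by Lemma \ref{orbit} with $\ell=2$ and $\gamma=\id$ the pair contributes $p_2\pl\HH^{\bS}_Q$. When $j$ is odd there is also a middle factor, fixed by $\iota$ and acted on by the diagonal inversion; it contributes $\HH^{\bS,\iota}_Q$.

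Summing over $j=2m$ and $j=2m+1$ and using Lemma \ref{Cstar} gives
\[
D^{\bS}=\Big(1+\HH^{\bS,\iota}_Q\Big)\sum_{m\ge0}\big(p_2\pl\HH^{\bS}_Q\big)^m=\frac{\bh(x)\be(1)+x}{x+1}\cdot\frac{1}{1-p_2\pl\HH^{\bS}_Q}.
\]
Since plethysm by $p_2$ is a ring homomorphism, the last factor equals $p_2\pl\big(1/(1-\HH^{\bS}_Q)\big)=p_2\pl C^{\bS}$, which gives the stated formula for $D^{\bS}$.

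\emph{Main obstacle.} The step needing most care is the flip. First, one must check that the identification of the stratum with $Q^{\cdot j}$ intertwines $\iota$ with "reverse the order and invert each factor", including the choice of coordinates on each $R_i$. Second, the pairing in Lemma \ref{orbit} must give the return twist $\iota\circ\iota=\id$ rather than $\iota$. Third, the Koszul signs must be accounted for correctly.

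As a sanity check, I would apply $\rk$. This gives $C(y)=(x-1)/(x-e^{(x-1)y})$ and $D(y)=(e^{(x+1)y}+x)/(x+1)$, the latter using $\rk(p_2\pl C^{\bS})=\psi^2(1)=1$ from Lemma \ref{rankmap}. Both should be compared with small cases such as $L_1=\mathrm{pt}$ and $L_2=\PP^1$.
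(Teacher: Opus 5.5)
Your proposal is correct and follows the paper's proof essentially step by step. You use the same stratification by chain length (with the labels on each component), giving $[L_\bullet]=\sum_{j\ge0}[Q^{\cdot j}]$ and $C^{\bS}=(1-\HH^{\bS}_Q)^{-1}$. You treat the flip the same way, via Lemma \ref{orbit} with return map the composite of two inversions, so that each pair of factors contributes $p_2\pl\HH^{\bS}_Q$ and the middle factor for odd $j$ contributes $\HH^{\bS,\iota}_Q$.
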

\begin{proof}
Stratify $L_k$ by the number $j\ge1$ of components of the chain together with the ordered partition $S_1\sqcup\dots\sqcup S_j=\{1,\dots,k\}$ recording which labels lie on which component, the components being ordered from the $0$-end to the $\infty$-end. By stability, every $S_i$ is nonempty. Each component of the chain carries exactly two special points, 
and so its automorphism group, as a $\PP^1$ with two marked points, is $\C^*$. The moduli of the $|S_i|\ge1$ labels on it is $Q_{|S_i|}=(\C^*)^{|S_i|}/\C^*$. The chain is linearly ordered and its two ends are distinguishable, so there are no further automorphisms, and the stratum is $\prod_iQ_{|S_i|}$. Hence $L_\bullet=\sum_{j\ge0}Q^{\cdot j}$ in the Grothendieck ring of $\bS$-spaces, with $Q$ as in Lemma \ref{Cstar}, and $C^{\bS}=(1-\HH^{\bS}_Q)^{-1}$, which is the first formula by Lemma \ref{Cstar}.

The flip $\iota$ maps the stratum indexed by $(S_1,\dots,S_j)$ to the one indexed by $(S_j,\dots,S_1)$, acting by inversion on each component. For $\sigma\in\bS_k$, $\sigma\iota$ preserves the stratum if and only if $\sigma(S_i)=S_{j+1-i}$ for all $i$. In terms of $\bS$-spaces, $\iota$ acts on $Q^{\cdot j}$ by reversing the order of the factors, composed with the inversion on each factor. The factors $i$ and $j+1-i$ with $i<j+1-i$ form an orbit of length $2$. Identifying the factor $j+1-i$ with the factor $i$ through the inversion, $\iota$ maps the first to the second by the identity and the second to the first by the composite of two inversions, which is the identity. By Lemma \ref{orbit}, each such pair contributes $p_2\pl\HH^{\bS}_Q$. If $j$ is odd, the middle factor is preserved, with $\iota$ acting by the inversion, and contributes $\HH^{\bS,\iota}_Q$. Hence
\[
D^{\bS}=\big(1+\HH^{\bS,\iota}_Q\big)\sum_{m\ge0}\big(p_2\pl\HH^{\bS}_Q\big)^m=\big(1+\HH^{\bS,\iota}_Q\big)\,(p_2\pl C^{\bS}),
\]
and Lemma \ref{Cstar} gives $1+\HH^{\bS,\iota}_Q=(\bh(x)\be(1)+x)/(x+1)$.
\end{proof}

\begin{coro}\label{25}
For $k\ge1$,
\beq\label{27}
\HH_{L_k}=\sum_{j=1}^k j!\genfrac\{\}{0pt}{}{k}{j}(x-1)^{k-j},
\qquad
\HH^\iota_{L_k}=(x+1)^{k-1},
\eeq
where $\genfrac\{\}{0pt}{}{k}{j}$ is a Stirling number of the second kind. Summing them up, we get
\beq\label{26}
C(y)=\frac{x-1}{x-e^{(x-1)y}},\qquad
D(y)=\frac{e^{(x+1)y}+x}{x+1}.
\eeq
\end{coro}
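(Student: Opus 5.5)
The plan is to apply the rank map to Proposition \ref{LM}, which gives \eqref{26} directly, and then to extract coefficients for \eqref{27}.

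\emph{Generating functions.} Under $\rk$ we have $\bh(t)\mapsto e^{ty}$ and $\be(t)\mapsto e^{ty}$. Hence $\rk(\bh(x)\be(-1))=e^{(x-1)y}$ and $\rk(\bh(x)\be(1))=e^{(x+1)y}$. Since $\rk$ is a ring homomorphism, the first formula gives $C(y)=(x-1)/(x-e^{(x-1)y})$.

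For $D$, Lemma \ref{rankmap} gives $\rk(p_2\pl C^{\bS})=\psi^2(C^{\bS}(0))$. The constant term is $C^{\bS}(0)=\HH_{L_0}=1$, which can also be checked from the formula, since $(x-1)/(x-1)=1$. Therefore $D(y)=(e^{(x+1)y}+x)/(x+1)$.

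\emph{Coefficients of $D$.} For $k\ge1$, the coefficient of $y^k/k!$ in $D(y)$ is $(x+1)^k/(x+1)=(x+1)^{k-1}$. This is $\HH^\iota_{L_k}$.

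\emph{Coefficients of $C$.} Write $w=e^{(x-1)y}-1$. Then
\[
C=\frac{x-1}{(x-1)-w}=\sum_{j\ge0}\frac{w^j}{(x-1)^j}.
\]
Recall the expansion $(e^{t}-1)^j=j!\sum_k \genfrac\{\}{0pt}{}{k}{j} t^k/k!$. With $t=(x-1)y$, we get
\[
w^j=j!\sum_k\genfrac\{\}{0pt}{}{k}{j}(x-1)^k\frac{y^k}{k!}.
\]
Dividing by $(x-1)^j$ and summing over $1\le j\le k$ gives the stated formula for $\HH_{L_k}$. The $j=0$ term contributes only the constant $1$.

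\emph{Cross-check.} As an independent check, one can use the stratification in the proof of Proposition \ref{LM}. The ordered set partitions of $[k]$ into $j$ blocks number $j!\genfrac\{\}{0pt}{}{k}{j}$, and each stratum has $\HH=\prod_i(x-1)^{|S_i|-1}=(x-1)^{k-j}$.

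For $\iota$, only palindromic strata contribute. With $\sigma=\mathrm{id}$, a stratum indexed by $(S_1,\dots,S_j)$ is preserved by $\iota$ only if $S_i=S_{j+1-i}$. Since the blocks are disjoint and nonempty, this forces $j=1$. By Lemmas \ref{16} and \ref{Cstar}, the contribution is then $\HH^\iota_{Q_k}=(x+1)^k/(x+1)=(x+1)^{k-1}$.

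The only delicate point is the constant term in $\rk(p_2\pl C^{\bS})$, which is handled by Lemma \ref{rankmap}. Everything else is routine.
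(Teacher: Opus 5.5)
Your proof is correct and follows the paper's argument. You apply the rank map to Proposition \ref{LM} to get \eqref{26}, read $\HH^\iota_{L_k}$ off $D(y)$, and your palindromic-strata and ordered-set-partition arguments match the paper's. The one difference is that you obtain $\HH_{L_k}$ mainly by expanding $C(y)=\sum_j\big(w/(x-1)\big)^j$ using the Stirling generating function $(e^t-1)^j/j!$, whereas the paper reads it directly from the stratification by ordered set partitions, which you give only as a cross-check.
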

\begin{proof}
The formulas \eqref{26} are the images of Proposition \ref{LM} under the rank map, since $\rk(\bh(t))=e^{ty}$, $\rk(\be(t))=e^{ty}$ and $\rk(p_2\pl C^{\bS})=1$ by Lemma \ref{rankmap}. The second formula in \eqref{27} is the coefficient of $y^k/k!$ in $D(y)$. For the first, the strata in the proof of Proposition \ref{LM} with $j$ components are indexed by the surjections $[k]\twoheadrightarrow[j]$, of which there are $j!\genfrac\{\}{0pt}{}{k}{j}$, and each has class $\prod_{i}(x-1)^{|S_i|-1}=(x-1)^{k-j}$. Alternatively, $\iota$ preserves no stratum with $j\ge2$, since $S_i=S_{j+1-i}$ is impossible for disjoint nonempty sets, so by Lemma \ref{15} only $j=1$ contributes to $\HH^\iota_{L_k}$, and Lemma \ref{16} gives $(x+1)^{k-1}$.
\end{proof}

\begin{exam}
$L_1=\operatorname{pt}$; $L_2\cong\PP^1$, with $\HH=x+1$; $L_3$ is the toric surface of the hexagon, with $\HH=x^2+4x+1$. In general $L_k$ is the permutohedral toric variety of dimension $k-1$, and $\HH_{L_k}$ is its Eulerian polynomial. Equivariantly, $\HH^{\bS}_{L_2}=(1+x)h_2$ and $\HH^{\bS}_{L_3}=(1+2x+x^2)h_3+x\,s_{21}$. The twisted characteristic $D^{\bS}$ records the traces of $\sigma\iota$. For instance $\HH^{\bS,\iota}_{L_2}=(1+x)h_2$ and
\[\HH^{\bS,\iota}_{L_3}=(x+1)^2\frac{p_1^3}6+(x^2+1)\frac{p_1p_2}2+(x^2-x+1)\frac{p_3}3=(1+x^2)h_3+x\,s_{21},\]
so that the traces of $\iota$, $(12)\iota$ and $(123)\iota$ on $H^*(L_3)$ are $(x+1)^2$, $x^2+1$ and $x^2-x+1$.
\end{exam}

\bigskip
\section{The genus two case: local systems and punctured curves}\label{S5}
In this section, we specialize to $g=2$ and compute all the twisted characteristics $A^{\bS,\beta}_{g,d}$ which appear in Theorem \ref{graphsum}.

\subsection{Stable graphs of genus 2}\label{S5.1}
\begin{prop}\label{28}
There are exactly seven stable graphs of genus $2$ as follows.
\begin{center}
\footnotesize
\begin{tabular}{@{}llccl@{}}
\toprule
 & description & vertex types $(g_v,d_v)$ & codim & $\Aut(\Gamma)$\\
\midrule
$\varnothing$ & one genus-$2$ vertex, no edges & $(2,0)$ & $0$ & trivial\\
$\Gamma_1$ & two genus-$1$ vertices joined by one edge & $(1,1),(1,1)$ & $1$ & $\Z/2$, swap\\
$\Gamma_0$ & one genus-$1$ vertex with a loop & $(1,2)$ & $1$ & $\Z/2$, flip\\
$\Gamma_a$ & genus-$0$ vertex with a loop, joined to a genus-$1$ vertex & $(0,3),(1,1)$ & $2$ & $\Z/2$, flip\\
$\Gamma_b$ & one genus-$0$ vertex with two loops & $(0,4)$ & $2$ & $D_4$, order $8$\\
$\Gamma_c$ & two genus-$0$ vertices, three parallel edges (theta) & $(0,3),(0,3)$ & $3$ & $\bS_3\times\Z/2$, order $12$\\
$\Gamma_d$ & two genus-$0$ vertices with a loop each, a bridge (dumbbell) & $(0,3),(0,3)$ & $3$ & $D_4$, order $8$\\
\bottomrule
\end{tabular}
\end{center}
\end{prop}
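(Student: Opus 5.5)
The plan is to enumerate stable graphs of genus two directly from the genus formula and then compute each automorphism group by hand. Write $g=b_1(\Gamma)+\sum_v g_v$, where $b_1=|E|-|V|+1$ is the first Betti number, and use the stability condition $2g_v-2+d_v>0$ together with $\sum_v(2g_v-2+d_v)=2g-2=2$. Since every summand is a positive integer, there are at most two vertices. If there is one vertex, then $2g_v-2+d_v=2$. If there are two vertices, then both summands equal $1$, so each vertex is of type $(1,1)$ or $(0,3)$.

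The enumeration then splits into the following cases.

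\emph{One vertex.} The possible types are $(g_v,d_v)\in\{(2,0),(1,2),(0,4)\}$. The half-edges must pair up into loops, which gives the smooth graph $\varnothing$, the graph $\Gamma_0$ with one loop, and the graph $\Gamma_b$ with two loops.

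\emph{Two vertices.} The graph is connected, so there is at least one edge between the two vertices.
\begin{itemize}
\item Two vertices of type $(1,1)$: the only possibility is a single edge joining them, which is $\Gamma_1$.
\item Types $(1,1)$ and $(0,3)$: the genus-one vertex's unique half-edge must go to the other vertex. The two remaining half-edges at the $(0,3)$ vertex then form a loop, which gives $\Gamma_a$.
\item Two vertices of type $(0,3)$: the number of edges between them is odd, because the remaining half-edges at each vertex pair into loops, and each vertex has three half-edges. One connecting edge plus a loop at each vertex gives the dumbbell $\Gamma_d$. Three connecting edges give the theta graph $\Gamma_c$.
\end{itemize}
As a consistency check, the codimension equals $|E|$, which recovers the codim column. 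The genus formula is satisfied in each case.

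For the automorphism groups, I view $\Aut(\Gamma)$ as the group of permutations of the half-edges that preserve the vertex partition, the edge pairing, and the genus labels.
\begin{itemize}
\item $\Gamma_1$: the only nontrivial automorphism swaps the two vertices, so the group is $\Z/2$.
\item $\Gamma_0$ and $\Gamma_a$: the loop can be flipped, and nothing else can move, so each group is $\Z/2$.
\item $\Gamma_b$: each of the two loops can be flipped, and the two loops can be swapped. This gives $(\Z/2)^2\rtimes\Z/2=\Z/2\wr\Z/2$, the dihedral group $D_4$ of order $8$.
\item $\Gamma_c$: the three edges can be permuted arbitrarily, and the two vertices can be swapped, which reverses every edge. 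Each half-edge is determined by its edge and its vertex, so the group is exactly $\bS_3\times\Z/2$.
\item $\Gamma_d$: each loop can be flipped, and the vertices can be swapped, which carries one loop to the other and reverses the bridge. This again gives $\Z/2\wr\Z/2=D_4$ of order $8$.
\end{itemize}

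There is no real obstacle here. The one point needing care is the counting convention: automorphisms must be counted as half-edge permutations, so that loop flips are included. This is the convention in which the formula \eqref{18} and Burnside's formula hold, since a flip acts nontrivially on the Losev-Manin chain through $\iota$.
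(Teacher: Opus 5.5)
Your proposal is correct and is exactly the straightforward enumeration that the paper omits. The identity $\sum_v(2g_v-2+d_v)=2$, with each term at least $1$, limits you to at most two vertices. The parity of the number of connecting edges separates the dumbbell from the theta graph. Counting automorphisms as half-edge permutations, which is the paper's convention and the one needed for \eqref{18} and for the flip-twisted terms in Lemma~\ref{54}, gives the stated groups and orders.
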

The proof is straightforward and we omit it. 

\subsection{Cohomology of local systems}\label{S5.2}
For $g,n\ge 0$, let $$q_{g,n}:\cC_{g,n}=\bcM_{g,n+1}\times_{\bcM_{g,n}}\cM_{g,n}\lra \cM_{g,n}$$ be the universal curve and
$$\mathring{q}_{g,n}:\mathring{\cC}_{g,n}=\cM_{g,n+1}\lra \cM_{g,n}$$
be $\cC_{g,n}$ minus the $n$ universal sections. When $n=0$, $\cC_g=\cC_{g,0}=\mathring{\cC}_{g,0}=\mathring{\cC}_{g}$. The fibers of $\mathring{q}_{1,1}$ are elliptic curves minus the identity, whose compactly supported cohomology is $\QQ(-1)[-2]\oplus \QQ^2[-1]$. Let $V=R^1{q_{1,1}}_*\QQ$, a local system of rank 2 over $\cM_{1,1}$, and let $\V=\V_2=R^1{q_2}_* \QQ$, of rank 4 over $\cM_2$. Let
\beq\label{EFdef}
\cE^{\bS}=\sum_{\lambda}(-1)^{|\lambda|}s_{\lambda'}\HH_{\cM_{1,1};\mathbb S_\lambda V},\qquad \cF^{\bS}=\cF^{\bS}_2=\sum_{\lambda}(-1)^{|\lambda|}s_{\lambda'}\HH_{\cM_{2};\mathbb S_\lambda\V},
\eeq
and $\cE(y)=\rk(\cE^{\bS})$, $\cF(y)=\rk(\cF^{\bS})$. Since $\sum_{\lambda\vdash k}\dim V_{\lambda'}\,\mathbb S_\lambda W=W^{\otimes k}$,
\[\cE(y)=\sum_{k\ge 0}\frac{y^k}{k!}\HH_{\cM_{1,1};V[-1]^{\otimes k}}, \quad \cF(y)=\sum_{k\ge 0}\frac{y^k}{k!}\HH_{\cM_{2};\V[-1]^{\otimes k}}.\]

\begin{lemm}\label{35}
$\HH_{\mathcal M_{1,1};\mathbb S_\lambda V}=0$ and $\HH_{\mathcal M_2;\mathbb S_\lambda \V}=0$ for $|\lambda|$ odd. Therefore
\beq\label{55}\cE^{\bS}=\sum_{|\lambda|\ \mathrm{even}}s_{\lambda'}\,\HH_{\cM_{1,1};\mathbb S_\lambda V},\quad \cF^{\bS}=\sum_{|\lambda|\ \mathrm{even}}s_{\lambda'}\,\HH_{\cM_2;\mathbb S_\lambda\V},\eeq
and $\cE(y)$, $\cF(y)$ are even functions of $y$.
\end{lemm}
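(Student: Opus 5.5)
The vanishing comes from the hyperelliptic involution. On $\cM_{1,1}$, every elliptic curve $(E,0)$ has the automorphism $-1$, and it acts on $V=R^1q_*\QQ$ fiberwise by $-1$. The same holds on $\cM_2$, where the hyperelliptic involution of every genus two curve acts on $\V=R^1q_*\QQ$ by $-1$. So in both cases there is a central element $\epsilon$ of order two in the generic automorphism group, hence in the inertia of the stack, and it acts on the local system by $-1$. It then acts on $\mathbb S_\lambda V$ (resp.\ $\mathbb S_\lambda\V$) by $(-1)^{|\lambda|}$, since $\mathbb S_\lambda$ is a direct summand of the $|\lambda|$-th tensor power.

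The key step is to show that a local system on which a generic central automorphism acts by $-1$ has vanishing rational cohomology, and in particular vanishing compactly supported cohomology. I will argue as follows. Write $\cM_{1,1}=[\mathfrak H/\mathrm{SL}_2(\ZZ)]$, which with rational coefficients gives group cohomology of $\mathrm{SL}_2(\ZZ)$ with values in $\mathbb S_\lambda$ of the standard representation. Here $-I$ is central and acts by $(-1)^{|\lambda|}$. Central elements act trivially on group cohomology, but through the coefficients they act by $(-1)^{|\lambda|}$, so $H^*=0$ over $\QQ$ when $|\lambda|$ is odd. The same argument applies to $H^*_c$, using Poincaré duality or the analogous statement for a finite index torsion-free level cover followed by taking invariants. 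Concretely, pass to a level cover $\cM_{1,1}[N]$ (fine moduli, $N\ge3$), where $-1$ acts as a deck transformation composed with the local system map. Then $H^*_c(\cM_{1,1};\mathbb S_\lambda V)=H^*_c(\cM[N];\mathbb S_\lambda V)^{\mathrm{SL}_2(\ZZ/N)}$. The element $-I$ acts trivially on $\cM[N]$ and by $(-1)^{|\lambda|}$ on the coefficients, so the invariants vanish for odd $|\lambda|$. For $\cM_2$ I will use the level-$N$ cover with $\mathrm{Sp}_4(\ZZ/N)$ and $-I$, which corresponds to the hyperelliptic involution, in exactly the same way. Since all of these identifications respect mixed Hodge structures, the Hodge–Deligne polynomials vanish.

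The main point to handle carefully is the identification of the generic automorphism with the central element $-I$ acting trivially on the level cover but by $-1$ on $V$. This is standard: $-1$ on $H^1$ is the image of the elliptic or hyperelliptic involution under the monodromy map.

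The remaining claims then follow formally. In \eqref{EFdef}, the terms with $|\lambda|$ odd drop out and the sign $(-1)^{|\lambda|}$ becomes $1$, which gives \eqref{55}. Under the rank map, $s_{\lambda'}$ with $|\lambda|=k$ goes to a multiple of $y^k$. Hence only even powers of $y$ appear in $\cE(y)$ and $\cF(y)$, so both are even functions of $y$.
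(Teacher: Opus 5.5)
Your proof is correct and follows the same route as the paper. The generic automorphism ($-1$ on elliptic curves, the hyperelliptic involution in genus two) acts trivially on the base but by $(-1)^{|\lambda|}$ on $\mathbb S_\lambda V$ and $\mathbb S_\lambda\V$, which forces the rational compactly supported cohomology to vanish for odd $|\lambda|$. Your level-$N$ cover argument only makes explicit the one-line step in the paper that cohomology of the stack is the invariant part under this generic stabilizer.
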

\begin{proof}
Every elliptic curve carries the automorphism $-1$, and every genus $2$ curve carries the hyperelliptic involution, which are generic stabilizers. Both act by $-1$ on $H^1$ of the curve, and by $(-1)^{|\lambda|}$ on $\mathbb S_\lambda$. Cohomology of a stack with coefficients in a local system is computed by the invariants under such a generic automorphism, so it vanishes for $|\lambda|$ odd.
\end{proof}

Let $s_k=\dim S_k(\mathrm{SL}_2(\ZZ))$ be the dimension of the space of cusp forms of weight $k$, i.e.
\beq\label{41}
s_k=\left\{\begin{matrix}  \lfloor \frac{k}{12}\rfloor -1 & \text{if } k\equiv 2 \text{ mod } 12,\ k\ge 4,\\
\lfloor \frac{k}{12}\rfloor & \text{if } k\not\equiv 2 \text{ mod } 12,\ k\ge 4, \\
0 & \text{if } k \text{ is odd or } k\le 2.\end{matrix}\right.
\eeq

\begin{prop}\label{40}
We have $\HH_{\cM_{1,1};\Sym^0V}=x$ and $\HH_{\cM_{1,1};\Sym^{2k}V}=-1-s_{2k+2}(u^{2k+1}+v^{2k+1})$ for $k\ge1$, and
\[\cE^{\bS}=\sum_{\substack{\lambda_1\ge\lambda_2\ge0\\ \lambda_1+\lambda_2\ \mathrm{even}}}x^{\lambda_2}\,\HH_{\cM_{1,1};\Sym^{\lambda_1-\lambda_2}V}\ s_{(\lambda_1,\lambda_2)'}.\]
Numerically \cite[Proposition 4.1]{Ki},
\beq\label{45} \HH_{\cM_{1,1};V^{\otimes 2j}}=c_{jj} x^{j+1} - \sum_{i=0}^{j-1} c_{ij} x^i\left(1+s_{2j-2i+2}(u^{2j-2i+1}+v^{2j-2i+1})\right),\qquad c_{ij}=\binom{2j}{i}-\binom{2j}{i-1},
\eeq
where $c_{ij}$ is the multiplicity of $\Sym^{2j-2i}V(-i)$ in $V^{\otimes 2j}$.
\end{prop}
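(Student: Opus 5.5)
The plan is to treat the three assertions in order: first the Eichler--Shimura computation, then the equivariant series $\cE^{\bS}$ via the $\mathrm{GL}_2$ Schur functors of a rank two local system, and finally the numerical formula via the rank map.

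\emph{Step 1: Eichler--Shimura.} For $\Sym^0V=\QQ$ we have $H^*_c(\cM_{1,1})=H^*_c(\mathbb A^1)$ rationally, so $H^2_c=\QQ(-1)$ and $\HH=x$. For $k\ge1$, $H^0_c(\cM_{1,1};\Sym^{2k}V)=0$ since the stack is non-proper and the local system has no invariants. Dually, $H^2_c(\cM_{1,1};\Sym^{2k}V)$ vanishes, since it is Poincar\'e dual to $H^0(\cM_{1,1};\Sym^{2k}V)=0$, the monodromy of $\Sym^{2k}V$ having no invariants for $k\ge1$. So only $H^1_c$ survives. By Eichler--Shimura and Deligne's theory (Zucker, or Getzler's treatment in \cite{Get98}), $H^1_c$ is the extension of the weight $2k+1$ cuspidal part $\mathsf S[2k+2]$ by an Eisenstein part. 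The cuspidal part has Hodge types $(2k+1,0)$ and $(0,2k+1)$, each of dimension $s_{2k+2}$. The Eisenstein part is one dimensional of weight $0$ in compact support (it is the weight $2k+2$ class in ordinary $H^1$, dualized). The sign $(-1)^1$ then gives $-1-s_{2k+2}(u^{2k+1}+v^{2k+1})$. The only delicate point, which I expect to be the main obstacle, is pinning down the weight of the Eisenstein class in $H^1_c$ rather than $H^1$. I would settle this by the long exact sequence for $\cM_{1,1}\subset\bcM_{1,1}$: the boundary contributes $H^0$ of the cusp with coefficients in the invariants of the unipotent monodromy, which have weight $0$.

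\emph{Step 2: the series $\cE^{\bS}$.} Since $V$ has rank $2$, $\mathbb S_\lambda V=0$ unless $\lambda$ has at most two rows. For $\lambda=(\lambda_1,\lambda_2)$ we have $\mathbb S_\lambda V\cong\Sym^{\lambda_1-\lambda_2}V\otimes(\det V)^{\lambda_2}$. The Weil pairing identifies $\det V=\wedge^2V$ with $\QQ(-1)$, a constant Tate twist, so $\HH_{\cM_{1,1};\mathbb S_\lambda V}=x^{\lambda_2}\HH_{\cM_{1,1};\Sym^{\lambda_1-\lambda_2}V}$. Inserting this into \eqref{55} from Lemma \ref{35} (where the sign $(-1)^{|\lambda|}=1$) gives the stated sum. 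When $\lambda_1+\lambda_2$ is even, $\lambda_1-\lambda_2$ is also even, so Step 1 applies to every term.

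\emph{Step 3: the numerical formula.} Apply the rank map, or directly decompose $V^{\otimes 2j}$ as a $\mathrm{GL}(V)$-representation. By Clebsch--Gordan, iterated $j$ times, $V^{\otimes 2j}=\bigoplus_{i=0}^{j}\big(\Sym^{2j-2i}V\otimes(\det V)^{i}\big)^{\oplus c_{ij}}$. Here $c_{ij}=\binom{2j}{i}-\binom{2j}{i-1}$ is the number of standard tableaux of shape $(2j-i,i)$, by the hook formula or the ballot count. Using Step 1, the term $i=j$ contributes $c_{jj}x^j\cdot x$ and the terms $i<j$ contribute $-c_{ij}x^i\big(1+s_{2j-2i+2}(u^{2j-2i+1}+v^{2j-2i+1})\big)$, which is \eqref{45}.
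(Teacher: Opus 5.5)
Your proposal is correct and follows essentially the same route as the paper: Eichler--Shimura gives $H^*_c(\cM_{1,1};\Sym^{2k}V)$ in degree $1$ as $\mathsf S_{2k+2}$ plus a weight-$0$ Eisenstein class, then $\mathbb S_{(\lambda_1,\lambda_2)}V=\Sym^{\lambda_1-\lambda_2}V(-\lambda_2)$, then the Schur--Weyl multiplicities $c_{ij}$ (your direct Clebsch--Gordan decomposition of $V^{\otimes 2j}$ is the same fact the paper uses via the rank map). The one addition is your justification of the weight of the Eisenstein class through the sequence $0\to j_!L\to j_*L\to i_*i^*j_*L\to0$ at the cusp; it is valid and fills in a step the paper simply cites from Eichler--Shimura and \cite{Ki}.
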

\begin{proof}
The first statement is the Eichler-Shimura theory: $\cM_{1,1}$ has coarse space $\mathbb A^1$, and for $k\ge1$, $H^*_c(\cM_{1,1};\Sym^{2k}V)$ is concentrated in degree $1$, where it is $\mathsf S_{2k+2}\oplus\QQ$, the second summand coming from the Eisenstein series; see \cite[\S4]{Ki}. Since $V$ has rank $2$, $\mathbb S_\lambda V=0$ unless $\lambda$ has at most two rows, and $\mathbb S_{(\lambda_1,\lambda_2)}V=\Sym^{\lambda_1-\lambda_2}V\otimes\det(V)^{\lambda_2}=\Sym^{\lambda_1-\lambda_2}V(-\lambda_2)$, which gives the formula for $\cE^{\bS}$ by \eqref{55}. Applying the rank map, the coefficient of $y^{2j}/(2j)!$ is $\sum_\lambda\dim V_{\lambda'}\HH_{\cM_{1,1};\mathbb S_\lambda V}$, and $\dim V_{(2j-i,i)'}=\dim V_{(2j-i,i)}=c_{ij}$ by the hook length formula, which gives \eqref{45}.
\end{proof}

The computation of $\cF^{\bS}$ requires Petersen's theorem which determines the cohomology of local systems on the moduli space $\sA_2$ of principally polarized abelian surfaces. Let $s_{j,k}=\dim S_{j,k}(\mathrm{Sp}_4(\Z))$ be the dimension of the space of vector-valued Siegel cusp forms of type $\Sym^j\otimes\det^k$, let $\mathsf S_k$ and $\mathsf S_{j,k}$ be the attached motives, of ranks $2s_k$ and $4s_{j,k}$ and of Hodge types $\{(k-1,0),(0,k-1)\}$ and $\{(j+2k-3,0),(j+k-1,k-2),(k-2,j+k-1),(0,j+2k-3)\}$, and let $\Sbar_{j,k}=\operatorname{gr}^W_{j+2k-3}\mathsf S_{j,k}$.

\begin{theo}\cite[Theorem~2.1]{Pet}\label{42}
Let $a\ge b\ge0$ with $a+b$ even and $(a,b)\ne(0,0)$. If we denote by $\V_{a,b}$ the local system on $\sA_2$ induced by the irreducible representation of $\mathrm{Sp}_4$ with highest weight $(a,b)$, then $H^k_c(\sA_2,\V_{a,b})=0$ for $k\notin\{2,3,4\}$, and up to semisimplification
\begin{align*}
H^4_c&=\begin{cases}s_{a+b+4}\Q(-b-2)&a=b\ \text{even}\\0&\text{else},\end{cases}\\
H^3_c&=\Sbar_{a-b,b+3}+s_{a+b+4}\mathsf S_{a-b+2}(-b-1)+\mathsf S_{a+3}
+\begin{cases}s'_{a+b+4}\Q(-b-1)&a=b\ \text{even}\\ s_{a+b+4}\Q(-b-1)&\text{else}\end{cases}\\
&\qquad+\begin{cases}\Q&a=b\ \text{odd}\\0&\text{else}\end{cases}
+\begin{cases}\Q(-1)&b=0\\0&\text{else},\end{cases}\\
H^2_c&=\mathsf S_{b+2}+s_{a-b+2}\Q
+\begin{cases}s'_{a+b+4}\Q(-b-1)&a=b\ \text{even}\\0&\text{else}\end{cases}
+\begin{cases}\Q&a>b>0,\ a,b\ \text{even}\\0&\text{else}.\end{cases}
\end{align*}
Moreover, the Hodge-Deligne polynomial of $\Sbar_{j,k}$ is
\[
\HH_{\Sbar_{j,k}}=
\begin{cases}
s_{j,k}\; W_{j,k}(u,v), & j>0\ \text{or}\ k\ \text{odd},\\[3pt]
(s_{0,k}-s_{2k-2})\;W_{0,k}(u,v)\;+\;s_{2k-2}\;\big(u^{2k-3}+v^{2k-3}\big),
& j=0,\ k\ \text{even}
\end{cases}
\]
where
\[W_{j,k}(u,v)= u^{j+2k-3}+u^{j+k-1}v^{k-2}+u^{k-2}v^{j+k-1}+v^{j+2k-3}.\]
\end{theo}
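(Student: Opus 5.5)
The statement is quoted from Petersen \cite[Theorem~2.1]{Pet}, so the plan is to explain how it is derived and check that its normalization matches ours. The only part needing separate justification is the final formula for $\HH_{\Sbar_{j,k}}$.

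\emph{Structure of $H^*_c(\sA_2,\V_{a,b})$.} Work with the Baily--Borel compactification $\sA_2\subset\sA_2^*$ and Pink's theorem on the degeneration of $Rj_*\V_{a,b}$ along the boundary strata $\sA_1$ and $\sA_0$. This expresses the Eisenstein (boundary) part of the cohomology through cohomology of $\mathrm{SL}_2(\ZZ)$ with coefficients in $\Sym^m V$. Eichler--Shimura (Proposition \ref{40}) then produces the terms $\mathsf S_{b+2}$, $\mathsf S_{a+3}$, $s_{a-b+2}\Q$ and the various Tate classes.

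The inner cohomology $H^3_!$ is handled by Arthur's classification for $\mathrm{GSp}_4$. The general (Tempered) packets give $\Sbar_{a-b,b+3}$, the weight-graded piece of the motive of Siegel cusp forms of weight $(j,k)=(a-b,b+3)$. The Saito--Kurokawa packets occur only when $a=b$ is even. The Yoshida-type packets do not occur for level one. The remaining endoscopic terms $s_{a+b+4}\mathsf S_{a-b+2}(-b-1)$ and the corrections with $s'_{a+b+4}$ come from comparing with Faltings' BGG complex and the Hodge numbers. Vanishing outside degrees $2,3,4$ follows from the same analysis together with Poincaré duality and $\dim\sA_2=3$.

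\emph{Hodge--Deligne polynomial of $\Sbar_{j,k}$.} A general-type form contributes a pure rank-$4$ Hodge structure with the four types listed, so each such form contributes $W_{j,k}$. When $j>0$, or when $k$ is odd, every cusp form in $S_{j,k}$ is of general type: there are no Saito--Kurokawa lifts in these weights. When $j=0$ and $k$ is even, $S_{0,k}$ contains $s_{2k-2}$ Saito--Kurokawa lifts, coming from forms $f\in S_{2k-2}$. The motive of such a lift is $\mathsf S_{2k-2}\oplus\Q(-k+1)\oplus\Q(-k+2)$. Its weight-$(2k-3)$ graded piece is exactly $\mathsf S_{2k-2}$, of types $(2k-3,0)$ and $(0,2k-3)$. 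Subtracting these lifts and adding their graded contributions gives the stated case formula.

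\emph{Main obstacle.} The hard step is the endoscopic and Saito--Kurokawa bookkeeping in the $a=b$ even case. One must decide which Tate twists land in $H^2_c$, $H^3_c$ and $H^4_c$, via the counts $s'_{a+b+4}$ and $s_{a+b+4}$. For our purposes I would simply cite Petersen here. As a consistency check, I would verify the result for small $(a,b)$ against Euler characteristics computed by point counts \cite{Ber}.
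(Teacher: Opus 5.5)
Your route is the paper's: Theorem~\ref{42} is quoted from \cite[Theorem~2.1]{Pet}. The only added content, the formula for $\HH_{\Sbar_{j,k}}$, is justified in the paper exactly as you do it. Each Saito--Kurokawa lift has motive $\mathsf S_{2k-2}+\Q(-k+1)+\Q(-k+2)$, whose weight $2k-3$ part is $\mathsf S_{2k-2}$, while every other level-one form is of general type and contributes $W_{j,k}$.

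Two statements in your sketch of Petersen's argument are wrong. Nothing you finally rely on depends on them, but they should be corrected.

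\emph{Parity of the Saito--Kurokawa case.} Holomorphic Saito--Kurokawa lifts in weight $(a-b,b+3)$ require $a=b$ and $b+3$ even, i.e.\ $a=b$ \emph{odd}, not even. This is the case singled out in Corollary~\ref{43}, and it is also the case your own last paragraph uses ($j=0$, $k$ even). So ``only when $a=b$ is even'' contradicts your own argument.

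\emph{Endoscopic packets.} Yoshida-type packets do contribute at level one; only their holomorphic member is absent. The member that is generic at infinity does occur. For each eigenform of weight $a+b+4$ it gives a copy of $\mathsf S_{a-b+2}(-b-1)$ in $H^3_!$, with Hodge types $(a+2,b+1)$ and $(b+1,a+2)$. These are the two middle types of $W_{a-b,b+3}$, the outer types $(a+b+3,0)$ and $(0,a+b+3)$ being missing. Hence the term $s_{a+b+4}\mathsf S_{a-b+2}(-b-1)$ is precisely the Yoshida-type contribution. It is not a by-product of a comparison with the BGG complex.
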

Here $s'_{a+b+4}$ is a certain integer defined in \cite{Pet}, which is conjecturally zero. Since it occurs in $H^2_c$ and $H^3_c$ with the same Tate twist, it cancels in the alternating sum, so that the Hodge-Deligne polynomials below do not depend on it. The second case for $\HH_{\Sbar_{0,k}}$ reflects the Saito-Kurokawa lifts, whose motives are $\mathsf S_{2k-2}+\QQ(-k+1)+\QQ(-k+2)$ and whose weight $2k-3$ part is $\mathsf S_{2k-2}$.

Taking the alternating sum $\HH=H^2_c-H^3_c+H^4_c$ gives the following.
\begin{coro}\label{43}
Let $a\ge b\ge0$ with $a+b$ even. Let $\eta_{a,b}=0$ if $a=b$ is even and $\eta_{a,b}=1$ otherwise, and for a property $P$, let $[P]$ be $1$ if $P$ holds and $0$ if not. Then $\HH_{\sA_2;\V_{0,0}}=x^3+x^2$, and for $(a,b)\ne(0,0)$,
\beq\label{46}
\begin{aligned}
\HH_{\sA_2;\V_{a,b}}\;=\;
&\ s_{b+2}\big(u^{b+1}+v^{b+1}\big)\;+\;s_{a-b+2}
\;+\;\big[\,a>b>0,\ a\equiv b\equiv0\ (2)\,\big]\\
&-\;s_{a+b+4}\,s_{a-b+2}\big(u^{a+2}v^{b+1}+u^{b+1}v^{a+2}\big)
\;-\;s_{a+3}\big(u^{a+2}+v^{a+2}\big)\\
&-\;\eta_{a,b}\,s_{a+b+4}\,x^{b+1}
\;-\;\big[\,a=b\ \mathrm{odd}\,\big]
\;-\;\big[\,b=0\,\big]\,x
\;+\;\big[\,a=b\ \mathrm{even}\,\big]\,s_{a+b+4}\,x^{b+2}\\
&-\begin{cases}
s_{a-b,b+3}\; W_{a-b,b+3}(u,v), & a>b\ \text{or}\ b\ \text{even},\\[3pt]
(s_{0,b+3}-s_{2b+4})\;W_{0,b+3}(u,v)\;+\;s_{2b+4}\;\big(u^{2b+3}+v^{2b+3}\big),
& a=b,\ b\ \text{odd}.
\end{cases}
\end{aligned}\eeq
\end{coro}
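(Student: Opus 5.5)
The formula is obtained by taking the alternating sum $\HH_{\sA_2;\V_{a,b}}=\HH(H^2_c)-\HH(H^3_c)+\HH(H^4_c)$ in Theorem \ref{42}. This is legitimate because $H^k_c$ vanishes outside $k\in\{2,3,4\}$. The Hodge-Deligne polynomial depends only on the semisimplification, since the graded pieces $\mathrm{gr}_F^p\mathrm{gr}^W_{p+q}$ are additive in short exact sequences. The trivial case $(a,b)=(0,0)$ is handled separately. There $\HH_{\sA_2}=x^3+x^2$ is the classical fact that $\sA_2$ has the compactly supported Euler characteristic of $\mathbb A^3\sqcup\mathbb A^2$: for instance $\HH_{\cM_2}=x^3$ and $\HH_{\sA_{1}\times\sA_1/\bS_2}=x^2$, using $\HH_{\cM_{1,1}}=x$ and a symmetric square computation. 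Alternatively, one can simply cite this from \cite{Pet}.

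For $(a,b)\ne(0,0)$, I would go through each term and record its Hodge-Deligne polynomial.
\begin{enumerate}
\item $\mathsf S_k$ has rank $2s_k$ with types $(k-1,0)$ and $(0,k-1)$, so it gives $s_k(u^{k-1}+v^{k-1})$, and a twist $(-m)$ multiplies by $x^m$. Hence $H^2_c$ contributes $s_{b+2}(u^{b+1}+v^{b+1})+s_{a-b+2}$, the $s'$-term, and the bracket $[a>b>0$, both even$]$.
\item In $H^3_c$, the term $s_{a+b+4}\mathsf S_{a-b+2}(-b-1)$ gives $s_{a+b+4}s_{a-b+2}(u^{a-b+1}+v^{a-b+1})x^{b+1}=s_{a+b+4}s_{a-b+2}(u^{a+2}v^{b+1}+u^{b+1}v^{a+2})$.
\item Also in $H^3_c$, $\mathsf S_{a+3}$ gives $s_{a+3}(u^{a+2}+v^{a+2})$.
\item The Tate term in $H^3_c$ gives $s_{a+b+4}x^{b+1}$ when $\eta_{a,b}=1$, and $s'_{a+b+4}x^{b+1}$ otherwise. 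The latter cancels against the identical $s'$-term in $H^2_c$.
\item The remaining $H^3_c$ terms give $[a=b\text{ odd}]$ and $[b=0]x$.
\item $H^4_c$ gives $[a=b\text{ even}]s_{a+b+4}x^{b+2}$.
\item Finally, $\Sbar_{a-b,b+3}$ is evaluated by the stated formula for $\HH_{\Sbar_{j,k}}$ with $j=a-b$ and $k=b+3$.
\end{enumerate}

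The only step needing care is translating the case split for $\HH_{\Sbar_{j,k}}$. Its second case, $j=0$ with $k$ even, becomes $a=b$ with $b+3$ even, i.e.\ $a=b$ and $b$ odd. The first case, $j>0$ or $k$ odd, becomes $a>b$ or $b$ even. This matches the displayed cases. One should also confirm the signs: everything in $H^3_c$ enters with a minus sign, including the $\Sbar$ term.

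So the argument is a bookkeeping exercise with no real obstacle. The subtle points are the cancellation of $s'$, which needs the same twist $(-b-1)$ in both degrees, and the correct Hodge types for the Tate twists.
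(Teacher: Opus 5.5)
Your proposal is correct and is the paper's own argument: the paper also takes the alternating sum $H^2_c-H^3_c+H^4_c$ of Petersen's theorem term by term. The $s'_{a+b+4}$ contributions cancel because they carry the same twist $(-b-1)$ in both degrees, and your translation of the $\Sbar_{j,k}$ case split (first case: $a>b$ or $b$ even; second case: $a=b$ with $b$ odd) matches the displayed formula.
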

For example, $\HH_{\sA_2;\V_{1,1}}=-1$, $\HH_{\sA_2;\V_{2,0}}=-x$, $\HH_{\sA_2;\V_{2,2}}=0$, $\HH_{\sA_2;\V_{6,6}}=x^8$, and
\[
\HH_{\sA_2;\V_{11,1}}=1-x^2-u^{13}v^2-u^2v^{13}.
\]

For a partition $\lambda$ with at most four rows and $a\ge b\ge0$, let $m^\lambda_{a,b}$ be the multiplicity of $\V_{a,b}$ in the restriction of $\mathbb S_\lambda(\CC^4)$ from $\mathrm{GL}_4$ to $\mathrm{Sp}_4$. It may be computed by the Koike-Terada branching rule \cite{KT}, or directly by the Weyl character formula as the coefficient of $t_1^{a+2}t_2^{b+1}$ in $s_\lambda(t_1,t_1^{-1},t_2,t_2^{-1})\cdot\sum_{w\in W(C_2)}\mathrm{sgn}(w)\,t^{w(2,1)}$. For $j\ge0$, let $m^{2j}_{a,b}=\sum_{\lambda\vdash 2j}\dim V_\lambda\, m^\lambda_{a,b}$ be the multiplicity of $\V_{a,b}$ in $\V^{\otimes 2j}$, which is the number of walks of length $2j$ from $(0,0)$ to $(a,b)$ in the dominant chamber $a\ge b\ge0$ with steps $\pm e_1, \pm e_2$.

\begin{prop}\label{EF}
\[
\cF^{\bS}=\sum_{\substack{\ell(\lambda)\le4\\ |\lambda|\ \mathrm{even}}}\Big(\sum_{a\ge b\ge0}m^\lambda_{a,b}\,x^{(|\lambda|-a-b)/2}\,\HH_{\sA_2;\V_{a,b}}\Big)s_{\lambda'}\ -\ \frac12\Big((\cE^{\bS})^2+p_2\pl\cE^{\bS}\Big),
\]
with $\HH_{\sA_2;\V_{a,b}}$ given by \eqref{46}. Numerically, $\HH_{\cM_2}=x^3$ and for $j\ge1$,
\[
\HH_{\cM_2;\V^{\otimes 2j}}=\sum_{a\ge b\ge0}m^{2j}_{a,b}\,x^{(2j-a-b)/2}\,\HH_{\sA_2;\V_{a,b}}
\;-\;\tfrac12\sum_{i=0}^{j}\binom{2j}{2i} \HH_{\cM_{1,1};V^{\otimes 2i}} \HH_{\cM_{1,1};V^{\otimes (2j-2i)}},
\]
so that
\beq\label{56}
\cF(y)=x^3+\frac12x^2+\sum_{j> 0}\frac{y^{2j}}{(2j)!}\sum_{a\ge b\ge0}m^{2j}_{a,b}\,x^{(2j-a-b)/2}\,\HH_{\sA_2;\V_{a,b}}-\frac12\cE(y)^2.\eeq
\end{prop}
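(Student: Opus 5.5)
The plan is to compare $\cM_2$ with $\sA_2$ through the Torelli map. The Torelli morphism $\cM_2\to\sA_2$ is an open immersion on coarse spaces. It is compatible with the generic involution, because the hyperelliptic involution maps to $-1$. Its complement is the locus of products of elliptic curves, $\sA_1\times\sA_1/\bS_2=\mathrm{Sym}^2\cM_{1,1}$. The local system $\V$ on $\cM_2$ is the restriction of the standard rank four local system on $\sA_2$, so $\mathbb S_\lambda\V$ is also a restriction. Additivity of $\HH$ for the closed stratum then gives
\[
\HH_{\cM_2;\mathbb S_\lambda\V}=\HH_{\sA_2;\mathbb S_\lambda\V}-\HH_{\mathrm{Sym}^2\cM_{1,1};\mathbb S_\lambda\V|}.
\]
Both terms need a stack-theoretic justification. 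On $\sA_2$ the generic stabilizer is $\pm1$, and on $\cM_2$ it is the hyperelliptic involution. Both act by $(-1)^{|\lambda|}$ on $\mathbb S_\lambda$, so for $|\lambda|$ even the invariants match. For $|\lambda|$ odd everything vanishes, as in Lemma \ref{35}.

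For the first term, decompose $\mathbb S_\lambda(\CC^4)|_{\mathrm{Sp}_4}=\bigoplus m^\lambda_{a,b}\V_{a,b}\otimes(\text{power of the symplectic form})$. The similitude character contributes a Tate twist $\QQ(-(|\lambda|-a-b)/2)$. This twist is the factor $x^{(|\lambda|-a-b)/2}$ multiplying $\HH_{\sA_2;\V_{a,b}}$, which Corollary \ref{43} evaluates. For $(a,b)=(0,0)$ we use $\HH_{\sA_2}=x^3+x^2$.

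For the boundary term, over $\cM_{1,1}\times\cM_{1,1}$ the system $\V$ restricts to $V_1\oplus V_2$, with $\bS_2$ swapping the two factors. The relevant structure is the $\bS$-space $\sum_\lambda(-1)^{|\lambda|}s_{\lambda'}\mathbb S_\lambda(W)=\sum_r h_r[W[-1]]$ of Lemma \ref{fibered}(b). With $W=V_1\oplus V_2$ it factors as a product over the two factors, $\sum h_r[V_1[-1]]\cdot\sum h_r[V_2[-1]]$. The untwisted characteristic is therefore $(\cE^{\bS})^2$. The swap twist is handled by Lemma \ref{orbit} with $\ell=2$, $\gamma=\id$, which gives $p_2\pl\cE^{\bS}$. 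Burnside's formula \eqref{Burnside} then yields $\tfrac12((\cE^{\bS})^2+p_2\pl\cE^{\bS})$ for the $\mathrm{Sym}^2\cM_{1,1}$ stratum. Two points need a brief check here. First, the restriction of $\V$ is equivariantly the external sum. Second, the extra generic automorphisms $(\pm1,\pm1)$ of the product impose exactly the parity condition already built into $\cE^{\bS}$.

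The numerical form follows by applying $\rk$. Lemma \ref{rankmap} gives $\rk(p_2\pl\cE^{\bS})=\psi^2(\cE^{\bS}(0))=\psi^2(x)=x^2$. Using $\sum_{\lambda\vdash 2j}\dim V_{\lambda'}m^\lambda_{a,b}=m^{2j}_{a,b}$ (conjugation preserves dimensions), the coefficient of $y^{2j}/(2j)!$ gives the stated formula. For $j\ge1$ the $\tfrac12 x^2$ term only enters in degree $0$, where it produces $x^3+x^2-\tfrac12x^2-\tfrac12x^2=x^3$. The expression for $\cF(y)$ then follows, with the constant $x^3+\tfrac12x^2$. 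The description of $m^{2j}_{a,b}$ as a count of walks is the standard Pieri rule for tensoring with the standard representation of $\mathrm{Sp}_4$.

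I expect the main obstacle to be the equivariant boundary computation. This means identifying the twisted characteristic of the swap correctly, including Koszul signs for the odd classes $W[-1]$. It also means confirming that Lemma \ref{orbit}, which is stated for species products of $\bS$-spaces, applies to this tensor-product decomposition of coefficient systems. I would justify that step by running the same trace computation on $H^*_c(\cM_{1,1};V[-1]^{\otimes\bullet})^{\otimes2}$ with the cyclic shift.
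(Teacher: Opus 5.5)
Your proposal is correct and follows essentially the same route as the paper: Torelli gives $\HH_{\cM_2;-}=\HH_{\sA_2;-}-\HH_{\sA_{1,1};-}$. The first term comes from the $\mathrm{GL}_4\to\mathrm{Sp}_4$ branching with Tate twist $(|\lambda|-a-b)/2$, the $\sA_{1,1}$ term comes from the species square of the $\cM_{1,1}$ series together with Burnside and Lemma~\ref{orbit} ($\ell=2$, $\gamma=\id$), and the numerical form follows from the rank map.

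The one step to tighten is the stack comparison. Matching the generic stabilizers $\pm1$ is not by itself enough. What you need, and what the paper uses, is that in genus two the Torelli map is an isomorphism of $\cM_2$ onto the open substack $\sA_2\smallsetminus\sA_{1,1}$. This holds because every genus two curve is hyperelliptic, so $\Aut(C)\cong\Aut(JC,\theta)$ at every point, not just generically.
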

\begin{proof}
The Torelli morphism identifies $\cM_2$ with $\sA_2\smallsetminus\sA_{1,1}$ as stacks, where $\sA_{1,1}=[\sA_1^2/\bS_2]$ and $\sA_1=\cM_{1,1}$. Hence $\HH_{\cM_2;-}=\HH_{\sA_2;-}-\HH_{\sA_{1,1};-}$ for the local systems $\mathbb S_\lambda\V$, where $\V$ also denotes the standard local system on $\sA_2$.

For the first term, the representation $\mathbb S_\lambda(\CC^4)$ of $\mathrm{GL}_4$ has weight $|\lambda|$, so its restriction to $\mathrm{Sp}_4$ decomposes as $\bigoplus_{a,b}\V_{a,b}^{\oplus m^\lambda_{a,b}}$ with the summand $\V_{a,b}$, of weight $a+b$, occurring with Tate twist $(|\lambda|-a-b)/2$. This gives the first sum.

For the second, the restriction of $\V$ to $\sA_1^2$ is $V_1\oplus V_2$, the sum of the pullbacks of $V$ by the two projections. By Lemma \ref{fibered} (b) and the Cauchy identity, the $\bS$-object $r\mapsto H^*_c(\sA_1^2;(V_1[-1]\oplus V_2[-1])^{\otimes r})$ is the species square of the $\bS$-object $r\mapsto H^*_c(\sA_1;V[-1]^{\otimes r})$, by the K\"unneth formula, and the swap of $\sA_1^2$ exchanges the two factors. By \eqref{Burnside} and Lemma \ref{orbit} with $\ell=2$ and $\gamma=\id$, the contribution of $\sA_{1,1}$ is $\frac12((\cE^{\bS})^2+p_2\pl\cE^{\bS})$.

For the numerical form, apply the rank map. Since $\sum_{\lambda\vdash 2j}\dim V_{\lambda'}\mathbb S_\lambda=\otimes^{2j}$, the first term becomes the sum over $a,b$ with the multiplicities $m^{2j}_{a,b}$ of $\V_{a,b}$ in $\V^{\otimes 2j}$. The standard representation $\V$ has weights $\pm e_1,\pm e_2$, each of multiplicity one and none zero, so $\V_{a,b}\otimes\V=\bigoplus\V_{(a,b)\pm e_i}$, the sum being over those $(a,b)\pm e_i$ which are dominant. Iterating, $m^{2j}_{a,b}$ is the number of walks as stated. By Lemma \ref{rankmap}, $\rk(p_2\pl\cE^{\bS})=\psi^2(x)=x^2$, which together with $\HH_{\sA_2}=x^3+x^2$ gives $\HH_{\cM_2}=x^3$ and the term $\frac12x^2$ in \eqref{56}, and $\rk((\cE^{\bS})^2)=\cE(y)^2$ gives the last sum, where the terms with an odd number of factors $V_1$ vanish by Lemma \ref{35}.
\end{proof}

For instance, at $2j=2$, the walks give $m^2_{0,0}=m^2_{1,1}=m^2_{2,0}=1$ with twists $1,0,0$, so the first sum is $x(x^3+x^2)+(-1)+(-x)=x^4+x^3-x-1$, and the second is $\tfrac12\cdot2\cdot x(x^2-1)=x^3-x$. Hence $\HH_{\cM_2;\V^{\otimes 2}}=x^4-1$. Likewise, $\HH_{\cM_2;\V^{\otimes 4}}=3x^5-2x^4+3x^2-5x-3$. Together with \eqref{45} and \eqref{46}, Proposition \ref{EF} expresses $\cF^{\bS}$ in terms of the branching multiplicities and the dimensions $s_k$ and $s_{j,k}$ of spaces of cusp forms. All but the $s_{j,k}$ are elementary.

\subsection{Punctured universal curves}\label{S5.3}
In this subsection, we compute $A^{\bS,\beta}_{g,d}$ and $A^\beta_{g,d}$ in \eqref{defA} for those $(g,d)$ that appear in the computation of $\cL_2^\bS$ and $\cL_2$.
\begin{prop}\label{Agen0}
For $\beta\in\bS_d$ with $d\in\{3,4\}$, let $f_k(\beta)$ be the number of fixed points of $\beta^k$ on $[d]=\{1,2,\cdots,d\}$. Then
\[
A^{\bS,\beta}_{0,d}=\HH^\beta_{\cM_{0,d}}\cdot\exp\Big(\sum_{k\ge1}\big(x^k+1-f_k(\beta)\big)\frac{p_k}{k}\Big),
\]
with $\HH^\beta_{\cM_{0,3}}=1$, and $\HH^\beta_{\cM_{0,4}}=x-2,\ x,\ x+1$ according as the image of $\beta$ under the surjection $\bS_4\to\bS_3$, whose kernel is the Klein four group, is trivial, a transposition or a $3$-cycle. Explicitly, with $\bar\be=\be(-1)$,
\begin{align*}
A^{\bS}_{0,3}&=\bh(x)\bar\be^2, & A^{\bS,(12)}_{0,3}&=\bh(x)\,(p_2\pl\bar\be), & A^{\bS,(123)}_{0,3}&=\bh(x)\bh(1)\,(p_3\pl\bar\be),\\
A^{\bS}_{0,4}&=(x-2)\bh(x)\bar\be^3, & A^{\bS,(12)}_{0,4}&=x\,\bh(x)\bar\be\,(p_2\pl\bar\be),\\
A^{\bS,(12)(34)}_{0,4}&=(x-2)\bh(x)\bh(1)(p_2\pl\bar\be)^2, & A^{\bS,(1234)}_{0,4}&=x\,\bh(x)\bh(1)(p_4\pl\bar\be).
\end{align*}
\end{prop}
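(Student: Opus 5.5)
The plan is to apply Lemma \ref{fibered}(a) to the punctured universal curve $\mathring q_{0,d}:\mathring\cC_{0,d}=\cM_{0,d+1}\to\cM_{0,d}$, with $\beta$ acting on both base and total space by relabelling. The first step is to compute $K=R\mathring q_!\QQ$ $\beta$-equivariantly. The fibre over a point is $\PP^1$ minus the $d$ marked points. Its compactly supported cohomology is $\QQ(-1)$ in degree $2$, and $\ker\big(\QQ^{d}\to\QQ\big)$ in degree $1$, of weight $0$. The degree-one piece comes from $0\to H^0(\PP^1)\to H^0(\{p_1,\dots,p_d\})\to H^1_c\to0$, exactly as in the proof of Lemma \ref{16}.

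Both pieces are constant local systems. The $H^2_c$ piece is the trace of the fundamental class. The $H^1_c$ piece is the permutation representation on the punctures modulo the trivial one, and it has no monodromy because the punctures are labelled sections. So $K=T$ is constant, with $K'=0$, and $\beta$ acts on $T$ through its permutation of the $d$ sections. The supertrace is then
\[
\str(\beta^k|T)=x-\big(f_k(\beta)-1\big)=x^k\big|_{\psi}+1-f_k(\beta).
\]
More precisely, $\psi^k\str(\beta^k|T)=x^k+1-f_k(\beta)$, since $\psi^k$ fixes the weight-zero part and sends $x\mapsto x^k$. Lemma \ref{fibered}(a) then gives $A^{\bS,\beta}_{0,d}=\HH^\beta_{\cM_{0,d}}\exp\big(\sum_k(x^k+1-f_k(\beta))p_k/k\big)$. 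The one point I would check carefully is that the constancy argument is honest on the stack: the base $\cM_{0,d}$ is a fine scheme for $d\ge3$, and the fibration $\cM_{0,d+1}\to\cM_{0,d}$ is a family of complements of $d$ disjoint sections in a trivialised $\PP^1$-bundle.

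The second step is to compute $\HH^\beta_{\cM_{0,d}}$. For $d=3$ the space is a point, so the trace is $1$. For $d=4$, $\cM_{0,4}=\PP^1\smallsetminus\{0,1,\infty\}$, the cross-ratio line, and $H^*_c$ consists of $\QQ(-1)$ in degree $2$ together with $\QQ^3/\QQ$ in degree $1$. The Klein four group acts trivially on $\cM_{0,4}$, so $\bS_4$ acts through $\bS_3$, which permutes the three points $\{0,1,\infty\}$. The trace is therefore $x-(\#\mathrm{fix}-1)$, giving $x-2$, $x$ and $x+1$ for the identity, a transposition and a $3$-cycle respectively.

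The final step is to turn the exponentials into the stated products, using
\[
\bh(x)=\exp\Big(\sum_k x^kp_k/k\Big),\qquad \bar\be=\exp\Big(-\sum_kp_k/k\Big),\qquad p_m\pl\bar\be=\exp\Big(-\sum_kp_{mk}/k\Big),
\]
so that $\exp\big(-\sum_{m\mid k}m\,p_k/k\big)=p_m\pl\bar\be$. For each cycle type, write $1-f_k(\beta)$ as $1$ minus the sum over the cycles of $\beta$ of length $m$ dividing $k$ of $m$. The "$+1$" contributes $\bh(1)$, and each $m$-cycle contributes $p_m\pl\bar\be$.

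For example, with $\beta=(123)$ and $d=3$ there is one $3$-cycle, giving $\bh(x)\bh(1)(p_3\pl\bar\be)$. With $\beta=\id$ and $d=3$, the three fixed points give $\bh(1)\bar\be^{3}=\bar\be^2$, using $\bh(1)\bar\be=1$. The other cases are bookkeeping of the same kind, and the $\bh(1)\bar\be=1$ cancellation occurs whenever there is a fixed point.

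I expect the main obstacle to be the sign conventions: the Koszul sign on the degree-$1$ part, and the claim that $\psi^k$ acts on the $H^1_c$ piece only through $\beta^k$, with no sign $(-1)^k$. I would settle both by checking one case directly, for instance the coefficient of $p_1$ in $A^{\bS}_{0,3}$, which should equal $\HH_{\cM_{0,4}}=x-2$.
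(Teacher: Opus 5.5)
Your proposal is correct and is essentially the paper's proof. The ingredients match one for one: constancy of $K=\QQ(-1)[-2]\oplus(\QQ^d/\QQ)[-1]$ with $\beta$ permuting the labelled punctures, Lemma \ref{fibered}(a) with $K'=0$, the $\bS_3$-action on $\cM_{0,4}=\PP^1\smallsetminus\{0,1,\infty\}$, and the rewriting of each $m$-cycle of $\beta$ as a factor $p_m\pl\bar\be$. Your sign worry is already handled by Lemma \ref{fibered}(a), whose factor is $\psi^k\str(\beta^k|T)$ with no extra sign, as in the Koszul computation of Lemma \ref{orbit}; note only that the $p_1$-check you propose cannot detect a $(-1)^k$ error, whereas the swap trace $x^2-2$ on $H^*_c$ of the square of $\PP^1$ minus three points would.
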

\begin{proof}
The fiber of $\mathring q_{0,d}$ is $\mathring C=\PP^1$ minus $d$ points. From $0\to H^0(\PP^1)\to H^0(\{d\text{ points}\})\to H^1_c(\mathring{C})\to 0$, we have $H^1_c(\mathring{C})=\QQ^d/\QQ$, of weight $0$, on which $\bS_d$ acts through the permutation representation of the $d$ punctures, while $H^2_c(\mathring C)=\QQ(-1)$ with trivial action. Since the punctures are labelled, the local systems $R^i\mathring q_{0,d!}\QQ$ are constant, so $K=R\mathring q_{0,d!}\QQ=\QQ(-1)[-2]\oplus(\QQ^d/\QQ)[-1]$ is a constant variation, $\beta$-equivariantly. The super trace of $\beta^k$ on $K$ is $x-(f_k(\beta)-1)$, and Lemma \ref{fibered} (a) with $K'=0$ gives the formula. 

The action of $\bS_3$ on $\cM_{0,3}=\mathrm{pt}$ is trivial. The action of $\bS_4$ on $\cM_{0,4}=\PP^1\smallsetminus\{0,1,\infty\}$ factors through $\bS_4\to\bS_3$, and $\bS_3$ permutes the three punctures $\{0,1,\infty\}$. By the computation above with $d=3$, the trace of a permutation of the three punctures on $H^*_c(\cM_{0,4})$ is $x$ minus the number of fixed punctures plus $1$, i.e.\ $x-2$, $x$ and $x+1$ for the identity, a transposition and a $3$-cycle. A transposition and a $4$-cycle of $\bS_4$ map to transpositions, a $3$-cycle maps to a $3$-cycle, and $(12)(34)$ maps to the identity.

For the explicit expressions, we use $\exp(-\sum_k p_k/k)=\bar\be$ and $\exp(-\sum_m p_{\ell m}/m)=p_\ell\pl\bar\be$. For instance, for $\beta=(123)$ we have $f_k=3$ if $3|k$ and $f_k=0$ otherwise, so the exponent is $\sum_k(x^k+1)p_k/k-3\sum_mp_{3m}/(3m)$, which gives $\bh(x)\bh(1)(p_3\pl\bar\be)$. The other cases are similar. 
\end{proof}

\begin{prop}\label{Agen1}
Let $\partial=\partial/\partial p_1$. Then
\begin{align*}
A^{\bS}_{2,0}&=\bh(1)\bh(x)\cF^{\bS}, \qquad A^{\bS}_{1,1}=\bh(x)\cE^{\bS},\\
A^{\bS}_{1,2}&=\bh(x)\be(-1)\big(x\cE^{\bS}+\partial\cE^{\bS}\big),\qquad
A^{\bS,(12)}_{1,2}=\bh(x)\be(1)\big(x\cE^{\bS}-\partial\cE^{\bS}\big).
\end{align*}
\end{prop}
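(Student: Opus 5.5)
Each identity comes from Lemma \ref{fibered} applied to the punctured universal curve $\mathring q_{g,d}:\mathring\cC_{g,d}\to\cM_{g,d}$. The work in each case is to write $K=R\mathring q_{g,d!}\QQ$ as a constant part $T$ plus a part $K'$ pulled back from a curve moduli space of fewer points. The main issue is the $\beta$-twisted case for $(1,2)$.

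For $A^{\bS}_{2,0}$ there is nothing new: this is Example \ref{39}, with $K=\QQ\oplus\QQ(-1)[-2]\oplus\V[-1]$. For $(1,1)$, the fiber is $E\smallsetminus\{0\}$, so $K=\QQ(-1)[-2]\oplus V[-1]$. Lemma \ref{fibered} (a) with $T=\QQ(-1)[-2]$ gives the factor $\exp(\sum_k x^kp_k/k)=\bh(x)$. Part (b) then gives $\sum_\lambda(-1)^{|\lambda|}s_{\lambda'}\HH_{\cM_{1,1};\mathbb S_\lambda V}=\cE^{\bS}$, so $A^{\bS}_{1,1}=\bh(x)\cE^{\bS}$.

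For $(1,2)$, the fiber of $\mathring\cC_{1,2}\to\cM_{1,2}$ is $E\smallsetminus\{p_1,p_2\}$. Hence
\[
K=\QQ(-1)[-2]\oplus V[-1]\oplus\QQ[-1],
\]
where the last summand is $\ker(H^0(\{p_1,p_2\})\to H^0(E))$. The transposition $(12)$ acts on it by $-1$, and on $\QQ(-1)[-2]$ trivially. Lemma \ref{fibered} (a) with $T=\QQ(-1)[-2]\oplus\QQ[-1]$ gives the super trace $x^k-1$ for $\beta=\id$ and $x^k-(-1)^k$ for $\beta=(12)$. So the constant factors are $\bh(x)\be(-1)$ and $\bh(x)\be(1)$, as in Lemma \ref{Cstar}.

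It remains to compute $\HH^\beta_{\cM_{1,2};\sum_\lambda(-1)^{|\lambda|}s_{\lambda'}\mathbb S_\lambda V}$. The local systems are pulled back from $\cM_{1,1}$ along $\pi:\cM_{1,2}\to\cM_{1,1}$, and $\pi$ is exactly $\mathring q_{1,1}$. By the projection formula,
\[
H^*_c(\cM_{1,2};\pi^*W)=H^*_c(\cM_{1,1};W\otimes(\QQ(-1)[-2]\oplus V[-1])).
\]
Here $(12)$ acts on $\cM_{1,2}$ as the fiberwise map $p\mapsto -p$, after translating $p_1$ to the origin. So $(12)$ acts on $R\pi_!\QQ$ by $1$ on $\QQ(-1)[-2]$ and by $-1$ on $V[-1]$. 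On the coefficient system $\pi^*\mathbb S_\lambda V$, the induced action is the identity under the identification of the fiber $H^1$ via translation, which must be checked. I expect this sign bookkeeping on the stack, including the hyperelliptic $-1$ issues of Lemma \ref{35}, to be the main obstacle.

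Granting it, the twisted characteristic is
\[
x\,\cE^{\bS}\pm\sum_\lambda(-1)^{|\lambda|}s_{\lambda'}\HH_{\cM_{1,1};\mathbb S_\lambda V\otimes V[-1]}.
\]
The second term is identified with $\partial\cE^{\bS}$ as follows. Multiplying by one extra tensor factor $V[-1]$ corresponds on symmetric functions to $\partial/\partial p_1$: this is the Pieri/restriction rule $\partial s_\mu=\sum_{\lambda=\mu-\square}s_\lambda$. Equivalently, in $\sum_r h_r[K']$ one factor $K'$ is split off, giving $\partial\sum_rh_r[K']=K'\cdot\sum_rh_r[K']$. Combining with the constant factors yields both stated formulas for $A^{\bS}_{1,2}$ and $A^{\bS,(12)}_{1,2}$.
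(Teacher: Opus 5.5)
Your proposal is essentially the paper's proof. It applies Lemma \ref{fibered}~(a) with $T=\QQ(-1)[-2]\oplus\varepsilon[-1]$ and $K'=V[-1]$. It then uses the Leray spectral sequence and projection formula along $\pi\colon\cM_{1,2}\to\cM_{1,1}$, with $(12)$ acting fiberwise by $z\mapsto -z$, and finally the Pieri rule to produce $\partial\cE^{\bS}$. Two small precisions: the weight-zero piece is the cokernel $\varepsilon=H^0(\{0,z\})/H^0(E)$ rather than a kernel, and $K$ is only filtered by these pieces rather than a direct sum. Neither matters, since both descriptions give the sign representation and only the class of $K$ in the Grothendieck group enters.

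The check you defer is settled in one line, as in the paper. A translation of $E$ is homotopic to the identity, so it acts trivially on $H^1(E)$. Hence $(12)$ acts on the $V$-part by the natural action on the pullback of $V$ from $\cM_{1,1}$, over which it covers the identity, and no extra sign from the automorphism $-1$ of $E$ arises. As a sanity check, the resulting formula gives $\HH^{(12)}_{\cM_{1,3}}=x^3+1$, which matches a direct computation on the configuration spaces $F(E\smallsetminus\{0\},2)$ over $\cM_{1,1}$.
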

\begin{proof}
For $A_{2,0}^\bS$, $K=R\mathring q_{2,0!}\QQ=\QQ\oplus\QQ(-1)[-2]\oplus\V[-1]$ by \eqref{19}, and Lemma \ref{fibered} gives $\bh(1)\bh(x)\cF^{\bS}$ as in Example \ref{39}. For $A_{1,1}^\bS$, $K=R\mathring q_{1,1!}\QQ=\QQ(-1)[-2]\oplus V[-1]$, and Lemma \ref{fibered} gives $\bh(x)\cE^{\bS}$.

For $A_{1,2}^\bS$ and $A^{\bS,(12)}_{1,2}$, let $B=\cM_{1,2}$ and $Y=\mathring\cC_{1,2}=\cM_{1,3}$. We write a point of $B$ as $(E,0,z)$ with $z\ne0$, so that the fiber of $Y\to B$ is $E\smallsetminus\{0,z\}$ and a point of $Y^{(r)}_B$ is $(E,0,z;p_1,\dots,p_r)$ with $p_i\notin\{0,z\}$. From $0\to H^0(E)\to H^0(\{0,z\})\to H^1_c(E\smallsetminus\{0,z\})\to H^1(E)\to0$, we see that $K=R\mathring q_{1,2!}\QQ$ has a filtration with graded pieces $\QQ(-1)[-2]$, $\varepsilon[-1]$ and $V[-1]$, where $\varepsilon=H^0(\{0,z\})/H^0(E)\cong\QQ$ is a constant variation of weight $0$ and $V$ is the pullback of $V$ from $\cM_{1,1}$. Since the class of $K$ in the Grothendieck group of equivariant variations is the sum of these graded pieces, Lemma \ref{fibered} (a) applies with $T=\QQ(-1)[-2]\oplus\varepsilon[-1]$ and $K'=V[-1]$.

Let $\iota=(12)$. Exchanging the two marked points of $(E,0,z)$ gives $(E,z,0)$, and translating by $-z$ to restore the origin gives $(E,0,-z)$. Hence $\iota$ acts on $Y^{(r)}_B$ by $(z;p_1,\dots,p_r)\mapsto(-z;p_1-z,\dots,p_r-z)$, covering the identity of $\cM_{1,1}$. On the fibers, $p\mapsto p-z$ maps $\{0,z\}$ to $\{-z,0\}$, exchanging the two punctures, so $\iota$ acts by $-1$ on $\varepsilon$ and acts trivially on $\QQ(-1)[-2]$. Since a translation acts trivially on $H^1(E)$, it acts on the $V$-part by the identity of $H^1(E)$, i.e.\ by the natural action on the pullback of $V$ from $\cM_{1,1}$. Thus $\str(\iota^k|T)=x-(-1)^k$ and $\str(\id|T)=x-1$, and the factor of Lemma~\ref{fibered}~(a) is $\bh(x)\be(1)$ for $\iota$ and $\bh(x)\be(-1)$ for the identity.

It remains to compute $\HH^\gamma_{B;\mathbb S_\lambda V}$ for $\gamma\in\{\id,\iota\}$. The morphism $\pi:B=\cM_{1,2}\to\cM_{1,1}$ has fibers $E\smallsetminus\{0\}$, and $\iota$ acts fiberwise by $z\mapsto-z$, which acts trivially on $H^2_c(E\smallsetminus\{0\})=\QQ(-1)$ and by $-1$ on $H^1_c(E\smallsetminus\{0\})=H^1(E)$. By the Leray spectral sequence for $\pi$ and the projection formula,
\[
\HH^\gamma_{B;\mathbb S_\lambda V}=\HH_{\cM_{1,1};\mathbb S_\lambda V\otimes(\QQ(-1)[-2]\pm V[-1])}=x\,\HH_{\cM_{1,1};\mathbb S_\lambda V}\mp\HH_{\cM_{1,1};\mathbb S_\lambda V\otimes V},
\]
with the upper sign for $\gamma=\id$ and the lower sign for $\gamma=\iota$, where for $\iota$ the eigenvalue $-1$ on $V[-1]$ changes its sign. By the Pieri rule, $\mathbb S_\lambda V\otimes V=\bigoplus_{\mu=\lambda+\square}\mathbb S_\mu V$, where $\mu$ runs over the partitions obtained from $\lambda$ by adding a box. Since $\partial s_{\mu'}=\sum_{\lambda=\mu-\square}s_{\lambda'}$,
\[
\sum_\lambda(-1)^{|\lambda|}s_{\lambda'}\HH_{\cM_{1,1};\mathbb S_\lambda V\otimes V}=\sum_\mu(-1)^{|\mu|-1}\HH_{\cM_{1,1};\mathbb S_\mu V}\,\partial s_{\mu'}=-\partial\cE^{\bS}.
\]
Hence $\sum_\lambda(-1)^{|\lambda|}s_{\lambda'}\HH^\gamma_{B;\mathbb S_\lambda V}=x\cE^{\bS}\pm\partial\cE^{\bS}$, which, multiplied by the factor computed above, gives $A^\bS_{1,2}$ and $A^{\bS,(12)}_{1,2}$.
\end{proof}


Applying the rank map, with $\rk(\partial f)=\frac{d}{dy}\rk(f)$ and Lemma \ref{rankmap}, we obtain the following.
\begin{coro}\label{36}
$A_{2,0}=e^{(1+x)y}\mathcal F(y)$, $A_{1,1}=e^{xy}\mathcal E(y)$, $A_{0,3}=e^{(x-2)y}$, $A_{0,4}=(x-2)e^{(x-3)y}$,
\[
A_{1,2}=e^{(x-1)y}\big(x\mathcal E(y)+\mathcal E'(y)\big),\ \
A^{(1,2)}_{1,2}(y)=e^{(1+x)y}\big(x\mathcal E(y)-\mathcal E'(y)\big),
\]
\[A_{0,3}^{(1,2)}= e^{xy},\ \ A_{0,3}^{(1,2,3)}=e^{(x+1)y},\ \ A_{0,4}^{(1,2)}=xe^{(x-1)y},\]
\[A_{0,4}^{(1,2)(3,4)}=(x-2)e^{(x+1)y} ,\ \ A_{0,4}^{(1,2,3,4)}=xe^{(x+1)y}. \]
\end{coro}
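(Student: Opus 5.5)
The plan is to apply the rank map $\rk$ termwise to Propositions \ref{Agen0} and \ref{Agen1}, using that $\rk$ is a ring homomorphism together with Lemma \ref{rankmap}. The basic inputs are $\rk(\bh(t))=e^{ty}$ and $\rk(\be(t))=e^{ty}$, so that $\rk(\bar\be)=e^{-y}$. For $\ell\ge2$, Lemma \ref{rankmap} gives $\rk(p_\ell\pl\bar\be)=\psi^\ell(1)=1$, since the constant term of $\bar\be$ is $1$.

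For the genus zero terms, each expression in Proposition \ref{Agen0} is a product of a constant $\HH^\beta_{\cM_{0,d}}$, some factors $\bh(x)$, $\bh(1)$, $\bar\be$, and plethysms $p_\ell\pl\bar\be$. Under $\rk$ they become, for instance:
\begin{itemize}
\item $A_{0,3}=e^{xy}e^{-2y}=e^{(x-2)y}$;
\item $A^{(12)}_{0,3}=e^{xy}$;
\item $A^{(123)}_{0,3}=e^{(x+1)y}$;
\item $A_{0,4}=(x-2)e^{(x-3)y}$;
\item $A^{(12)}_{0,4}=xe^{(x-1)y}$;
\item $A^{(12)(34)}_{0,4}=(x-2)e^{(x+1)y}$;
\item $A^{(1234)}_{0,4}=xe^{(x+1)y}$.
\end{itemize}
As a cross-check, these agree with reading off $\rk\exp(\sum_k c_kp_k/k)=e^{c_1y}$ with $c_1=x+1-f_1(\beta)$.

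For the genus one and two terms, $\rk(\cE^{\bS})=\cE(y)$ and $\rk(\cF^{\bS})=\cF(y)$ by definition. This gives $A_{2,0}=e^{(1+x)y}\cF$ and $A_{1,1}=e^{xy}\cE$ directly. For $A_{1,2}$ and $A^{(12)}_{1,2}$, the one point requiring justification is $\rk(\partial f)=\frac{d}{dy}\rk(f)$. This holds because $\rk$ sets $p_k=0$ for $k\ge2$ and $p_1=y$, and differentiation in $p_1$ commutes with setting the other variables to zero; it suffices to check this on monomials $p_\lambda$. Then $\be(-1)\mapsto e^{-y}$ and $\be(1)\mapsto e^{y}$ give the stated prefactors $e^{(x-1)y}$ and $e^{(x+1)y}$.

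There is no real obstacle here; the only delicate points are the commutation of $\partial$ with $\rk$ and the vanishing of positive-degree parts of $p_\ell\pl\bar\be$ under $\rk$. Both are handled as above.
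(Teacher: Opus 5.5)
Your proposal is correct and follows the paper's own argument: apply the rank map termwise to Propositions \ref{Agen0} and \ref{Agen1}, using $\rk(\bh(t))=\rk(\be(t))=e^{ty}$, Lemma \ref{rankmap} for the factors $p_\ell\pl\bar\be$, and $\rk(\partial f)=\frac{d}{dy}\rk(f)$. Your individual evaluations, and the cross-check via the coefficient $x+1-f_1(\beta)$ of $p_1$, are all consistent with the stated formulas.
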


\bigskip
\section{The genus two formula}\label{S6}
In this section, we assemble the results of \S\ref{S4} and \S\ref{S5} into a closed formula for $\cL^{\bS}_2$ and prove Theorem C.

By Theorem \ref{graphsum} and Proposition \ref{28},
\beq\label{47}\cL^{\bS}_2=\sum_{n\ge 0}\HH^{\bS}_{\bcM_{2,(0^+)^n}}=\cL^{\bS}_\varnothing +\cL^{\bS}_{\Gamma_1}+\cL^{\bS}_{\Gamma_0}+\cL^{\bS}_{\Gamma_a}+\cL^{\bS}_{\Gamma_b}+\cL^{\bS}_{\Gamma_c}+\cL^{\bS}_{\Gamma_d}.\eeq

\def\bC{\mathbf{C}}
\def\bD{\mathbf{D}}
\def\bA{\mathbf{A}}
\begin{lemm}\label{54}
Write $\bA=A^\bS$, $\bC=C^{\bS}$ and $\bD=D^{\bS}$. Then
\begin{align*}
\cL^{\bS}_{\varnothing}&=\bA_{2,0},\\
\cL^{\bS}_{\Gamma_1}&=\tfrac12\Big[\bA_{1,1}^2\bC+(p_2\pl \bA_{1,1})\bD\Big],\\
\cL^{\bS}_{\Gamma_0}&=\tfrac12\Big[\bA_{1,2}\bC +\bA_{1,2}^{(12)}\bD\Big],\\
\cL^{\bS}_{\Gamma_a}&=\tfrac12\Big[\bA_{0,3}\bA_{1,1}\bC^2+\bA_{0,3}^{(12)}\bA_{1,1}\bC\bD\Big],\\
\cL^{\bS}_{\Gamma_b}&=\tfrac18\Big[ \bA_{0,4}\bC^2+2\bA_{0,4}^{(12)}\bC\bD+\bA_{0,4}^{(12)(34)}\bD^2+2\bA_{0,4}^{(12)(34)}(p_2\pl \bC)+2\bA_{0,4}^{(1234)}(p_2\pl \bD)\Big],\\
\cL^{\bS}_{\Gamma_c}&=\tfrac1{12}\Big[ \bA_{0,3}^2\bC^3+3\big(\bA_{0,3}^{(12)}\big)^2\bC\,(p_2\pl \bC)+2\big(\bA_{0,3}^{(123)}\big)^2(p_3\pl \bC)\\
&\qquad\quad+(p_2\pl \bA_{0,3})\big(\bD^3+3\bD\,(p_2\pl \bC)\big)+2\big(p_2\pl \bA_{0,3}^{(123)}\big)(p_3\pl \bD)\Big],\\
\cL^{\bS}_{\Gamma_d}&=\tfrac18\Big[\bA_{0,3}^2\bC^3+2\bA_{0,3}^{(12)}\bA_{0,3}\bC^2\bD+\big(\bA_{0,3}^{(12)}\big)^2\bC\bD^2\\
&\qquad\quad+2(p_2\pl \bA_{0,3})(p_2\pl \bC)\bD+2\big(p_2\pl \bA_{0,3}^{(12)}\big)(p_2\pl \bD)\bD\Big].
\end{align*}
\end{lemm}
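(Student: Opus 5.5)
The plan is to apply Theorem \ref{graphsum} graph by graph. For each of the seven stable graphs of Proposition \ref{28}, I will list the elements $\alpha\in\Aut(\Gamma)$ up to conjugacy, with the size of each conjugacy class. For a representative $\alpha$ I record four things: the $\langle\alpha\rangle$-orbits on vertices and on edges, the permutation $\alpha^{\ell}(v_O)$ induced on the half-edges at a representative vertex, and whether $\alpha^{\ell}$ preserves or flips a representative edge. Summing class size times the product in \eqref{23S} and dividing by $|\Aut\Gamma|$ gives each formula. Since $A^{\bS,\beta}_{g,d}$ depends only on the conjugacy class of $\beta$, it suffices to identify the cycle type of the induced half-edge permutation. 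For $d=3$ this is one of $\id,(12),(123)$; for $d=4$ it is one of $\id,(12),(12)(34),(1234)$; for $d=2$ it is $\id$ or $(12)$.

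The easy graphs go as follows.
\begin{enumerate}
\item $\varnothing$: there is nothing to do.
\item $\Gamma_1$: the identity gives $\bA_{1,1}^2\bC$. The swap has one vertex orbit of length 2 with $\alpha^2=\id$, hence $p_2\pl\bA_{1,1}$. It fixes the edge and flips it, hence $\bD$.
\item $\Gamma_0$: the flip fixes the vertex, acts by $(12)$ on its two half-edges, and flips the loop.
\item $\Gamma_a$: the flip fixes both vertices. It acts by a transposition at the genus-$0$ vertex, swapping the loop's half-edges, and trivially at the genus-$1$ vertex. It flips the loop and preserves the bridge, hence $\bC\bD$.
\end{enumerate}

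For $\Gamma_b$, I view $D_4$ as the symmetry group of the two loops, generated by the flips of each loop and the swap of the loops. The classes are as follows.
\begin{enumerate}
\item The identity (1 element) gives $\bC^2$.
\item A single flip (2 elements) acts by a transposition at the vertex and gives $\bC\bD$.
\item Both flips (1 element) act by $(12)(34)$ and give $\bD^2$.
\item A swap of the loops without flips (2 elements) acts by $(13)(24)$. The edge orbit has length 2 and $\alpha^2=\id$ preserves it, giving $p_2\pl\bC$.
\item A swap composed with one flip (2 elements) acts by a 4-cycle. Here $\alpha^2$ flips both loops, giving $p_2\pl\bD$.
\end{enumerate}
These match the stated coefficients $1,2,1,2,2$.

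$\Gamma_d$ is similar. Its $D_4$ consists of the loop flips and the swap of the two ends, which also reverses the bridge.
\begin{enumerate}
\item The identity gives $\bA_{0,3}^2\bC^3$.
\item One loop flipped (2 elements) gives $\bA^{(12)}_{0,3}\bA_{0,3}\bC^2\bD$.
\item Both loops flipped gives $(\bA^{(12)}_{0,3})^2\bC\bD^2$.
\item A swap without flips (2 elements): the vertex orbit has length 2 with $\alpha^2=\id$, giving $p_2\pl\bA_{0,3}$. The loops form an orbit with $\alpha^2$ preserving, giving $p_2\pl\bC$. The bridge is fixed and flipped, giving $\bD$.
\item A swap with one flip (2 elements): $\alpha^2$ flips both loops and acts on the half-edges at a vertex by a transposition, giving $p_2\pl\bA^{(12)}_{0,3}$ and $p_2\pl\bD$, together with $\bD$ for the bridge.
\end{enumerate}

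The main bookkeeping obstacle is $\Gamma_c$ with $\Aut=\bS_3\times\ZZ/2$, where $\bS_3$ permutes the three edges and $\ZZ/2$ swaps the two vertices. I will treat the six classes of $(\sigma,\epsilon)$ separately.
\begin{enumerate}
\item $(\id,0)$ gives $\bA_{0,3}^2\bC^3$.
\item $(\text{transposition},0)$, 3 elements: both vertices are fixed with induced $(12)$. The edges split into a fixed edge, preserved, and an orbit of length 2, giving $\bC\,(p_2\pl\bC)$.
\item $(\text{3-cycle},0)$, 2 elements: this gives $(\bA^{(123)}_{0,3})^2\,p_3\pl\bC$.
\item $(\id,1)$, 1 element: the vertices form an orbit with $\alpha^2=\id$, and every edge is fixed and flipped, giving $(p_2\pl\bA_{0,3})\bD^3$.
\item $(\text{transposition},1)$, 3 elements: $\alpha^2=\id$, so $p_2\pl\bA_{0,3}$. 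The fixed edge is flipped, giving $\bD$. The other two edges form an orbit on which $\alpha^2=\id$ preserves, giving $p_2\pl\bC$.
\item $(\text{3-cycle},1)$, 2 elements: $\alpha$ has order 6. The vertex orbit has length 2 and $\alpha^2$ induces a 3-cycle, giving $p_2\pl\bA^{(123)}_{0,3}$. The edges form a single orbit of length 3, and $\alpha^3$ swaps the vertices, hence flips each edge, giving $p_3\pl\bD$.
\end{enumerate}
Collecting the six classes yields the stated formula for $\cL^{\bS}_{\Gamma_c}$. The care needed throughout is in checking, for each orbit, whether $\alpha^{\ell}$ rather than $\alpha$ flips the representative edge, and what cycle type $\alpha^{\ell}$ induces on half-edges.
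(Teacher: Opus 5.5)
Your proposal is correct and is the paper's argument. The paper only says the lemma is immediate from \eqref{23S}, and your class-by-class bookkeeping for the seven graphs is exactly the verification left implicit there; this includes the order-$6$ elements of $\Aut(\Gamma_c)$, where $\alpha^2$ induces a $3$-cycle at each vertex and $\alpha^3$ flips every edge. One cosmetic point: the size-$2$ classes you call \emph{swap without flips} in $\Gamma_b$ and $\Gamma_d$ also contain the swap composed with both flips, which also squares to the identity and so contributes the same term.
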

\begin{proof} This is immediate from \eqref{23S}.
\end{proof}

\def\bE{\mathbf{E}}
\def\bF{\mathbf{F}}
We thus obtain the following genus two formula. 
\begin{theo}\label{L2S}
Write $\bE=\cE^{\bS}$, $\bF=\cF^{\bS}$, $\bC=C^{\bS}$, $\bD=D^{\bS}$ and $\bar\be=\be(-1)$. Then $\cL^{\bS}_2$ is the sum of the following seven terms:
\begin{align*}
\cL^{\bS}_{\varnothing}&=\bh(1)\bh(x)\bF,\\
\cL^{\bS}_{\Gamma_1}&=\tfrac12\Big[\bh(x)^2\bE^2\bC+\big(p_2\pl(\bh(x)\bE)\big)\bD\Big],\\
\cL^{\bS}_{\Gamma_0}&=\tfrac12\Big[\bh(x)\bar\be\,(x\bE+\partial\bE)\bC+\bh(x)\be(1)\,(x\bE-\partial\bE)\bD\Big],\\
\cL^{\bS}_{\Gamma_a}&=\tfrac12\Big[\bh(x)^2\bar\be^2\bE \bC^2+\bh(x)^2(p_2\pl\bar\be)\bE \bC\bD\Big],\\
\cL^{\bS}_{\Gamma_b}&=\tfrac18\Big[(x-2)\bh(x)\bar\be^3\bC^2+2x\,\bh(x)\bar\be\,(p_2\pl\bar\be)\bC\bD\\
&\qquad+(x-2)\bh(x)\bh(1)(p_2\pl\bar\be)^2\big(\bD^2+2\,p_2\pl \bC\big)+2x\,\bh(x)\bh(1)(p_4\pl\bar\be)(p_2\pl \bD)\Big],\\
\cL^{\bS}_{\Gamma_c}&=\tfrac1{12}\Big[\bh(x)^2\bar\be^4\bC^3+3\,\bh(x)^2(p_2\pl\bar\be)^2\bC\,(p_2\pl \bC)+\big(p_2\pl(\bh(x)\bar\be^2)\big)\big(\bD^3+3\bD\,(p_2\pl \bC)\big)\\
&\qquad+2\,\bh(x)^2\bh(1)^2(p_3\pl\bar\be)^2(p_3\pl \bC)+2\big(p_2\pl(\bh(x)\bh(1)(p_3\pl\bar\be))\big)(p_3\pl \bD)\Big],\\
\cL^{\bS}_{\Gamma_d}&=\tfrac18\Big[\bh(x)^2\bar\be^4\bC^3+2\,\bh(x)^2\bar\be^2(p_2\pl\bar\be)\bC^2\bD+\bh(x)^2(p_2\pl\bar\be)^2\bC\bD^2\\
&\qquad+2\big(p_2\pl(\bh(x)\bar\be^2)\big)(p_2\pl \bC)\bD+2\big(p_2\pl(\bh(x)(p_2\pl\bar\be))\big)(p_2\pl \bD)\bD\Big],
\end{align*}
where $\bC=C^\bS$ and $\bD=D^\bS$ are given by Proposition \ref{LM}, $\bE=\cE^\bS$ by Proposition \ref{40} and $\bF=\cF^\bS$ by Proposition \ref{EF}.
\end{theo}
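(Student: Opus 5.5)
The plan is to start from the decomposition \eqref{47}, which rests on Theorem \ref{graphsum} and the list of seven graphs in Proposition \ref{28}. Lemma \ref{54} already writes each of the seven terms $\cL^{\bS}_\Gamma$ through the twisted characteristics $\bA^{\beta}_{g,d}$, $\bC$ and $\bD$. The proof then reduces to two substitutions. First, replace each $\bA^{\beta}_{g,d}$ by its closed form from Proposition \ref{Agen0} (genus zero vertices) and Proposition \ref{Agen1} (genus one and two vertices). Second, simplify using the fact that plethysm by $p_k$ is a ring homomorphism, so that $p_2\pl(fg)=(p_2\pl f)(p_2\pl g)$ and $p_k\pl(p_m\pl f)=p_{km}\pl f$.

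I will treat the graphs one at a time.
\begin{itemize}
\item For $\varnothing$, the formula is $\bA_{2,0}=\bh(1)\bh(x)\bF$.
\item For $\Gamma_1$, substitute $\bA_{1,1}=\bh(x)\bE$, which gives $\bA_{1,1}^2=\bh(x)^2\bE^2$; the second summand stays as $p_2\pl(\bh(x)\bE)$.
\item For $\Gamma_0$, substitute the two formulas for $\bA_{1,2}$ and $\bA_{1,2}^{(12)}$ directly.
\item For $\Gamma_a$, use $\bA_{0,3}=\bh(x)\bar\be^2$ and $\bA^{(12)}_{0,3}=\bh(x)(p_2\pl\bar\be)$, each multiplied by $\bA_{1,1}=\bh(x)\bE$.
\item For $\Gamma_b$, combine the third and fourth terms of Lemma \ref{54}. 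They share the coefficient $\bA^{(12)(34)}_{0,4}$, so they merge into $(x-2)\bh(x)\bh(1)(p_2\pl\bar\be)^2(\bD^2+2\,p_2\pl\bC)$.
\item For $\Gamma_c$ and $\Gamma_d$, expand the squares of the $\bA_{0,3}^\beta$ and push $p_2$ inside products. For instance, $p_2\pl\bA^{(123)}_{0,3}=p_2\pl(\bh(x)\bh(1)(p_3\pl\bar\be))$ and $(\bA^{(123)}_{0,3})^2=\bh(x)^2\bh(1)^2(p_3\pl\bar\be)^2$.
\end{itemize}

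The real content sits in Lemma \ref{54}, so I would recheck it. For each automorphism group I would list the conjugacy classes with their sizes. For each class I would record the induced permutations $\alpha^{\ell(O)}(v_O)$ of half-edges at fixed vertices, the orbit lengths on vertices and edges, and whether $\alpha^{\ell(P)}$ flips each edge; these then feed into \eqref{23S}.

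\begin{itemize}
\item For $\Gamma_b$, I would run through the five classes of $D_4$ acting on the four half-edges of the two loops. The identity (size $1$) fixes both loops unflipped and gives $\bC^2$. The flips of a single loop (size $2$) act as a transposition and give $\bC\bD$. The element flipping both loops (size $1$) acts as $(12)(34)$ and gives $\bD^2$. The swaps of the two loops (size $2$) also act as $(12)(34)$ and give $p_2\pl\bC$. The rotations of order $4$ (size $2$) act as a $4$-cycle and give $p_2\pl\bD$, since $\alpha^2$ flips both loops.
\item For $\Gamma_c$, I would organize the classes of $\bS_3\times\Z/2$ by whether they swap the two vertices.
\item For $\Gamma_d$, I would organize the classes of $D_4$ by whether they swap the two ends of the bridge.
\end{itemize}

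The main obstacle is the vertex-swapping classes in $\Gamma_c$ and $\Gamma_d$. There a vertex orbit has length $2$, and $\alpha^2$ acts on the half-edges at $v_O$ by some permutation $\beta$ that must be identified correctly. For the order-$6$ elements of $\Gamma_c$ (a $3$-cycle times the swap), $\alpha^2$ is a $3$-cycle at each vertex, and the edges form one orbit of length $3$ on which $\alpha^3$ flips. This gives $p_2\pl\bA^{(123)}_{0,3}$ times $p_3\pl\bD$, and I will check that this agrees with the Koszul and flip conventions of Lemma \ref{orbit}. Once these orbit data are verified, the theorem follows by substitution.
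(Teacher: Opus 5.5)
Your proposal is correct and is the paper's argument: Theorem \ref{L2S} is proved by substituting Propositions \ref{Agen0} and \ref{Agen1} into Lemma \ref{54}, and Lemma \ref{54} is read off from \eqref{23S}. Your conjugacy-class data for $\Gamma_b$, $\Gamma_c$ and $\Gamma_d$ are right, including the order-$6$ elements of $\Gamma_c$ giving $(p_2\pl \bA^{(123)}_{0,3})(p_3\pl\bD)$. They are exactly the check the paper leaves implicit when it calls Lemma \ref{54} immediate.
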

\begin{proof}
Substitute Propositions \ref{Agen0} and \ref{Agen1} into Lemma \ref{54}. 
\end{proof}

\begin{coro}[Theorem C]\label{37}
With $C(y), D(y)$ as in Corollary~\ref{25} and $\mathcal E(y),\mathcal F(y)$ as in \S\ref{S5.2}, $\cL_2(y)$ is the sum of the following seven terms:
\begin{align*}
\cL_{\varnothing}&=e^{(1+x)y}\mathcal F(y),\\
\cL_{\Gamma_1}&=\tfrac12\Big[e^{2xy}\mathcal E^2C+x^2D\Big],\\
\cL_{\Gamma_0}&=\tfrac12\Big[e^{(x-1)y}\big(x\mathcal E+\mathcal E'\big)C
              +e^{(1+x)y}\big(x\mathcal E-\mathcal E'\big)D\Big],\\
\cL_{\Gamma_a}&=\tfrac12\Big[e^{(2x-2)y}\mathcal EC^2+e^{2xy}\mathcal ECD\Big],\\
\cL_{\Gamma_b}&=\tfrac18\Big[(x-2)e^{(x-3)y}C^2+2x\,e^{(x-1)y}CD
              +(x-2)e^{(x+1)y}D^2+(4x-4)e^{(x+1)y}\Big],\\
\cL_{\Gamma_c}&=\tfrac1{12}\Big[e^{2(x-2)y}C^3+3e^{2xy}C+2e^{2(x+1)y}
              +D^3+3D+2\Big],\\
\cL_{\Gamma_d}&=\tfrac18\Big[e^{2(x-2)y}C^3+2e^{(2x-2)y}C^2D
              +e^{2xy}CD^2+4D\Big].
\end{align*}
\end{coro}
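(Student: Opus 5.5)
The plan is to apply the rank map $\rk$ term by term to Theorem \ref{L2S}, equivalently to Lemma \ref{54} combined with Corollary \ref{36}. Since $\rk$ is a ring homomorphism, it suffices to compute the rank of each factor. The inputs are as follows. By Lemma \ref{rankmap}, $\rk(p_\ell\pl f)=\psi^\ell(f(0))$ for $\ell\ge2$. The constant terms are $\bC(0)=\bD(0)=1$, since $L_0$ is a point. We have $\bA_{0,3}^{\beta}(0)=\HH^\beta_{\cM_{0,3}}=1$ for every $\beta$, and $\bA_{1,1}(0)=\HH_{\cM_{1,1}}=x$. Hence every factor $p_2\pl\bC$, $p_3\pl\bC$, $p_2\pl\bD$, $p_3\pl\bD$, $p_2\pl\bA_{0,3}$ and $p_2\pl\bA^{(123)}_{0,3}$ has rank $1$, while $p_2\pl\bA_{1,1}$ has rank $\psi^2(x)=x^2$. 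The unplethysed factors are handled by Corollary \ref{36}, together with $\rk(\bC)=C$ and $\rk(\bD)=D$ from Corollary \ref{25}.

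Then I go stratum by stratum. For $\cL_\varnothing$ and $\cL_{\Gamma_0}$ the answer is read off directly from $A_{2,0}$, $A_{1,2}$ and $A^{(12)}_{1,2}$. For $\Gamma_1$, the first term is $A_{1,1}^2C=e^{2xy}\cE^2C$ and the second is $x^2D$. For $\Gamma_a$, one uses $A_{0,3}A_{1,1}=e^{(2x-2)y}\cE$ and $A^{(12)}_{0,3}A_{1,1}=e^{2xy}\cE$.

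For $\Gamma_b$, the five terms of Lemma \ref{54} become $(x-2)e^{(x-3)y}C^2$, $2xe^{(x-1)y}CD$ and $(x-2)e^{(x+1)y}D^2$, then $2(x-2)e^{(x+1)y}$ from the $p_2\pl\bC$ term, and $2xe^{(x+1)y}$ from the $p_2\pl\bD$ term. The last two combine to $(4x-4)e^{(x+1)y}$.

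For $\Gamma_c$, the terms are as follows.
\begin{itemize}
\item $A_{0,3}^2C^3=e^{2(x-2)y}C^3$.
\item $3(A^{(12)}_{0,3})^2C=3e^{2xy}C$.
\item $2(A^{(123)}_{0,3})^2=2e^{2(x+1)y}$.
\item $D^3+3D$, from the factor $p_2\pl\bA_{0,3}$ of rank $1$.
\item $2$, from the last term.
\end{itemize}

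For $\Gamma_d$, the terms are as follows.
\begin{itemize}
\item $e^{2(x-2)y}C^3$.
\item $2e^{xy}e^{(x-2)y}C^2D=2e^{(2x-2)y}C^2D$.
\item $e^{2xy}CD^2$.
\item $2D+2D=4D$, from the last two terms.
\end{itemize}
Summing the seven contributions, and using $\cL_2=\sum_\Gamma\cL_\Gamma$, gives Theorem C.

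The only genuinely delicate point is upstream: the correctness of the automorphism bookkeeping in Lemma \ref{54}, which I take as given. That bookkeeping covers the cycle types of $\alpha$ on vertices and edges and whether $\alpha^{\ell}$ flips an edge. Given it, the step most likely to cause errors is tracking which factors are plethysed by $p_\ell$ with $\ell\ge2$, and therefore collapse to constants, and which survive as functions of $y$. As an alternative check, I would verify the $\Gamma_b$, $\Gamma_c$ and $\Gamma_d$ terms directly from Corollary \ref{24}. In that approach one counts, for each $\alpha$, the fixed vertices with their induced permutations and the numbers $\kappa_\pm(\alpha)$. For example, for $\Gamma_c$ with $\alpha$ swapping the two vertices and fixing all three edges, every edge is flipped. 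The vertex orbit then gives $\psi^2(1)=1$, yielding $D^3$, consistent with the above.
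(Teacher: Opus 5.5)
Your proposal is correct and is essentially the paper's proof. The paper applies the rank map to Theorem \ref{L2S}, using Lemma \ref{rankmap} to collapse each $p_\ell\pl f$ with $\ell\ge2$ to $\psi^\ell(f(0))$; you instead apply it to Lemma \ref{54} and read off the unplethysed factors from Corollary \ref{36}, which is the same computation done in a different order, and your term-by-term bookkeeping (including $x^2D$ for $\Gamma_1$ and $(4x-4)e^{(x+1)y}$ for $\Gamma_b$) matches.
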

\begin{proof}
Apply the rank map to Theorem \ref{L2S}. By Lemma \ref{rankmap}, $\rk(\bh(t))=\rk(\be(t))=e^{ty}$, $\rk(\partial f)=\frac{d}{dy}\rk(f)$, and a factor $p_\ell\pl f$ with $\ell\ge2$ contributes the constant term $\psi^\ell(f(0))$: this is $\psi^2(x)=x^2$ for $f=\bh(x)\cE$ and $1$ for $f=\bar\be$, $C$, $D$, $\bh(x)\bar\be^2$, $\bh(x)(p_2\pl\bar\be)$ and $\bh(x)\bh(1)(p_3\pl\bar\be)$. In $\cL_{\Gamma_b}$, the terms $2(x-2)e^{(x+1)y}+2xe^{(x+1)y}$ combine to $(4x-4)e^{(x+1)y}$.
\end{proof}

All the terms in Theorem \ref{L2S} are readily computable except $\cF^{\bS}$, which requires the dimensions $s_{a-b,b+3}$ and $s_{0,b+3}$ for $a\ge b\ge 0$ with $a+b$ even. We thus obtain the following.
\begin{coro}\label{62}
Theorem \ref{L2S} computes $\HH^{\bS}_{\bcM_{2,(0^+)^n}}$, and hence $\HH^{\bS}_{\bcM_{2,n}}$ by Theorem \ref{A}, for every $n$ such that $s_{j,k}=\dim S_{j,k}(\mathrm{Sp}_4(\Z))$ is known for all even $j\ge0$ and $k\ge3$ with $j+2k\le 2\lfloor n/2\rfloor+6$. For $n\le19$, the required dimensions are exactly
\[
s_{0,10}=s_{0,12}=s_{6,8}=s_{8,8}=s_{4,10}=s_{12,6}=1,\qquad\text{all others }0,
\]
so the computation is unconditional for all $n\le 19$. Since the dimensions $s_{j,k}$ contribute only to odd degrees, the $\bS_n$-representations on all $H^{p,q}(\bcM_{2,(0^+)^n})$ and $H^{p,q}(\bcM_{2,n})$ with $p+q$ even are unconditionally determined by Theorem \ref{L2S} for every $n$.
\end{coro}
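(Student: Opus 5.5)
We need to prove Corollary \ref{62}: the degree-$n$ part of the formula needs only $s_{j,k}$ with $j+2k\le 2\lfloor n/2\rfloor+6$; the list of nonzero ones for $n\le 19$; and the claim that the $s_{j,k}$ only affect odd degrees.

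\emph{Step 1: degree bookkeeping.} In Theorem \ref{L2S}, every term is built from $\bh(t)$, $\be(t)$, $\bC$, $\bD$, $\bE$, plethysms of these, and $\bF$. Only $\bF$ involves the $s_{j,k}$ (Proposition \ref{40} and the branching coefficients involve only the $s_k$). Since all series have nonnegative degree in the $p_k$, the degree-$n$ part of $\cL^{\bS}_2$ only uses the part of $\bF$ of degree $\le n$.

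By Proposition \ref{EF} and Lemma \ref{35}, that part involves $\HH_{\sA_2;\V_{a,b}}$ only for $a+b\le|\lambda|\le 2\lfloor n/2\rfloor$, since $|\lambda|$ is even. In \eqref{46}, the Siegel dimension appearing is $s_{a-b,b+3}$, with $j+2k=a+b+6\le 2\lfloor n/2\rfloor+6$, and $j=a-b$ is even.

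The passage to $\bcM_{2,n}$ goes through Theorem \ref{A}, via $\cH^{\bS}_2=\cL^{\bS}_2\pl Z^{\langle-1\rangle}$. Since $Z^{\langle-1\rangle}=h_1+(\text{terms of degree}\ge3)$, the degree-$n$ part of $\cH^{\bS}_2$ uses only the degree-$\le n$ parts of $\cL^{\bS}_2$. So the same bound on $j+2k$ suffices there as well.

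\emph{Step 2: the list for $n\le 19$.} The bound becomes $j+2k\le 24$ with $j$ even and $k\ge 3$. I would cite the known dimension formulas for $S_{j,k}(\mathrm{Sp}_4(\ZZ))$: Igusa for $j=0$, and Tsushima/Petersen for $k\ge 3$ (or the tables of Bergström–Faber–van der Geer). Scanning these tables should show that the only nonzero values in this range are the six listed. This table check is the main obstacle.

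Two points need care:
\begin{itemize}
\item \emph{Which $j=0$ values to use.} For $j=0$, Corollary \ref{43} uses $s_{0,b+3}$ with $b$ odd only in the Saito–Kurokawa-corrected form $s_{0,k}-s_{2k-2}$, together with $s_{2k-2}$. So the list must contain the full $s_{0,10}$ and $s_{0,12}$ (these are Saito–Kurokawa lifts), not only the non-lift parts.
\item \emph{Small weights.} Small $k$, notably $k=3$, must be confirmed to vanish.
\end{itemize}
Once the list is checked, the computation for $n\le19$ is unconditional.

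\emph{Step 3: parity.} In \eqref{46}, every term with a coefficient $s_{j,k}$ has the form $s_{j,k}W_{j,k}(u,v)$. This includes the $j=0$ corrected term, where $s_{2k-2}(u^{2k-3}+v^{2k-3})$ involves only elliptic data. The monomials of $W_{j,k}$ have total degree $j+2k-3$, which is odd because $j$ is even. Multiplication by Tate twists $x^m$, by $\bh(t)$, $\be(t)$, and by the other factors in Theorem \ref{L2S}, as well as plethysm with $Z^{\langle-1\rangle}$, is graded in $u,v$ and involves only monomials of even total degree except through $\bE$ and $\bF$. Here one must also check how Adams operations act: $\psi^k$ only enters through $p_k\pl$ applied to $\bE$, never to $\bF$. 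So $s_{j,k}$ contributes only to monomials $u^pv^q$ with $p+q$ odd.

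Since $\bcM_{2,(0^+)^n}$ and $\bcM_{2,n}$ are smooth and proper with pure cohomology, Definition \ref{SHD} identifies the coefficient of $u^pv^q$ with $\pm\ch_n H^{p,q}$. Therefore the even-degree pieces are independent of the $s_{j,k}$, and hence unconditionally determined for every $n$.
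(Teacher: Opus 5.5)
Your proposal is correct and follows the paper's proof essentially step for step. It uses the same bookkeeping $j+2k=a+b+6\le 2\lfloor n/2\rfloor+6$, the same appeal to the classical dimension tables for the list at $n\le 19$, and the same parity argument based on the odd total degree $j+2k-3$ of $W_{j,k}$. In Step 3 you should say more sharply what the parity argument actually uses. First, $\cF^{\bS}$ enters $\cL^{\bS}_2$ only linearly, through $\cL^{\bS}_\varnothing=\bh(1)\bh(x)\cF^{\bS}$, so its $s_{j,k}$-part is never multiplied by $\cE^{\bS}$. Second, plethysm with $Z^{\langle-1\rangle}$ leaves the coefficients of $\cL^{\bS}_2$ alone and only multiplies them by polynomials in $x$. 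Together these show that an $s_{j,k}$-term is never paired with an odd-degree class and so cannot land in even degree.
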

\begin{proof}
The part of $\cF^{\bS}$ of degree at most $n$ involves the local systems $\mathbb S_\lambda\V$ with $|\lambda|\le n$ even, hence $\V_{a,b}$ with $a+b\le 2\lfloor n/2\rfloor$, and by \eqref{46} the dimensions $s_{j,k}$ with $j=a-b$ and $k=b+3$, so $j+2k=a+b+6$. The listed values are classical (cf.\ \cite{Pet}). The dimensions $s_{j,k}$ enter \eqref{46} only through monomials of odd total degree, since $j$ is even. They enter $\cL^{\bS}_2$ linearly, through $\cL^{\bS}_\varnothing=\bh(1)\bh(x)\cF^{\bS}$, multiplied by series whose coefficients are polynomials in $x$, and the plethysm $\cL^{\bS}_2\pl Z^{\langle-1\rangle}$ acts on the coefficients of $\cL^{\bS}_2$ linearly, multiplying them by polynomials in $x$. Hence they affect only monomials $u^pv^q$ with $p+q$ odd.
\end{proof}

\bigskip
\section{Computations and applications}\label{S7}

\subsection{Implementation and checks}\label{S7.1}
We implemented Theorems \ref{A}, \ref{graphsum} and \ref{L2S} in exact rational arithmetic, representing elements of $\Lam$ in the power sum basis with coefficients in $\QQ[u,v]$, truncated in degree, and converting to Schur functions by the Murnaghan-Nakayama rule. The plethystic inverse $Z^{\langle-1\rangle}$ is obtained from the fixed point iteration of Remark \ref{Arem}. With this exact implementation, we computed $\HH^{\bS}_{\bcM_{2,n}}$ in all degrees for $n\le18$, and 
$\HH^{\bS}_{\bcM_{2,n}}$ coincides with the $\bS_n$-equivariant Euler characteristic of \cite{Ber}, obtained independently by counting points over finite fields, Schur function by Schur function and Hodge type by Hodge type, under $L\mapsto x$, $\mathsf S_k\mapsto u^{k-1}+v^{k-1}$ and $\mathsf S_{j,k}\mapsto W_{j,k}(u,v)$. This includes all terms involving $\mathsf S_{12},\mathsf S_{16},\mathsf S_{18},\mathsf S_{20},\mathsf S_{22}$ and the Siegel motives $\mathsf S_{6,8},\mathsf S_{8,8},\mathsf S_{4,10}$. 

\subsection{Betti numbers}\label{S7.2}
The following is the list of the Betti numbers $b_i(\bcM_{2,n})$ for $1\le n\le 19$ computed by Corollary \ref{37} and Proposition \ref{7}. They are listed as $b_0, b_2, b_4, \cdots$ up to the middle, the remaining being determined by Poincar\'e duality; nonzero odd Betti numbers are listed separately.
\begin{footnotesize}\raggedright\setlength{\emergencystretch}{6em}
\begin{itemize}[leftmargin=2.6em,itemsep=1.5pt,rightmargin=-1.2em]
\item[$n=1$:] 1, 3, 5
\item[$n=2$:] 1, 6, 14
\item[$n=3$:] 1, 12, 44, 67
\item[$n=4$:] 1, 24, 144, 333
\item[$n=5$:] 1, 48, 474, 1668, 2501
\item[$n=6$:] 1, 96, 1547, 8256, 18296
\item[$n=7$:] 1, 192, 4986, 39969, 129342, 189289
\item[$n=8$:] 1, 384, 15870, 188697, 880649, 1849853
\item[$n=9$:] 1, 768, 49950, 869522, 5774876, 17127132, 24427499
\item[$n=10$:] 1, 1536, 155713, 3920229, 36556626, 150785735, 300496807;\; odd: $b_{13}{=}2$
\item[$n=11$:] 1, 3072, 481534, 17340292, 224167059, 1267763370, 3469316099, 4828313319;\; odd: $b_{13}{=}44$
\item[$n=12$:] 1, 6144, 1479324, 75452435, 1336540026, 10227205166, 37841246633, 71850818691;\; odd: $b_{13}{=}528$, $b_{15}{=}1672$
\item[$n=13$:] 1, 12288, 4520434, 323751896, 7775916100, 79534380552, 392351726031, 999268944378, 1359879381775;\; odd: $b_{13}{=}4576$, $b_{15}{=}33748$
\item[$n=14$:] 1, 24576, 13754503, 1372779858, 44288590812, 598892734287, 3889207335069, 13089118559365, 23794703831110;\; odd: $b_{13}{=}32032$, $b_{15}{=}468468$, $b_{17}{=}1073670$
\item[$n=15$:] 1, 49152, 41710834, 5762965223, 247657578541, 4384047893932, 37050861383436, 162586528339655, 388563330869810, 518215971346233;\; odd: $b_{13}{=}192192$, $b_{15}{=}5065060$, $b_{17}{=}22665942$
\item[$n=16$:] 1, 98304, 126157562, 23990725337, 1363084174826, 31309234326027, 340825440518296, 1926857354714685, 5969233095899064, 10437928997918556;\; odd: $b_{13}{=}1025024$, $b_{15}{=}45804616$, $b_{17}{=}361450596$, $b_{19}{=}707075048$
\item[$n=17$:] 1, 196608, 380800502, 99171904306, 7400382446483, 218821428425418, 3039999109374664, 21905585131953561, 86870342196086929, 196238344880596871, 257023830913166395;\; odd: $b_{13}{=}4978688$, $b_{15}{=}362189828$, $b_{17}{=}4705416196$, $b_{19}{=}16185231684$
\item[$n=18$:] 1, 393216, 1147645933, 407557387095, 39707156779317, 1500738117616881, 26388426769441311, 240035299025783896, 1204960112715584960, 3470729810061890755, 5862207384250863896;\; odd: $b_{13}{=}22404096$, $b_{15}{=}2579397964$, $b_{17}{=}52567385312$, $b_{19}{=}294944677504$, $b_{21}{=}520847892134$
\item[$n=19$:] 1, 786432, 3454670582, 1666760113266, 210900933524186, 10123795363087172, 223636953444606116, 2545851833702518589, 16015974374873130211, 58140062308346077328, 124928421013039794593, 160994260381274623229;\; odd: $b_{13}{=}94595072$, $b_{15}{=}16892795348$, $b_{17}{=}521282033272$, $b_{19}{=}4520519253012$, $b_{21}{=}13046444012836$
\end{itemize}\end{footnotesize}
For $n\le 18$, these numbers agree with \cite{Ber}, which was obtained by an independent method.

\medskip
The even degree Betti numbers can be computed for any $n$ by Corollary \ref{62}, within the limits of computing resources. The following is the list of even degree Betti numbers $b_{2i}$ of $\bcM_{2,n}$ for $20\le n\le 30$.
\begin{scriptsize}\raggedright\setlength{\emergencystretch}{6em}
\begin{itemize}[leftmargin=2.6em,itemsep=2pt,rightmargin=-1.2em]
\item[$n=20$:] 1, 1572864, 10390097240, 6788967928547, 1110449368995637, 67312967783390146, 1855577897064966799, 26231656835173147385, 204956138295013257688, 927957831395183416149, 2506207708425389496658, 4103924244941702378515;\; then symmetric by duality.
\item[$n=21$:] 1, 3145728, 31227965930, 27560365896424, 5803115546177229, 441926519039333550, 15110637767035766181, 263421107129957808644, 2535659787403291098951, 14185426510619816828634, 47637435889967658937557, 97894835427218888126277, 124334448501272731093443;\; then symmetric by duality.
\item[$n=22$:] 1, 6291456, 93810778419, 111576179375916, 30131782967467137, 2869308074796375501, 121027583326470029208, 2585424423744717116975, 30438124401232289237290, 208643551740682373074748, 862827790064231388955992, 2200601363120508763222088, 3504739020781363336182808;\; then symmetric by duality.
\item[$n=23$:] 1, 12582912, 281710211274, 450687852851565, 155591399163935606, 18449051850314668338, 955211410306486848898, 24862430810995162354161, 355650193065102381640541, 2964566621592039188423811, 14966256969682129731720883, 46903693409464498001988913, 92611273193924710760621808, 116094260133573453408044295;\; then symmetric by duality.
\item[$n=24$:] 1, 25165824, 845736714614, 1817083705496937, 799622406796722839, 117614304338982437915, 7441234437817057755645, 234763311986523109572817, 4056211193209523379555502, 40836213491596315476152274, 249703200569530212928325024, 953007253877444650080299131, 2309201985769333344660304648, 3586864947030781583136217552;\; then symmetric by duality.
\item[$n=25$:] 1, 50331648, 2538527159694, 7315018450336578, 4092769070505916422, 744200933804508018532, 57299653723595404845391, 2180842617818148407435840, 45266808440862592407849121, 547026854347634670507313257, 4022892123778066183188518667, 18546877858783783846566414353, 54644679811512926074824735915, 104085196163354920816614657760, 128944613163974356780618522239;\; then symmetric by duality.
\item[$n=26$:] 1, 100663296, 7618433610777, 29411600285657097, 20875455171152188148, 4678026610548637741259, 436694167851371281497525, 19964649411564915543575451, 495388038587975416991671760, 7145665431743738257685673830, 62797120822371748924365618201, 347177688641887505531945094716, 1233462205034226607309041504263, 2854159414431862335676042091906, 4334295662729170193244423076311;\; then symmetric by duality.
\item[$n=27$:] 1, 201326592, 22861458076206, 118136656961188256, 106159616118066133517, 29236604493578955525126, 3297716504321726239806969, 180382427750215154364113508, 5326647644845893944840461112, 91243471642131513898153958459, 952663873753142204618445002213, 6274077445467378217747761521361, 26677586319137994070980793620483, 74356276814603904079260326415152, 137102690953460737406601908865947, 168038815025526354165044911236453;\; then symmetric by duality.
\item[$n=28$:] 1, 402653184, 68597628235540, 474128183174425635, 538486928953790524476, 181798152179301213740894, 24699912664405216433690201, 1610648878795083648146322280, 56370009443749098483911529483, 1141355745881043521628520321290, 14083163583135238781264121831110, 109822692881429553058949110278763, 555059988456851567388892209826760, 1849170923836513683305957637684080, 4104317343031403263798808226163790, 6106347997110700598378020324739724;\; then symmetric by duality.
\item[$n=29$:] 1, 805306368, 205821338871970, 1901601066337595656, 2725491733677375970626, 1125432276637520150809212, 183658006666108574204562087, 14229736541719480994100524892, 588016401168247388693614572108, 14013027849795204031489136327161, 203354271799664304810342762230110, 1867434037741972601280293420898502, 11149176714551742419729740231123904, 44085585606514575854197582296539463, 116864315561984369199652789262846785, 209219705273510380800603371220578363, 253943624426244256706970996467486347;\; then symmetric by duality.
\item[$n=30$:] 1, 1610612736, 617524951472223, 7622734254808073894, 13769150396608759042098, 6939896011815182735396915, 1356748741367446789544633163, 124520722150222113097135504209, 6054363943466834154765520480986, 169151722185036820099033178205047, 2874206430634335204182005776794328, 30926904748770490522846713617510025, 216881578337956966497716323094973427, 1011382453526321054963483885734704726, 3179138629942668279786864412324616294, 6794394643209073297938553715734097797, 9921781367043583627366829407357601433;\; then symmetric by duality.
\end{itemize}\end{scriptsize}

\subsection{$\bS_n$-representations for small $n$}\label{S7.3}
For $n\le 9$, all the cohomology of $\bcM_{2,n}$ is of Tate type. The following table lists the $\bS_n$-representations $H^{2k}(\bcM_{2,n})=H^{k,k}(\bcM_{2,n})$ for $n\le7$ and $2k\le n+3$, as sums $\sum_\lambda m_\lambda s_\lambda$ meaning $\bigoplus_\lambda V_\lambda^{\oplus m_\lambda}$; the remaining degrees follow by Poincar\'e duality, which is $\bS_n$-equivariant. The data for all degrees and $n\le18$, and for even degrees and $n\le30$, are available from the authors.

{\small
\begin{longtable}{@{}l p{0.86\textwidth}@{}}
\toprule
\multicolumn{2}{l}{$n=1$}\\
$H^{0}$ & $s_{1}$\\
$H^{2}$ & $3s_{1}$\\
$H^{4}$ & $5s_{1}$\\
\midrule
\multicolumn{2}{l}{$n=2$}\\
$H^{0}$ & $s_{2}$\\
$H^{2}$ & $5s_{2}+s_{1^{2}}$\\
$H^{4}$ & $11s_{2}+3s_{1^{2}}$\\
\midrule
\multicolumn{2}{l}{$n=3$}\\
$H^{0}$ & $s_{3}$\\
$H^{2}$ & $6s_{3}+3s_{21}$\\
$H^{4}$ & $20s_{3}+12s_{21}$\\
$H^{6}$ & $28s_{3}+19s_{21}+s_{1^{3}}$\\
\midrule
\multicolumn{2}{l}{$n=4$}\\
$H^{0}$ & $s_{4}$\\
$H^{2}$ & $8s_{4}+4s_{31}+2s_{2^{2}}$\\
$H^{4}$ & $33s_{4}+26s_{31}+12s_{2^{2}}+3s_{21^{2}}$\\
$H^{6}$ & $67s_{4}+60s_{31}+28s_{2^{2}}+10s_{21^{2}}$\\
\midrule
\multicolumn{2}{l}{$n=5$}\\
$H^{0}$ & $s_{5}$\\
$H^{2}$ & $9s_{5}+6s_{41}+3s_{32}$\\
$H^{4}$ & $49s_{5}+48s_{41}+31s_{32}+8s_{31^{2}}+6s_{2^{2}1}$\\
$H^{6}$ & $128s_{5}+156s_{41}+106s_{32}+42s_{31^{2}}+26s_{2^{2}1}+s_{21^{3}}$\\
$H^{8}$ & $181s_{5}+227s_{41}+159s_{32}+65s_{31^{2}}+43s_{2^{2}1}+3s_{21^{3}}$\\
\midrule
\multicolumn{2}{l}{$n=6$}\\
$H^{0}$ & $s_{6}$\\
$H^{2}$ & $11s_{6}+7s_{51}+5s_{42}+s_{3^{2}}$\\
$H^{4}$ & $68s_{6}+75s_{51}+62s_{42}+17s_{41^{2}}+21s_{3^{2}}+16s_{321}+3s_{2^{3}}$\\
$H^{6}$ & $229s_{6}+317s_{51}+292s_{42}+118s_{41^{2}}+108s_{3^{2}}+115s_{321}+9s_{31^{3}}+22s_{2^{3}}+6s_{2^{2}1^{2}}$\\
$H^{8}$ & $420s_{6}+641s_{51}+615s_{42}+278s_{41^{2}}+236s_{3^{2}}+277s_{321}+29s_{31^{3}}+53s_{2^{3}}+21s_{2^{2}1^{2}}$\\
\midrule
\multicolumn{2}{l}{$n=7$}\\
$H^{0}$ & $s_{7}$\\
$H^{2}$ & $12s_{7}+9s_{61}+6s_{52}+3s_{43}$\\
$H^{4}$ & $90s_{7}+109s_{61}+100s_{52}+28s_{51^{2}}+58s_{43}+34s_{421}+12s_{3^{2}1}+8s_{32^{2}}$\\
$H^{6}$ & $363s_{7}+580s_{61}+606s_{52}+258s_{51^{2}}+389s_{43}+331s_{421}+28s_{41^{3}}+140s_{3^{2}1}+91s_{32^{2}}+34s_{321^{2}}+10s_{2^{3}1}$\\
$H^{8}$ & $854s_{7}+1529s_{61}+1728s_{52}+831s_{51^{2}}+1153s_{43}+1133s_{421}+143s_{41^{3}}+489s_{3^{2}1}+335s_{32^{2}}+170s_{321^{2}}+4s_{31^{4}}+47s_{2^{3}1}+2s_{2^{2}1^{3}}$\\
$H^{10}$ & $1125s_{7}+2109s_{61}+2430s_{52}+1221s_{51^{2}}+1647s_{43}+1675s_{421}+228s_{41^{3}}+738s_{3^{2}1}+502s_{32^{2}}+275s_{321^{2}}+7s_{31^{4}}+77s_{2^{3}1}+6s_{2^{2}1^{3}}$\\
\midrule
\end{longtable}}

For example, $H^6(\bcM_{2,3})$ contains the alternating representation once, while $H^2(\bcM_{2,n})$ contains only representations $V_\lambda$ with at most two rows, by \S\ref{S7.1} (iv).

\subsection{Odd degree cohomology}\label{S7.4}
For $n\le 14$, the odd degree cohomology is as follows. We list the Hodge type $(p,q)$ with $p>q$, the corresponding motive, as identified by the Hodge types and Theorem \ref{42}, and the $\bS_n$-representation on $H^{p,q}$. The representation on $H^{q,p}$ is the same, and the remaining degrees follow by Poincar\'e duality. For $n\le 9$, there is no odd degree cohomology.

{\small
\begin{longtable}{@{}l l l l p{0.62\textwidth}@{}}
\toprule
$n$ & & $(p,q)$ & motive & $H^{p,q}(\bcM_{2,n})$\\
\midrule
$10$ & $H^{13}$ & $(12,1)$ & $\mathsf S_{12}(-1)$ & $s_{1^{10}}$\\
\midrule
$11$ & $H^{13}$ & $(12,1)$ & $\mathsf S_{12}(-1)$ & $2s_{21^{9}}+2s_{1^{11}}$\\
\midrule
$12$ & $H^{13}$ & $(12,1)$ & $\mathsf S_{12}(-1)$ & $3s_{31^{9}}+s_{2^{2}1^{8}}+4s_{21^{10}}+s_{1^{12}}$\\
$12$ & $H^{15}$ & $(13,2)$ & $\mathsf S_{12}(-2)$ & $s_{41^{8}}+7s_{31^{9}}+3s_{2^{2}1^{8}}+11s_{21^{10}}+3s_{1^{12}}$\\
\midrule
$13$ & $H^{13}$ & $(12,1)$ & $\mathsf S_{12}(-1)$ & $4s_{41^{9}}+2s_{321^{8}}+6s_{31^{10}}+2s_{2^{2}1^{9}}+2s_{21^{11}}$\\
$13$ & $H^{15}$ & $(13,2)$ & $\mathsf S_{12}(-2)$ & $3s_{51^{8}}+2s_{421^{7}}+19s_{41^{9}}+12s_{321^{8}}+29s_{31^{10}}+s_{2^{3}1^{7}}+14s_{2^{2}1^{9}}+14s_{21^{11}}+s_{1^{13}}$\\
\midrule
$14$ & $H^{13}$ & $(12,1)$ & $\mathsf S_{12}(-1)$ & $5s_{51^{9}}+3s_{421^{8}}+8s_{41^{10}}+s_{3^{2}1^{8}}+4s_{321^{9}}+3s_{31^{11}}+s_{2^{2}1^{10}}$\\
$14$ & $H^{15}$ & $(13,2)$ & $\mathsf S_{12}(-2)$ & $6s_{61^{8}}+6s_{521^{7}}+37s_{51^{9}}+3s_{431^{7}}+s_{42^{2}1^{6}}+34s_{421^{8}}+60s_{41^{10}}+12s_{3^{2}1^{8}}+5s_{32^{2}1^{7}}+46s_{321^{9}}+34s_{31^{11}}+5s_{2^{3}1^{8}}+18s_{2^{2}1^{10}}+5s_{21^{12}}$\\
$14$ & $H^{17}$ & $(14,3)$ & $\mathsf S_{12}(-3)$ & $s_{621^{6}}+14s_{61^{8}}+15s_{521^{7}}+72s_{51^{9}}+s_{4^{2}1^{6}}+9s_{431^{7}}+3s_{42^{2}1^{6}}+70s_{421^{8}}+113s_{41^{10}}+26s_{3^{2}1^{8}}+12s_{32^{2}1^{7}}+94s_{321^{9}}+69s_{31^{11}}+13s_{2^{3}1^{8}}+40s_{2^{2}1^{10}}+13s_{21^{12}}$\\
$14$ & $H^{17}$ & $(16,1)$ & $\mathsf S_{16}(-1)$ & $2s_{1^{14}}$\\
$14$ & $H^{17}$ & $(17,0)$ & $\mathsf S_{18}$ & $s_{2^{7}}$\\
\midrule
\end{longtable}}

\begin{proof}[Proof of Theorem E]
This is read off from the table, which is part of the computation of \S\ref{S7.1}. Note that $\dim V_{2^7}=429$ is the Catalan number $C_7$. The classes of type $(17,0)$ in $H^{17}(\bcM_{2,14})$ come from the term $s_{18}(u^{17}+v^{17})$ of $\HH_{\Sbar_{0,10}}$ in Theorem \ref{42}, i.e.\ from the Saito-Kurokawa lift $\chi_{10}$ of the cusp form of weight $18$.
\end{proof}


\subsection{Non-Tate classes in even degree}\label{S7.5}
Theorem~\ref{L2S} is a statement about Hodge structures, not merely Betti numbers, and the extra information is not vacuous.

\begin{theo}\label{59}
For $n\le20$, every even degree Hodge number of $\Mbar_{2,n}$ lies on the diagonal, i.e. $$h^{p,q}(\bcM_{2,n})=0\ \  \text{ if }\ p\ne q,\  p+q \text{ even},\ n\le 20.$$
For $n=21$, this fails: the only off-diagonal even degree Hodge pieces of $\bcM_{2,21}$ are $H^{23,1}$ and $H^{1,23}$, and as $\bS_{21}$-representations
\[
\ch_{21}H^{23,1}(\bcM_{2,21})=\ch_{21}H^{1,23}(\bcM_{2,21})=s_1\cdot h_2[e_{10}]=\sum_{k=0}^{10}s_{2^k1^{21-2k}}+\sum_{a\in\{1,3,5,7,9\}}s_{3\,2^a1^{18-2a}}.
\]
In particular $h^{23,1}(\bcM_{2,21})=h^{1,23}(\bcM_{2,21})=1939938=21\cdot\tfrac12\tbinom{20}{10}$, carried by $\Sym^2\mathsf S_{12}(-1)$.
\end{theo}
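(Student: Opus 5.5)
Since every Hodge number is read off from $\HH^{\bS}_{\bcM_{2,n}}$, the plan is to isolate the part of $\cL^{\bS}_2$ carrying off-diagonal monomials $u^pv^q$ with $p+q$ even, and to transport it through $\pl Z^{\langle-1\rangle}$ (Theorem \ref{A}). The plethysm acts on coefficients only by multiplication by polynomials in $x$ and by Adams operations $\psi^k$. By Corollary \ref{62}, the $s_{j,k}$ contribute only odd total degree. Hence off-diagonal even-degree monomials come from two sources: products of two odd non-Tate monomials, and Adams operations $\psi^2$ applied to such monomials. Both arise only through quadratic expressions in $\cE^{\bS}$, or in odd parts of local-system cohomology.

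Inspecting Theorem \ref{L2S} and Proposition \ref{EF}, I would locate these sources. They are the terms $\bE^2$ in $\cL^{\bS}_{\Gamma_1}$ and $\cL^{\bS}_\varnothing$ (through $-\frac12(\bE^2+p_2\pl\bE)$ in $\cF^{\bS}$), together with $p_2\pl(\bh(x)\bE)$ in $\cL^{\bS}_{\Gamma_1}$. Terms of $\cF^{\bS}$ involving $\HH_{\sA_2;\V_{a,b}}$ are linear and, by \eqref{46}, contribute only odd-degree non-Tate monomials together with Tate terms. The key product is $(u^{11}+v^{11})^2$ from $\mathsf S_{12}\otimes\mathsf S_{12}$, which yields $u^{22}+v^{22}+2x^{11}$. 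The monomials $u^{22}$ and $v^{22}$ appear first where $\bE$ has the $s_{\lambda'}$ coefficient involving $\Sym^{10}V$, that is, $|\lambda|\ge 10$ with $\lambda_1-\lambda_2=10$, so that two copies need degree at least $20$. Since $\bE$ enters multiplied by $\bh(x)$, $\bC$ and similar factors, the minimal total degree and twist should be computed carefully. I expect the lowest contribution to sit in degree $n=21$, with twist $x$, giving type $(23,1)$.

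I would then compute directly. For $n\le 20$, check that the coefficient of each off-diagonal even monomial in the degree-$n$ part vanishes. Cancellations between $\cL_{\Gamma_1}$ and the $\cA_{1,1}$-correction in $\cF$ should occur here, since $\bcM_{2,n}$ and $\bcM_{2,(0^+)^n}$ differ by strata whose $\cE^2$-contributions partly cancel; this check would be done by the implementation of \S\ref{S7.1}. For $n=21$, extract the coefficient of $u^{23}v$. The square of the weight-$12$ part of $\bE$ lives in $\frac12(\bE_{10}^2\pm p_2\pl\bE_{10})$, where $\bE_{10}$ is the coefficient $s_{1^{10}}$ times $-(u^{11}+v^{11})$, because $\lambda=(10)$ gives $\lambda'=1^{10}$. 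The $u^{22}$ coefficient of $\frac12(e_{10}^2+p_2\pl e_{10})$-type expressions is exactly $h_2[e_{10}]$. This should match $\Sym^2\mathsf S_{12}$, with the sign from $p_2\pl$ acting on $u^{11}$ giving $u^{22}$, which is consistent with Koszul signs. Multiplying by the lowest extra factor, namely $h_1$ together with a factor $x$ from $\bh(x)$ or an edge factor, gives $s_1\cdot h_2[e_{10}]$ in bidegree $(23,1)$. Finally, expand via Pieri and the known plethysm $h_2[e_{10}]=\sum_{k\text{ even}}s_{2^{k}1^{20-2k}}$, with sign conventions fixed by parity of $10$, and compute the dimension $21\cdot\frac12\binom{20}{10}$ from the induced-representation description.

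The main obstacle is the verification that everything cancels for $n\le20$ and that, at $n=21$, only $(23,1)$ survives with the stated coefficient. Contributions from $\Gamma_1$, $\Gamma_0$ (through $\partial\bE$ and $x\bE$, which are linear and hence harmless), $\varnothing$, and the plethysm by $Z^{\langle-1\rangle}$ all have to be tracked. I would handle this by computer-exact extraction as in \S\ref{S7.1}, complemented by the degree argument above, which bounds where $u^{22}$ can first appear.
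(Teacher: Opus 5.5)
Your strategy is the paper's. You confine the off-diagonal even-degree monomials to the terms of Theorem~\ref{L2S} that are quadratic in $\cE^{\bS}$, since the linear terms, including the $\sA_2$-part of $\cF^{\bS}$, carry only off-diagonal monomials of odd total degree. You note that such terms first occur in degree $20$, and then evaluate; the paper also settles $n\le 21$ by evaluation. Two points in your write-up need correcting, however, and the second means your hand derivation of the $n=21$ answer does not work as written.

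First, in $\cH^{\bS}_2=\cL^{\bS}_2\pl Z^{\langle-1\rangle}$ the coefficients of $\cL^{\bS}_2$ are left alone. The Adams operations act only on the coefficients of $Z^{\langle-1\rangle}$, which lie in $\QQ[x]$. This is what justifies your localization; if $\psi^2$ acted on the coefficients of $\cL^{\bS}_2$, the linear odd terms would also contribute. Since $Z^{\langle-1\rangle}=h_1+(\text{degree}\ge3)$, the degree-$n$ part of $\cH^{\bS}_2$ is that of $\cL^{\bS}_2$ plus terms coming from degrees $\le n-2$ of $\cL^{\bS}_2$, multiplied by polynomials in $x$. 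So for $n\le 21$ the even off-diagonal parts of $\cH^{\bS}_2$ and $\cL^{\bS}_2$ agree, and nothing has to be tracked through $Z^{\langle-1\rangle}$. The paper packages this as $\cH^{\bS}_2\equiv\cL^{\bS}_2+xG\,\partial\cL^{\bS}_2$ modulo $(u^{24},v^2)$.

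Second, the signs. The degree-$10$ part of $\cE^{\bS}$ contains $-e_{10}u^{11}$, and $\psi^2(-u^{11})=-u^{22}$. This \emph{is} the Koszul sign: the swap acts by $-1$ on the square of an odd class. So $(\cE^{\bS})^2\ni e_{10}^2u^{22}$, while $p_2\pl\cE^{\bS}\ni-(p_2\pl e_{10})u^{22}$. Hence the $u^{22}$-coefficient of $\frac12\big((\cE^{\bS})^2+p_2\pl\cE^{\bS}\big)$ is $e_2[e_{10}]$, not $h_2[e_{10}]$, and the same holds for the degree-$20$ part of $\cL^{\bS}_{\Gamma_1}$.

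In degree $20$, the contribution $+e_2[e_{10}]$ from $\cL^{\bS}_{\Gamma_1}$ cancels the contribution $-e_2[e_{10}]$ from $\cL^{\bS}_\varnothing=\bh(1)\bh(x)\cF^{\bS}$. This cancellation is internal to $\cL^{\bS}_2$ and reflects $\cM_2=\sA_2\smallsetminus\sA_{1,1}$; it is not a comparison between $\bcM_{2,n}$ and $\bcM_{2,(0^+)^n}$.

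In degree $21$ one multiplies by the degree-one parts: $(2x+1)h_1$ for $\bh(x)^2C^{\bS}$, $h_1$ for $D^{\bS}$, and $(x+1)h_1$ for $\bh(1)\bh(x)$. So the $u^{22}$-terms cancel again, and the term $p_2\pl(\bh(x)\cE^{\bS})\,D^{\bS}$ never reaches $u^{23}v$. The coefficient of $u^{23}v$ is
\[
h_1e_{10}^2-h_1\,e_2[e_{10}]=h_1\,h_2[e_{10}].
\]
Thus the twist $x$ comes from $\bh(x)$, never from an edge factor. Moreover, $h_2[e_{10}]$ appears as the difference of the $\Gamma_1$ and smooth-stratum contributions, because $\bh(x)^2$ supplies $2xh_1$ while $\bh(x)$ supplies only $xh_1$. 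It does not come from a single symmetric square with a plus sign.

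With this bookkeeping the whole theorem is a short hand computation; the cases $n\le 19$ are empty by your degree bound.
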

\begin{proof}
By Corollary \ref{62}, all even degree pieces of $\HH^{\bS}_{\bcM_{2,n}}$ are unconditionally determined by Theorem \ref{L2S} and Theorem \ref{A}, and the first two statements follow by computing them for $n\le 21$, which we did both with the exact implementation (for $n\le 18$) and with the modular one.

The structure of the result can be seen as follows. By \eqref{45} and \eqref{46}, all monomials $u^pv^q$ with $p\ne q$ in $\cE^{\bS}$ and in the $\sA_2$-part of $\cF^{\bS}$ have $p+q$ odd. All terms of Theorem \ref{L2S} are linear in these series, with coefficients which are polynomials in $x$, except for the terms which are quadratic in $\cE^{\bS}$, namely $\cE^2$ and $p_2\pl(\bh(x)\cE)$ in $\cL^{\bS}_{\Gamma_1}$ and $(\cE^{\bS})^2$ and $p_2\pl\cE^{\bS}$ in $\cF^{\bS}$; and the plethysm with $Z^{\langle-1\rangle}$ preserves this property. Hence a monomial $u^pv^q$ with $p\ne q$ and $p+q$ even can only arise from these quadratic terms, in which two classes of type $(k-1,0)$ or $(0,k-1)$ coming from $\mathsf S_k$ are multiplied, or such a class is raised to the second Adams power. For $n\le 21$, only $k=12$ contributes, and the classes of type $(22,0)$ in $\mathsf S_{12}\otimes\mathsf S_{12}$ lie in $\Sym^2\mathsf S_{12}$, since $H^{11,0}$ of $\mathsf S_{12}$ is one-dimensional.

To isolate $u^{23}v$, we may also compute in the quotient $\QQ[u,v]/(u^{24},v^2)$, which is compatible with products and with the Adams operations. In this quotient $x^2=0$ and $p_k\pl x=0$ for $k\ge2$, so $Z\equiv h_1-xG$ with $G=\sum_{k\ge3}h_k$, $Z^{\langle-1\rangle}\equiv h_1+xG$, and $p_k\pl Z^{\langle-1\rangle}\equiv p_k$ for $k\ge2$. Therefore
\[
\cH^{\bS}_2\equiv\cL^{\bS}_2+x\,G\,\partial\cL^{\bS}_2\pmod{(u^{24},v^2)},
\]
and evaluating Theorem \ref{L2S} in this quotient up to degree $21$ gives the stated representation. Finally, $h_2[h_{10}]=\sum_{k\ \mathrm{even}}s_{(20-k,k)}$, so $h_2[e_{10}]=\omega(h_2[h_{10}])=\sum_{k\ \mathrm{even}}s_{2^k1^{20-2k}}$ since $e_{10}$ has even degree, and the Pieri rule gives the Schur expansion of $s_1\cdot h_2[e_{10}]$. Its dimension is $21\cdot\frac12\binom{20}{10}$.
\end{proof}

\begin{coro}\label{61}
$H^{24}(\Mbar_{2,21})$ is not spanned by Hodge classes, hence not by algebraic cycle classes, and in particular
$$RH^{24}(\Mbar_{2,21})\subsetneq H^{24}_{\mathrm{alg}}(\bcM_{2,21})\subsetneq H^{24}(\Mbar_{2,21})$$
where $RH^{24}(\bcM_{2,21})$ denotes the tautological subspace and $H^{24}_{\mathrm{alg}}(\bcM_{2,21})$ denotes the image of the cycle class map from the Chow group $A^{12}(\bcM_{2,21})$.
The same holds for $H^{24}(\bcM_{2,n})$ for every $n\ge 21$.
\end{coro}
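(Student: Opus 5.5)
The plan is to deduce everything from Theorem \ref{59} together with standard facts about Hodge classes. First, $\bcM_{2,21}$ is a smooth proper Deligne--Mumford stack with projective coarse space, so $H^{24}(\bcM_{2,21})$ carries a pure Hodge structure of weight $24$. Theorem \ref{59} gives $H^{23,1}(\bcM_{2,21})\neq0$, of dimension $1939938$. The image of the cycle class map $A^{12}(\bcM_{2,21})_\QQ\to H^{24}(\bcM_{2,21})$ consists of Hodge classes, so it lies in $H^{12,12}$. Since $H^{23,1}\oplus H^{1,23}$ is a nonzero complementary summand, neither the space of Hodge classes nor $H^{24}_{\mathrm{alg}}$ can be all of $H^{24}$. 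This gives the second strict inclusion.

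The inclusion $RH^{24}\subseteq H^{24}_{\mathrm{alg}}$ is automatic, because tautological classes are cycle classes of boundary strata decorated with $\psi$ and $\kappa$ classes. For strictness I will invoke van Zelm's result \cite{vZ}. He shows that the bielliptic cycle class on $\bcM_{2,20}$ (the Graber--Pandharipande class) is a non-tautological algebraic class of codimension $12$, and that its analogues, or its pullbacks under forgetful maps, stay non-tautological on $\bcM_{2,n}$ for $n\ge20$. The step I expect to need the most care is getting the codimension right. On $\bcM_{2,20}$ the relevant class lives in $H^{22}$. So I will either cite van Zelm's non-tautological bielliptic classes directly in $H^{24}(\bcM_{2,21})$, or pull back the class from $\bcM_{2,20}$ and multiply by a tautological divisor such as $\psi_{21}$ so that it remains non-tautological. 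The cleanest route is simply to quote the precise statement of \cite{vZ} for $\bcM_{2,21}$ in codimension $12$.

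For $n\ge21$, let $\pi:\bcM_{2,n}\to\bcM_{2,21}$ be a forgetful morphism. It is surjective and proper between smooth proper stacks, so $\pi^*$ is injective on rational cohomology; concretely, $\pi_*(\pi^*\xi\cdot\psi)$ recovers a nonzero multiple of $\xi$ for a suitable class $\psi$ of relative top degree. Moreover $\pi^*$ is a morphism of Hodge structures, so $H^{23,1}(\bcM_{2,n})\ne0$ and the second strict inclusion persists. $\pi^*$ also preserves algebraic classes, and pullbacks of tautological classes are tautological. To see that the first inclusion persists, suppose $\pi^*\eta$ were tautological for a non-tautological algebraic $\eta$. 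Pushing forward after multiplying by a suitable product of $\psi$ classes would then show that $\eta$ is tautological, since the tautological rings are closed under forgetful pushforward. This contradiction finishes the proof.
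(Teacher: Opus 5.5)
Your proposal is correct and follows the paper's proof. The second inclusion comes from Theorem \ref{59} together with the fact that algebraic classes lie in $H^{12,12}$; the first comes from van Zelm's non-tautological bielliptic cycle (the paper uses $\overline{\mathcal B}_{2,1,20}$, the bielliptic locus with one fixed marked point and ten conjugate pairs, of dimension $12$ in $\bcM_{2,21}$); and the case $n\ge21$ follows from injectivity of forgetful pullbacks plus the dilaton pushforward $\pi'_*(\psi_{m+1}\pi'^*\beta)=(2+m)\beta$. One correction: the Graber--Pandharipande class on $\bcM_{2,20}$ has codimension $11$, not $12$, and your fallback $\psi_{21}\cdot\pi^*[\overline{\mathcal B}_{2,0,20}]$ is also a valid non-tautological algebraic class in $H^{24}(\bcM_{2,21})$ by the same dilaton argument.
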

\begin{proof}
Let $\overline{\mathcal B}_{2,1,20}\subset \bcM_{2,21}$ be the closure of the locus of bielliptic curves of genus $2$ with one marked point fixed by the bielliptic involution and $10$ pairs of marked points exchanged by it. Its dimension is $2+10=12$, so its class lies in $H^{24}(\bcM_{2,21})$, and it is not tautological by \cite[Theorem 1]{vZ}. Hence $RH^{24}(\bcM_{2,21})\ne H^{24}_{\mathrm{alg}}(\bcM_{2,21})$.
All algebraic classes lie in $H^{12,12}(\bcM_{2,21})$, but by Theorem~\ref{59}, the space $H^{24}(\bcM_{2,21})$ is not contained in $H^{12,12}(\bcM_{2,21})$.

For $n\ge 21$, the pullback by the forgetful morphism $\pi:\bcM_{2,n}\to\bcM_{2,21}$ is injective on cohomology because $\pi$ is surjective, and it preserves the Hodge types and the tautological subspaces. Hence $H^{24}(\bcM_{2,n})$ has classes of type $(23,1)$. Moreover $\pi^*[\overline{\mathcal B}_{2,1,20}]$ is not tautological: if $\pi':\bcM_{2,m+1}\to\bcM_{2,m}$ forgets the last marked point, then $\pi'_*(\psi_{m+1}\cdot\pi'^*\beta)=(2+m)\beta$ by the dilaton equation, so $\pi'^*\beta$ is not tautological if $\beta$ is not, because tautological rings are closed under products and $\pi'_*$; now factor $\pi$ into such morphisms.
\end{proof}


For $21\le n\le 30$, the off-diagonal even degree Hodge numbers are determined by our computation. Classes of types $(k,22+k)$ come from $\Sym^2\mathsf S_{12}(-k)$, and from $n=25$ on there are also classes from $\mathsf S_{12}\otimes\mathsf S_{16}$, of types $(1,27)$ and $(12,16)$ and their twists, and from $n=27$ on also from $\mathsf S_{12}\otimes\mathsf S_{18}$, for instance of type $(1,29)$. The following table lists $h^{p,q}(\bcM_{2,n})$ for $p<q$, $p+q$ even and $21\le n\le 26$; by Hodge symmetry $h^{q,p}=h^{p,q}$.

{\small
\begin{longtable}{@{}l p{0.88\textwidth}@{}}
\toprule
$n$ & off-diagonal even degree Hodge numbers with $p<q$\\
\midrule
$21$ & $h^{1,23}=1939938$\\
$22$ & $h^{1,23}=64017954$, $h^{2,24}=64017954$\\
$23$ & $h^{1,23}=1145210066$, $h^{2,24}=6216854644$, $h^{3,25}=1145210066$\\
$24$ & $h^{1,23}=14724129420$, $h^{2,24}=215953898160$, $h^{3,25}=215953898160$, $h^{4,26}=14724129420$\\
$25$ & $h^{1,23}=152149337340$, $h^{1,27}=49031400$, $h^{2,24}=4672457069280$, $h^{3,25}=14651830169130$, $h^{4,26}=4672457069280$, $h^{5,27}=152149337340$, $h^{12,16}=49031400$\\
$26$ & $h^{1,23}=1339895777220$, $h^{1,27}=1912224600$, $h^{2,24}=75672209132520$, $h^{2,28}=1912224600$, $h^{3,25}=535435604293590$, $h^{4,26}=535435604293590$, $h^{5,27}=75672209132520$, $h^{6,28}=1339895777220$, $h^{12,16}=1912224600$, $h^{13,17}=1912224600$\\
\\ \bottomrule
\end{longtable}}

\begin{prop}\label{n25}
As $\bS_{25}$-representations, $H^{1,27}(\bcM_{2,25})\cong H^{12,16}(\bcM_{2,25})$, with Frobenius characteristic $s_1e_{10}e_{14}$, of dimension $25\binom{24}{10}=49031400$.
\end{prop}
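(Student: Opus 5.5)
We follow the proof of Theorem \ref{59} and isolate the monomials $u^{1}v^{27}$ and $u^{12}v^{16}$. As explained there, a monomial $u^pv^q$ with $p\ne q$ and $p+q$ even can only come from the terms quadratic in $\cE^{\bS}$: $(\cE^{\bS})^2$ and $p_2\pl\cE^{\bS}$ inside $\cF^{\bS}$, and $\cE^2$ and $p_2\pl(\bh(x)\cE)$ in $\cL^{\bS}_{\Gamma_1}$. For total degree $28$, the relevant classes come from two different cusp forms $\mathsf S_{12}$ and $\mathsf S_{16}$, giving $\mathsf S_{12}\otimes\mathsf S_{16}$, of Hodge types $(26,0),(11,15),(15,11),(0,26)$. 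We will first check that $\Sym^2\mathsf S_{14}$ does not occur (since $s_{14}=0$), and that for $n\le 25$ no other pair of cusp forms with weights summing to $28$ contributes. Then $H^{1,27}$ and $H^{12,16}$ are the $(-1)$-twists of the types $(0,26)$ and $(11,15)$. The Adams-power terms only produce $\psi^2$ of a single form, which is of type $\Sym^2$ or $\Lambda^2$ of one $\mathsf S_k$, so they do not contribute to the mixed product. Consequently only the products $\cE^{\bS}\cdot\cE^{\bS}$ matter, and in these the two tensor factors of $\mathsf S_{12}\otimes\mathsf S_{16}$ appear symmetrically. This already explains the isomorphism $H^{1,27}\cong H^{12,16}$: the monomials $v^{11}\cdot v^{15}$ and $u^{11}\cdot v^{15}$ arise with identical symmetric-function coefficients, because in $\cE^{\bS}$ the classes $u^{k-1}$ and $v^{k-1}$ always come together in $u^{k-1}+v^{k-1}$ (Proposition \ref{40}).

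Next we compute the coefficient. In $\cE^{\bS}$, the term $-s_{12}(u^{11}+v^{11})$ multiplies $s_{(\lambda_1,\lambda_2)'}$ with $\lambda_1-\lambda_2=10$ and twist $x^{\lambda_2}$. The term with $\mathsf S_{16}$ multiplies $s_{(\lambda_1,\lambda_2)'}$ with $\lambda_1-\lambda_2=14$. The lowest pieces, with $\lambda_2=0$, are $-v^{11}e_{10}$ and $-v^{15}e_{14}$. Their product in $\tfrac12(\cE^{\bS})^2$ (the cross term, taken twice) gives $v^{26}e_{10}e_{14}$, up to sign, in degree $24$.

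To reach degree $25$ with one extra $x$, we work, as in the proof of Theorem \ref{59}, in a truncated coefficient ring such as $\QQ[u,v]/(u^2,v^{28})$. In this ring $x^2=0$, and $\cH^{\bS}_2\equiv\cL^{\bS}_2+xG\,\partial\cL^{\bS}_2$. We collect the coefficient of $uv^{27}$ from three sources:
\begin{itemize}
\item the factor $\bh(1)\bh(x)$ in $\cL^{\bS}_\varnothing$;
\item the quadratic term in $\cL^{\bS}_{\Gamma_1}$;
\item the $\sA_{1,1}$ subtraction in $\cF^{\bS}$;
\end{itemize}
and we also collect the $x$-twisted pieces with $\lambda_2=1$. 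Since $\cL_{\Gamma_1}$ carries $\bh(x)^2\bC$ and the $\sA_{1,1}$ term carries $-\tfrac12$ times $\bh(1)\bh(x)$, the contributions at order $x^0$ should partly cancel. We expect that what remains in degree $25$ is $x\,h_1e_{10}e_{14}$, i.e. $s_1e_{10}e_{14}$.

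The main obstacle is this bookkeeping: tracking every degree-$25$, $x^1$ contribution (from $\bh(x)$, $\bC$, $\bD$, $G\,\partial$, and the $\lambda_2=1$ twists), and verifying that the net coefficient is exactly $s_1e_{10}e_{14}$ with sign consistent with odd-plus-odd weight. As a safeguard we would confirm the result with the computer implementation of \S\ref{S7.1}. The dimension then follows directly: $\dim\Ind(\mathrm{sgn}_{10}\boxtimes\mathrm{sgn}_{14})=\binom{24}{10}$, and multiplying by $s_1$ gives a factor $25$, so the total is $25\binom{24}{10}=49031400$.
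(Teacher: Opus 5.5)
Your reduction is sound as far as it goes, and it does give the isomorphism. Off-diagonal even-degree monomials come only from the terms quadratic in $\cE^{\bS}$. Since $s_{14}=0$, the only pair of weights summing to $28$ is $(12,16)$. The $p_2\pl$ terms only produce $u^{2k-2},v^{2k-2}$. Finally, $u^{k-1}$ and $v^{k-1}$ always occur together in $\cE^{\bS}$, while every later operation has coefficients in $\QQ[x]$: products, and $\pl Z^{\langle-1\rangle}$, which is $\QQ[u,v]$-linear in its first argument. Together these give $H^{1,27}\cong H^{12,16}$.

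The gap is the Frobenius characteristic $s_1e_{10}e_{14}$, which is the actual content of the statement. You only write that you ``expect'' $x\,h_1e_{10}e_{14}$ and defer to the implementation of \S\ref{S7.1}. That fallback is exactly the paper's proof, which just reads off Schur coefficients from the machine computation. So as a self-contained argument your proposal proves the isomorphism but not the characteristic.

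Two smaller slips. Your first and third ``sources'' are the same term, since the $\sA_{1,1}$ subtraction enters $\cL^{\bS}_2$ only through $\bh(1)\bh(x)\cF^{\bS}$. Also, the truncation $\QQ[u,v]/(u^2,v^{28})$ kills $u^{12}v^{16}$, so it can only see $H^{1,27}$.

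The missing bookkeeping is short. Write $\cE^{\bS}=(u^{11}+v^{11})E_{12}+(u^{15}+v^{15})E_{16}+\cdots$, where $E_{12}=-\sum_{j\ge0}x^js_{(10+j,j)'}\equiv-e_{10}$ and $E_{16}\equiv-e_{14}$ modulo $x$. The cross terms of $-\tfrac12\bh(1)\bh(x)(\cE^{\bS})^2$ in $\cL^{\bS}_\varnothing$ and of $\tfrac12\bh(x)^2(\cE^{\bS})^2C^{\bS}$ in $\cL^{\bS}_{\Gamma_1}$ give $(u^{11}+v^{11})(u^{15}+v^{15})M$, with $M=E_{12}E_{16}\big(\bh(x)^2C^{\bS}-\bh(1)\bh(x)\big)$. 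Modulo $x^2$ we have $\bh(x)\equiv1+xh_1$. Proposition \ref{LM} together with $\bh(1)\be(-1)=1$ gives $C^{\bS}\equiv\bh(1)+x\big(\bh(1)^2-\bh(1)-h_1\bh(1)\big)$. Hence
\[
M\equiv x\,e_{10}e_{14}\big(\bh(1)^2-\bh(1)\big)\pmod{x^2},
\]
so the cancellation at order $x^0$ is complete, not partial.

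Every coefficient of $Z-h_1$ is divisible by $x$, so $Z^{\langle-1\rangle}\equiv h_1$ modulo $x$. Since $M$ is already divisible by $x$, this gives $M\pl Z^{\langle-1\rangle}\equiv M\pmod{x^2}$. In particular the $D^{\bS}$ terms, the correction $xG\,\partial$ and the $\lambda_2\ge1$ twists you planned to track all drop out. The degree-one part of $\bh(1)^2-\bh(1)$ is $h_1$. Therefore the coefficients of $uv^{27}=x\cdot v^{26}$ and of $u^{12}v^{16}=x\cdot u^{11}v^{15}$ in $\HH^{\bS}_{\bcM_{2,25}}$ both equal $h_1e_{10}e_{14}=s_1e_{10}e_{14}$, with sign $(-1)^{28}=1$.

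Completed this way, your route proves both claims at once without a computer, and it is genuinely different from the paper's. It explains the shape $s_1e_{10}e_{14}$, which the paper only ``suggests'' via $\omega_{12}$ and $\omega_{16}$, and it works for every $n$. For example, at $n=26$ it gives $e_{10}e_{14}(h_2+h_1^2)$, of dimension $1912224600$, which matches the table.
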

\begin{proof}
This is part of the computation of \S\ref{S7.1}, comparing Schur coefficients.
\end{proof}

As for $n=21$, the shape $s_1e_{10}e_{14}$ suggests products of the classes $\omega_{12}\in H^{11,0}(\bcM_{1,11})$ and $\omega_{16}\in H^{15,0}(\bcM_{1,15})$, which spans the sign representation \cite{Get98}, on the two elliptic components of curves in $\Delta_1$.

\begin{rema}
By Theorem \ref{L2S} and Corollary \ref{62}, we can determine all the $\bS_n$-representations $H^{p,p}(\bcM_{2,(0^+)^n})$ and $H^{p,p}(\bcM_{2,n})$ explicitly. Hence verifying the Hodge conjecture for $\bcM_{2,(0^+)^n}$, or equivalently for $\bcM_{2,n}$ by Proposition \ref{10}, now becomes an explicit finite count of cycles, which may be organized by isotypic components.
\end{rema}


\bigskip
\begin{thebibliography}{99}

\bibitem{AC} E.~Arbarello, M.~Cornalba, \emph{The Picard groups of the moduli spaces of curves}, Topology \textbf{26} (1987), 153--171.

\bibitem{Ber} J.~Bergstr\"om, \emph{$\bS_n$-equivariant Euler characteristics of moduli spaces of stable $n$-pointed curves of genus two}, table \texttt{eM2nbar.txt}.

\bibitem{Ber2} J.~Bergstr\"om,
\emph{Cohomology of moduli spaces of curves of genus two via point counts},
J. reine angew. Math. \textbf{622} (2008), 155--187.

\bibitem{CLP} S.~Canning, H.~Larson, S.~Payne, \emph{The eleventh cohomology group of $\Mbar_{g,n}$}, Forum Math. Sigma \textbf{11} (2023), e62.

\bibitem{Fulton} W.~Fulton, \emph{Intersection theory}, second edition, Springer, 1998.

\bibitem{FuHa} W.~Fulton, J.~Harris, \emph{Representation theory, a first course}, Graduate Texts in Mathematics 129, Springer, 2004.

\bibitem{Get95a} E.~Getzler, \emph{Operads and moduli spaces of genus $0$ Riemann surfaces}, in: The moduli space of curves (Texel Island, 1994), Progr. Math. \textbf{129}, Birkh\"auser, 1995, 199--230.

\bibitem{Get95b} E.~Getzler, \emph{Mixed Hodge structures of configuration spaces}, arXiv:alg-geom/9510018.

\bibitem{Get98} E.~Getzler, \emph{The semi-classical approximation for modular operads}, Comm. Math. Phys. \textbf{194} (1998), 481--492.

\bibitem{Get99} E.~Getzler, \emph{Euler characteristics of local systems on $\cM_2$}, Compositio Math. \textbf{132} (2002), 121--135; arXiv:math/9909171.

\bibitem{GK} E.~Getzler, M.~Kapranov, \emph{Modular operads}, Compositio Math. \textbf{110} (1998), 65--126.

\bibitem{GeP} E.~Getzler, R.~Pandharipande, \emph{The Betti numbers of $\Mbar_{0,n}(r,d)$}, J. Algebraic Geom. \textbf{15} (2006), 709--732.

\bibitem{GrP} T.~Graber, R.~Pandharipande, \emph{Constructions of nontautological classes on moduli spaces of curves}, Michigan Math. J. \textbf{51} (2003), 93--109.

\bibitem{GrHa} P.~Griffiths, J.~Harris, \emph{Principles of algebraic geometry}, Wiley and Sons, 1978.

\bibitem{Hassett} B.~Hassett, \emph{Moduli spaces of weighted pointed stable curves}, Adv. Math. \textbf{173} (2003), 316--352.

\bibitem{KSY} S.~Kannan, S.~Serpente, C.~Yun, \emph{Equivariant Hodge polynomials of heavy/light moduli spaces}, Forum Math. Sigma \textbf{12} (2024), e34.

\bibitem{Keel} S.~Keel, \emph{Intersection theory of moduli space of stable $n$-pointed curves of genus zero}, Trans. Amer. Math. Soc. \textbf{330} (1992), 545--574.

\bibitem{Ki} Y.-H.~Kiem, \emph{Hodge-Deligne and Poincar\'e polynomials of $\Mbar_{1,n}$}, arXiv:2609.12507.

\bibitem{KT} K.~Koike, I.~Terada, \emph{Young-diagrammatic methods for the representation theory of the classical groups of type $B_n$, $C_n$, $D_n$}, J. Algebra \textbf{107} (1987), 466--511.

\bibitem{LM} A.~Losev, Y.~Manin, \emph{New moduli spaces of pointed curves and pencils of flat connections}, Michigan Math. J. \textbf{48} (2000), 443--472.

\bibitem{Mac} I.~G.~Macdonald, \emph{Symmetric functions and Hall polynomials}, second edition, Oxford University Press, 1995.

\bibitem{Manin} Y.~Manin, \emph{Frobenius manifolds, quantum cohomology, and moduli spaces}, AMS Colloquium Publications 47, Providence, RI, 1999.

\bibitem{Pet0} D.~Petersen, \emph{The structure of the tautological ring in genus one}, Duke Math. J. \textbf{163} (2014), 777--793.

\bibitem{Pet} D.~Petersen, \emph{Cohomology of local systems on the moduli of principally polarized abelian surfaces}, Pacific J. Math. \textbf{275} (2015), 39--61.

\bibitem{Pet2} D.~Petersen, \emph{Tautological rings of spaces of pointed genus two curves of compact type}, Compos. Math. \textbf{152} (2016), 1398--1420.

\bibitem{PT} D.~Petersen, O.~Tommasi, \emph{The Gorenstein conjecture fails for the tautological ring of $\Mbar_{2,n}$}, Invent. Math. \textbf{194} (2013), 509--524.

\bibitem{vZ} J.~van Zelm, \emph{Nontautological bielliptic cycles}, Pacific J. Math. \textbf{294} (2018), 495--504.

\end{thebibliography}
\end{document}